\newcommand{\numberset}{\mathbb}
\newcommand{\N}{\numberset{N}}
\newcommand{\R}{\numberset{R}}
\newcommand{\B}{\numberset{B}}
\newcommand{\Pk}{\numberset{P}}
\renewcommand{\epsilon}{\varepsilon}
\renewcommand{\theta}{\vartheta}
\renewcommand{\rho}{\varrho}
\renewcommand{\phi}{\varphi}
\newcommand{\uu}{\mathbf{u}}
\newcommand{\vv}{\mathbf{v}}
\newcommand{\ww}{\mathbf{w}}
\newcommand{\ff}{\mathbf{f}}
\newcommand{\dd}{{\rm div}}
\newcommand{\gr}{\nabla}
\newcommand{\Gr}{\boldsymbol{\nabla}}
\newcommand{\cc}{{{\rm curl}}}
\newcommand{\CC}{\boldsymbol{{\rm curl}}}
\newcommand{\dl}{\boldsymbol{\Delta}}
\newcommand{\VV}{\mathbf{V}}
\newcommand{\ZZ}{\mathbf{Z}}
\newcommand{\cconv}{c_{\rm conv}}
\newcommand{\cskew}{c_{\rm skew}}
\newcommand{\crot}{c_{\rm rot}}
\newcommand{\cconvh}{c_{{\rm conv}, h}}
\newcommand{\cskewh}{c_{{\rm skew}, h}}
\newcommand{\croth}{c_{{\rm rot}, h}}
\def\P0{{\Pi^{0, E}_k}}
\def\PN{{\Pi^{\nabla, E}_k}}
\def\PNdue{{\Pi^{\nabla^2, E}_{k+1}}}
\def\PP0{{\boldsymbol{\Pi}^{0, E}_{k-1}}}
\lbrace\begin{array}{@{}l@{}}}%
\theoremstyle{definition}
\theoremstyle{remark}
\newtheorem{remark}{Remark}[section]
\theoremstyle{remark}
\newtheorem{test}{Test}[section]
\theoremstyle{plain}
\newtheorem{theorem}{Theorem}[section]
\newtheorem{proposition}{Proposition}[section]
\newtheorem{corollario}{Corollary}[section]
\newtheorem{lemma}{Lemma}[section]
\author[1]{L. Beir\~ao da Veiga \thanks{lourenco.beirao@unimib.it}}
\author[2,3]{D. Mora \thanks{dmora@ubiobio.cl}}
\author[1]{G. Vacca \thanks{giuseppe.vacca@unimib.it}}
\affil[1]{Dipartimento di Matematica e Applicazioni,
Universit\`a degli Studi di Milano Bicocca,
Via Roberto Cozzi 55 - 20125 Milano, Italy}  
\affil[2]{Departamento de Matem{\'a}tica, 
Universidad del B{\'i}o-B{\'i}o,
Casilla 5-C, Concepci{\'o}n, Chile} 
\affil[3]{CI$^2$MA, Universidad de Concepci\'on, Concepci\'on, Chile
}
\title{ \textbf{The Stokes complex for Virtual
Elements with application to Navier--Stokes flows}}
\date{\today}
\begin{document}

\maketitle
\begin{abstract}
In the present paper, we investigate the underlying
Stokes complex structure of the Virtual Element Method
for Stokes and Navier--Stokes introduced in previous
papers by the same authors, restricting our attention to the two dimensional case.
We introduce a Virtual Element space $\Phi_h \subset H^2(\Omega)$ and
prove that the triad $\{\Phi_h, \VV_h, Q_h\}$
(with $\VV_h$ and $Q_h$ denoting the discrete
velocity and pressure spaces) is an exact Stokes complex.
Furthermore, we show the computability of the associated
differential operators in terms of the adopted degrees
of freedom and explore also a different discretization
of the convective trilinear form. The theoretical
findings are supported by numerical tests.
\end{abstract}

% -------------------------------------------------------------------
\section{Introduction}
\label{sec:1}
% -------------------------------------------------------------------

The Virtual Element Method (VEM) was introduced in \cite{volley,VEM-hitchhikers} as a generalization of the Finite Element Method that allows to use general polygonal and polyhedral meshes. The VEM enjoyed a good success in this recent years both in the mathematics and in the engineering communities, a very brief list of papers being 
\cite{
GTP14,
hourglass,
Berrone-VEM,
Benedetto-VEM-2,
2016stability,
Mora-Rivera-Rodolfo:2015,
Mora-Rivera-Velazquez:2018,
wriggers,
wriggers-2,
Bertoluzza,
brenner:2017,
fumagalli,
ill_cond_VEM3D,
vaccahyper,
Artioli-Sacco},
while for the specific framework of incompressible flows we refer to \cite{Antonietti-BeiraodaVeiga-Mora-Verani:20XX, Stokes:divfree, Bdv-Lovadina-Vacca:2018,
Gatica-1, Gatica-2, Stokes:nonconforme, Cinese-nonconforme}.
Virtual Elements are part of a wider community of
polytopal methods, that include other technologies
(some representative papers being 
\cite{
cangiani-georgoulis-houston:2014,
Antonietti-et-al:2017,
floater-gillette-sukumar:2014,
talischi.paulino-pereira-menezes:2010,
dipietro-krell:2018,
botti-dipetro-droniou:2018}).

It was soon recognized that the more general construction
of VEM, that is not limited to polynomial functions on the
elements, may allow for additional interesting features
in additional to polytopal meshing. An example can be
found in \cite{Stokes:divfree,Bdv-Lovadina-Vacca:2018}
where the authors developed a (conforming) Virtual Element
Method for the Stokes and Navier--Stokes problems that
guarantees a divergence free velocity, a property that yields
advantages with respect to standard inf-sup stable schemes
(see for instance \cite{Linke3}). And, most importantly,
the proposed approach fitted quite naturally in the virtual
element setting, so that the ensuing element is not particularly
complicated to code or to handle.

Our aim is to further develop the idea in \cite{Stokes:divfree,Bdv-Lovadina-Vacca:2018},
also in order to get a deeper understanding of the underlying structure.
In \cite{falk-neilan:2013} the term ``Stokes exact complex'' was introduced;
in that paper the authors underline that, if a given velocity/pressure
FE scheme is associated to a discrete Stokes exact complex, than not only
the existence of an unique solution is guaranteed, but also the divergence-free
character of the discrete velocity. In addition, this allows to construct
an equivalent $\cc$ formulation of the problem in a potential-like variable.
This matter is one of  interest in the FEM community, see for instance
\cite{Linke1, Linke3}, also due to the difficulty in deriving exact Stokes
complexes for Finite Elements, that often yield quite ``cumbersome'' schemes. 

In the present paper, we unveil the underlying 2D Stokes complex
structure of the VEM in \cite{Stokes:divfree,Bdv-Lovadina-Vacca:2018}
by introducing a Virtual Element space $\Phi_h \subset H^2(\Omega)$ and proving that the triad $\{\Phi_h, \VV_h, Q_h\}$  (with $\VV_h$ and $Q_h$ velocity and pressure spaces of \cite{Bdv-Lovadina-Vacca:2018}) is an exact Stokes complex. Furthermore, we show the computability of the associated differential operators in terms of the adopted degrees of freedom (a key aspect in VEM discretizations) and we explore also a different discretization of the convective trilinear form.
As a byproduct of the above exact-sequence construction, we obtain a discrete $\cc$ formulation of the Navier--Stokes problem (set in the smaller space $\Phi_h$) that yields the same velocity as the original method (while the pressure needs to be recovered by solving a global rectangular system). For completeness, we also briefly present and compare a stream-function formulation approach, that is based on a direct discretization (with $C^1$ Virtual Elements) of the continuous stream function formulation of the problem. Some numerical tests are developed at the end of the paper, in order to show the performance of the methods, also comparing aspects such as condition number and size of the linear system. 
We note that a related study was developed in \cite{Antonietti-BeiraodaVeiga-Mora-Verani:20XX}, but only for the lowest order case without enhancements (that is, suitable for Stokes but not for Navier--Stokes).

The paper is organized as follows. In Section \ref{sec:2} we review the Navier--Stokes problem in strong and variational form, together with some basic theoretical facts. In Section \ref{sec:4} (after introducing some preliminaries and definitions in Section \ref{sec:3}) we review the Virtual scheme in \cite{Bdv-Lovadina-Vacca:2018}, propose a third option for the discretization of the convective term and extend the convergence results also to this case. In Section \ref{sec:5} we introduce the space $\Phi_h$ together with the associated degrees of freedom, prove the exact Stokes complex property and state the alternative curl formulation for the discrete problem. In Section \ref{sec:tests} we present a set of numerical tests, that also compare the proposed method with a direct $C^1$ discretization of the stream-function problem, briefly described in the Appendix, that is not associated to a Stokes complex.

% ---------------------------------------------------
Throughout the paper, we will follow the usual notation for Sobolev spaces
and norms \cite{Adams:1975}.
Hence, for an open bounded domain $\omega$,
the norms in the spaces $W^s_p(\omega)$ and $L^p(\omega)$ are denoted by
$\|{\cdot}\|_{W^s_p(\omega)}$ and $\|{\cdot}\|_{L^p(\omega)}$ respectively.
Norm and seminorm in $H^{s}(\omega)$ are denoted respectively by
$\|{\cdot}\|_{s,\omega}$ and $|{\cdot}|_{s,\omega}$,
while $(\cdot,\cdot)_{\omega}$ and $\|\cdot\|_{\omega}$ denote the $L^2$-inner product and the $L^2$-norm (the subscript $\omega$ may be omitted when $\omega$ is the whole computational
domain $\Omega$).
Moreover with a usual notation, the symbols $\gr$, $\Delta$ and $\boldsymbol{\nabla^2}$ denote the gradient, Laplacian and Hessian matrix for scalar functions, while 
$\dl $, $\Gr$, and $\dd$ denote the vector Laplacian,  the gradient operator
and the divergence  for vector fields.
Furthermore for a scalar function $\phi$ and a vector field $\vv := (v_1, \, v_2)$ we set
\begin{gather*}
%\label{curl2d}
\CC \, \phi := \left( \frac{\partial \phi}{\partial y}, \, -\frac{\partial \phi}{\partial x} \right) 
\qquad \text{and} \qquad
\cc \, \vv := \frac{\partial v_2}{\partial x}  -  \frac{\partial v_1}{\partial y} \,,
\\
%\label{curl2dx}
\phi \times \vv := \phi \left( -v_2, \, v_1\right) \,.
\end{gather*}

% ``STOKES COMPLEXES AND THE CONSTRUCTION OF STABLE FINITE ELEMENTS WITH POINTWISE MASS CONSERVATION'' ,  RICHARD S. FALK AND MICHAEL NEILAN

\section{The Navier--Stokes equation}
\label{sec:2}
%\Giuseppe{Da decidere se nella theory usare la forma grad-grad o epsilon-epsilon.}

We consider the steady Navier--Stokes equation on a polygonal
simply connected domain $\Omega \subseteq \R^2$
(for more details, see for instance \cite{giraultbook}) 
% with $d=2, 3$:
% 
\begin{equation}
\label{eq:ns primale}
\left\{
\begin{aligned}
& \text{ find $(\uu,p)$ such that}& &\\
& -  \nu \, \dl \uu + (\Gr \uu ) \,\uu -  \nabla p = \ff\qquad  & &\text{in $\Omega$,} \\
& \dd \, \uu = 0 \qquad & &\text{in $\Omega$,} \\
& \uu = 0  \qquad & &\text{on $\partial \Omega$,}
\end{aligned}
\right.
\end{equation}
where $\uu$, $p$ are the velocity and the pressure fields, respectively, 
$\nu \in {\mathbb R}$, $\nu > 0$ is the viscosity of the fluid and $\ff \in [L^2(\Omega)]^2$ represents 
the external force.
For sake of simplicity we here 
consider Dirichlet homogeneous boundary conditions, different boundary conditions can be treated as well. 
Problem \eqref{eq:ns primale} can be written in the equivalent \textit{rotational form}
\begin{equation}
\label{eq:ns primale rot}
\left\{
\begin{aligned}
& \mbox{ find $(\uu, P)$ such that}& &\\
& -  \nu \, \dl \uu + (\cc \, \uu) \times \uu -  \nabla P = \ff\qquad  & &\text{in $\Omega$,} \\
& \dd \, \uu = 0 \qquad & &\text{in $\Omega$,} \\
& \uu = 0  \qquad & &\text{on $\partial \Omega$.}
\end{aligned}
\right.
\end{equation}
Systems \eqref{eq:ns primale} and \eqref{eq:ns primale rot}
are equivalent in the sense that the velocity solutions $\uu$
coincide and the rotational pressure solution $P$ of Problem~\eqref{eq:ns primale rot},
the so-called \textit{Bernoulli pressure}, and the convective
pressure solution $p$ of Problem \eqref{eq:ns primale} are jointed by the relation
\begin{equation}
\label{eq:bernoulli}
P := p - \frac{1}{2} \,\uu \cdot \uu  +  \lambda \,,
\end{equation}
where, for the time being, $\lambda$ denotes a suitable constant.

Let us consider the spaces
\begin{equation*}
%\label{eq:spazi continui}
\VV:= \left[ H_0^1(\Omega) \right]^2, \qquad Q:= L^2_0(\Omega) = \left\{ q \in L^2(\Omega) \quad \text{s.t.} \quad \int_{\Omega} q \,{\rm d}\Omega = 0 \right\} 
\end{equation*}
endowed with natural norms
\begin{equation*}
%\label{eq:norme continue}
\| \vv \|_{\VV} := | \vv|_{ H^1(\Omega)} \,, \qquad 
\|q\|_Q := \| q\|_{L^2(\Omega)}. 
\end{equation*}
Let us introduce the bilinear forms
\begin{gather}
\label{eq:forma a}
a(\cdot, \cdot) \colon \VV \times \VV \to \R,    \qquad 
a (\mathbf{u},  \mathbf{v}) := \int_{\Omega}  \, \boldsymbol{\nabla}   \mathbf{u} : \boldsymbol{\nabla}  \mathbf{v} \,{\rm d} \Omega, \qquad \text{for all $\mathbf{u},  \mathbf{v} \in \mathbf{V}$,}
\\
\label{eq:forma b}
b(\cdot, \cdot) \colon \mathbf{V} \times Q \to \R \qquad b(\mathbf{v}, q) :=  \int_{\Omega}q\, {\rm div} \,\mathbf{v} \,{\rm d}\Omega \qquad \text{for all $\mathbf{v} \in \mathbf{V}$, $q \in Q$,}
\end{gather}
and the trilinear forms
\begin{align}
\label{eq:forma cconv}
\cconv(\cdot; \, \cdot, \cdot) &\colon \VV \times \VV \times \VV \to \R 
&\quad 
&\cconv(\ww; \, \uu, \vv) :=  \int_{\Omega} ( \Gr \uu ) \, \ww \cdot \vv  \,{\rm d}\Omega \,,
\\
\label{eq:forma cskew}
\cskew(\cdot; \, \cdot, \cdot) &\colon \VV \times \VV \times \VV \to \R 
&\quad 
&\cskew(\ww; \, \uu, \vv) :=  
\frac{1}{2} \cconv(\ww; \, \uu, \vv) -
\frac{1}{2} \cconv(\ww; \, \vv, \uu) \,,
\\
\label{eq:forma crot}
\crot(\cdot; \, \cdot, \cdot) &\colon \VV \times \VV \times \VV \to \R 
&\quad 
&\crot(\ww; \, \uu, \vv) :=  \int_{\Omega} ( \cc \ww  \times \, \uu) \cdot \vv  \,{\rm d}\Omega \,.
\end{align}
Direct computations give that
\begin{gather}
\label{eq:cconv-cskew}
\cconv(\uu; \, \uu, \vv) = \cskew(\uu; \, \uu, \vv) \qquad \text{for all $\uu$, $\vv \in \VV$ with$\dd \, \uu = 0$,}
\\
\label{eq:cconv-crot}
\cconv(\uu; \, \uu, \vv) = \crot(\uu; \, \uu, \vv) + 
\frac{1}{2} \int_{\Omega} \gr (\uu \cdot \uu) \, \vv \, {\rm d}\Omega
\qquad \text{for all $\uu$, $\vv \in \VV$.}
\end{gather}
In the following we denote with $c(\cdot; \, \cdot, \cdot)$ one of the trilinear forms listed above.
Then a standard variational formulation of Problem~\eqref{eq:ns primale} is:
\begin{equation}
\label{eq:ns variazionale}
\left\{
\begin{aligned}
& \text{find $(\uu, p) \in \VV \times Q$, such that} \\
& \nu \, a(\uu, \vv) + c(\uu; \, \uu, \vv) + b(\vv, p) = (\ff, \vv) \qquad & \text{for all $\vv \in \VV$,} \\
&  b(\uu, q) = 0 \qquad & \text{for all $q \in Q$,}
\end{aligned}
\right.
\end{equation}
where
\[
(\ff, \vv) := \int_{\Omega} \ff \cdot \vv \, {\rm d} \Omega .
\]
From \eqref{eq:cconv-crot} it is clear that if
$c(\cdot; \, \cdot, \cdot) = \crot(\cdot; \, \cdot, \cdot)$
we recover instead the variational formulation of
system~\eqref{eq:ns primale rot} and the pressure solution
$p$ is actually the  Bernoulli pressure $P$ where $\lambda$
in \eqref{eq:bernoulli} is the mean value of $\frac{1}{2} \,\uu \cdot \uu$.

It is well known (see, for instance, \cite{giraultbook})
that in the diffusion dominated regime, i.e. under the assumption
\begin{equation}
\label{eq:ns condition}
\mathbf{(A0)} \qquad \gamma := \frac{\widehat{C} \, \|\ff\|_{H^{-1}}}{\nu^2} < 1
\end{equation}
where $\widehat{C}$ denotes the continuity constant of
$c(\cdot; \, \cdot, \cdot)$ with respect to the $\VV$-norm,
Problem \eqref{eq:ns variazionale} is well-posed and the
unique solution $(\mathbf{u}, p) \in \mathbf{V} \times Q$ satisfies
\begin{equation}
\label{eq:solution estimates}
\| \uu\|_{\VV} \leq \frac{\| \ff\|_{H^{-1}}}{\nu} \,.
\end{equation}
%
%It is well known that with the choices \eqref{eq:norme continue}, we have (see for instance \cite{giraultbook}):
%% 
%\begin{itemize}
%\item $a(\cdot, \cdot)$, $b(\cdot, \cdot)$ and $c(\cdot; \, \cdot, \cdot)$ are continuous
%\[
%|a(\uu, \vv)| \leq  \|\uu\|_{\VV}\|\vv\|_{\VV} \qquad \text{for all $\uu, \vv \in \VV$,}
%\]
%\[
%|b(\vv, q)| \leq  \|\vv\|_{\VV} \|q\|_Q \qquad \text{for all $\vv \in \VV$ and $q \in Q$,}
%\]
%\[
%|c(\ww; \, \uu, \vv)| \leq \widehat{C} \, \|\ww\|_{\VV} \|\uu\|_{\VV} \|\vv\|_{\VV} 
%\qquad \text{for all $\ww, \uu, \vv \in \VV$;}
%\]
%\item $a(\cdot, \cdot)$ is coercive (with coercivity constant $\alpha =1$), i.e. 
%\[
%a(\mathbf{v}, \mathbf{v}) \geq  \|\mathbf{v}\|^2_{\VV} \qquad \text{for all $\mathbf{v} \in \mathbf{V}$;}
%\]
%\item  the bilinear form $b(\cdot,\cdot) $ and the space $\VV$ and $Q$ satisfy the inf-sup condition, i.e.   
%\begin{equation}
%\label{eq:inf-sup}
%\exists \, \beta >0 \quad \text{such that} \quad \sup_{\mathbf{v} \in \mathbf{V}, \, \mathbf{v} \neq \mathbf{0}} \frac{b(\mathbf{v}, q)}{ \|\mathbf{v}\|_{\VV}} \geq \beta \|q\|_Q \qquad \text{for all $q \in Q$.}
%\end{equation}
%\end{itemize}
%Therefore, if
%\begin{equation}
%\label{eq:ns condition}
%\gamma := \frac{\widehat{C} \, \|\ff\|_{-1}}{\nu^2} < 1
%\end{equation}
% Problem \eqref{eq:ns variazionale} has a unique solution $(\mathbf{u}, p) \in \mathbf{V} \times Q$ such that
%\begin{equation}
%\label{eq:solution estimates}
%\| \uu\|_{\VV} \leq \frac{\| \ff\|_{H^{-1}}}{\nu}.
%\end{equation}
%%
%%\qquad \text{and} \qquad \|p\|_Q \leq \frac{\|\ff\|_{H^{-1}}}{\beta}.
Finally, let us introduce the kernel of bilinear form $b(\cdot,\cdot)$,
\begin{equation}
\label{eq:kercontinuo}
\ZZ := \{ \vv \in \VV \quad \text{s.t.} \quad b(\vv, q) = 0 \quad \text{for all $q \in Q$} \}\,.
\end{equation}
Then, Problem~\eqref{eq:ns variazionale} can be formulated in the equivalent kernel form:
\begin{equation}
\label{eq:ns variazionale ker}
\left\{
\begin{aligned}
& \text{find $\uu \in \ZZ$, such that} \\
& \nu \, a(\uu, \vv) + c(\uu; \, \uu, \vv) = (\ff, \vv) \qquad & \text{for all $\vv \in \ZZ$.} 
\end{aligned}
\right.
\end{equation}
In this case, from \eqref{eq:cconv-cskew} and \eqref{eq:cconv-crot} it is straightforward to see that
\[
\cconv(\uu; \, \uu, \vv) = \cskew(\uu; \, \uu, \vv) = \crot(\uu; \, \uu, \vv)
\qquad \text{for all $\uu, \vv \in \ZZ$.}
\]
%

%------------------------------------------------------------------

\subsection{Curl and Stream Formulation of the Navier--Stokes Equations}
\label{sub:2.2}

If $\Omega$ is a simply connected domain, 
a well known result (see  \cite{giraultbook} for the details) 
states that
a vector function $\vv \in \ZZ$ if and only if there exists a scalar potential function $\phi \in H^2(\Omega)$, called \textbf{stream function} such that 
\[
\vv = \CC \, \phi \,.
\] 
Clearly the function $\phi$ is defined up to a constant.
Let us consider the space
\begin{gather}
\label{eq:Phi}
%\Phi := \left\{ \phi \in H^2(\Omega)\setminus \R \quad \text{s.t} \quad \CC \, \phi = \mathbf{0} \righ\} \\
\Phi := H^2_0(\Omega) = \left\{ \phi \in H^2(\Omega) \quad \text{s.t} \quad 
\phi = 0, \quad \frac{\partial \phi}{\partial n} = 0 
\quad \text{on $\partial \Omega$} \right\}
\end{gather} 
endowed with norm
\begin{equation*}
%\label{eq:norma Phi}
\| \phi\|_{\Phi} := |\Phi|_{H^2(\Omega)} \qquad \text{for all $\phi \in \Phi$.}
\end{equation*}
Then, Problem~\eqref{eq:ns variazionale ker} can be
formulated in the following $\CC$ \textit{formulation}:
\begin{equation}
\label{eq:ns variazionale curl}
\left\{
\begin{aligned}
& \text{find $\psi \in \Phi$, such that} \\
& \nu \, a(\CC \psi, \, \CC \phi) + c(\CC \psi; \, \CC \psi, \CC \phi) = (\ff, \CC \phi)
\qquad & \text{for all $\phi \in \Phi$.} 
\end{aligned}
\right.
\end{equation}

%\Giuseppe{Se si decide di fare la formulazione $C^1$}

\noindent  
A different approach that makes use of the stream functions is the following.
Let $\psi$ be the stream function of the velocity  solution $\uu$ of \eqref{eq:ns primale},  i.e. $\uu = \CC \psi$.
Then applying the $\cc$ operator to the equation \eqref{eq:ns primale rot}, and 
using simple computations on the differential operators  
we obtain the equivalent following problem: 
\begin{equation}
\label{eq:ns stream}
\left\{
\begin{aligned}
& \mbox{ find $\psi$ such that}& &\\
&  \nu \, \Delta^2 \psi -  \CC\, \psi \cdot \gr(\Delta \psi) = \cc\, \ff\qquad  & &\text{in $\Omega$,} \\
& \psi = 0  \qquad & &\text{on $\partial \Omega$,} \\
& \frac{\partial \psi}{\partial n} = 0  \qquad & &\text{on $\partial \Omega$.}
\end{aligned}
\right.
\end{equation} 
This elliptic equation, can be reformulated in a variational
way as follows, obtaining the so-called \textit{stream formulation}
(we refer again to \cite{giraultbook}):
\begin{equation}
\label{eq:ns stream variazionale}
\left\{
\begin{aligned}
& \text{ find $\psi \in \Phi$ such that}& &\\
&   \nu \, \widetilde{a}(\psi, \, \phi) 
\, +  \, 
\widetilde{c}(\psi; \, \psi, \,  \phi)  = (\cc \, \ff, \, \phi) 
\qquad & &\text{for all  $\phi \in \Phi$,}
\end{aligned}
\right.
\end{equation} 
where
\begin{gather}
\label{eq:forma at}
\widetilde{a}(\cdot, \cdot) \colon \Phi \times \Phi \to \R,    \qquad 
\widetilde{a} (\psi, \, \phi) := \int_{\Omega}  \, \Delta \, \psi \,\Delta \, \phi\,{\rm d} \Omega, \qquad \text{for all $\psi,  \phi \in \Phi$,}
\\
\label{eq:forma crott}
\widetilde{c}(\cdot; \, \cdot, \cdot) \colon \Phi \times \Phi \times \Phi \to \R \quad 
\widetilde{c}(\zeta; \, \psi, \phi) :=  \int_{\Omega} \Delta \zeta \, \CC\psi \cdot \gr \phi   \,{\rm d}\Omega \quad \text{for all $\zeta, \psi, \phi \in \Phi$.}
\end{gather}
Since the formulations \eqref{eq:ns variazionale curl} and \eqref{eq:ns stream variazionale}  are equivalent to Problem \eqref{eq:ns variazionale ker} (in turn equivalent to Problem \eqref{eq:ns variazionale}), the well-posedness of $\CC$ and stream formulations  follows from assumption $\mathbf{(A0)}$. Moreover from \eqref{eq:solution estimates} follows the stability estimate
\begin{equation}
\label{eq:solution estimate stream}
\| \psi \|_{\Phi} \leq \frac{\|\ff\|_{H^{-1}}}{\nu} \,.
\end{equation}

\section{Definitions and preliminaries}
\label{sec:3}

In the present section we introduce some basic tools and notations useful in the construction and theoretical analysis of Virtual Element Methods.
Let $\set{\Omega_h}_h$ be a sequence of decompositions of $\Omega$ into general polygonal elements $E$ with
\[
 h_E := {\rm diameter}(E) , \quad
h := \sup_{E \in \Omega_h} h_E .
\]
We suppose that for all $h$, each element $E$ in $\Omega_h$ fulfils the following assumptions:
\begin{description}
\item [$\mathbf{(A1)}$] $E$ is star-shaped with respect to a ball $B_E$ of radius $ \geq\, \rho \, h_E$, 
\item [$\mathbf{(A2)}$] the distance between any two vertexes of $E$ is $\geq \rho \, h_E$, 
\end{description}
where $\rho$ is a uniform positive constant. We remark that the hypotheses above, though not too restrictive in many practical cases, 
can be further relaxed, as investigated in ~\cite{2016stability}. 
For any $E \in \Omega_h$, using standard VEM notations, for $n \in \N$  let us introduce the spaces:
\begin{itemize}
\item $\Pk_n(E)$ the set of polynomials on $E$ of degree $\leq n$  (with the extended notation $\Pk_{-1}(E)=\emptyset$),
\item $\B_n(\partial E) := \{v \in C^0(\partial E) \quad \text{s.t} \quad v_{|e} \in \Pk_n(e) \quad \text{for all edge $e \subset \partial E$} \}$.
%\item $\mathcal{G}_{n}(E):= \nabla(\Pk_{n+1}(E)) \subseteq [\Pk_{n}(E)]^2$,
%\item $\mathcal{G}_{n}^{\oplus}(E) := \mathbf{x}^{\perp}[\Pk_{n-1}(E)] \subseteq [\Pk_{n}(E)]^2$ with $\mathbf{x}^{\perp}:= (x_2, -x_1)$,
%\item $\mathcal{R}_{n}(E):= \CC(\Pk_{n+1}(E)) \subseteq [\Pk_{n}(E)]^2$.
%
\end{itemize}
Note that
\begin{equation}
\label{eq:pkgkgkp}
%[\Pk_n(E)]^2 = \mathcal{G}_{n}(E) \oplus \mathcal{G}_{n}^{\oplus}(E) \,.
[\Pk_n(E)]^2 = \nabla(\Pk_{n+1}(E))  \oplus \mathbf{x}^{\perp} \,\Pk_{n-1}(E)   
\end{equation}
 with $\mathbf{x}^{\perp}:= (x_2, -x_1)$.
%
%Recalling that a sequence is exact if the image of each operator coincides with the kernel of the following one, it is easy to check that the following sequences are exact \cite{supermisti} : 
%%
%\begin{gather}
%\label{eq:exact1}
%\R \, \xrightarrow[]{\, \, \, \,\quad \text{{$i$}} \quad \, \, \, \,} \,
%\Pk_{n+1}(E) \, \xrightarrow[]{\, \, \, \quad \text{{$\gr$}} \quad \, \, \,}\,
%\mathcal{G}_{n}(E) \, \xrightarrow[]{\, \quad \text{{$\cc$}} \quad \,}\,
%0
%\\
%\label{eq:exact2}
%\R \, \xrightarrow[]{\, \, \, \,\quad \text{{$i$}} \quad \, \, \, \,} \,
%\Pk_{n+1}(E) \, \xrightarrow[]{\, \quad \text{{$\CC$}} \quad \,}\,
%\mathcal{R}_{n}(E) \, \xrightarrow[]{\, \quad \text{{$\dd$}} \quad \,}\,
%0
%\end{gather}
%%
%where $i$  denotes the mapping that to every real number $r$ associates the constant function identically equal to $r$.

\begin{remark}
\label{fact1}
%In particular, the exactness of sequence \eqref{eq:exact1} 
Note that \eqref{eq:pkgkgkp} implies that
the operator $\cc$ is an isomorphism from $\mathbf{x}^{\perp} \, \Pk_{n-1}(E)$ to the whole $\Pk_{n-1}(E)$, i.e. for any $q_{n-1} \in \Pk_{n-1}(E)$ there exists a unique $p_{n-1} \in \Pk_{n-1}(E)$ such that
\[
q_{n-1} = \cc (\mathbf{x}^{\perp} \,p_{n-1} ) \,.
\]
\end{remark} 
% 
%$\cc \colon \mathcal{G}_n^{\oplus}(E) \to \Pk_{n-1}(E)$ is an isomorphism. 
%
\noindent
We also have that
\begin{equation}
\label{eq:dimensions1}
\dim \left(\Pk_n(E) \right) = \frac{n(n+1)}{2} \,,
\qquad
\dim \left(\B_n(\partial E)\right) = n_E \, n
\end{equation}
where $n_E$ is the number of edges (or the number of vertexes) of the polygon $E$.
% 
%Combining the first equation in \eqref{eq:dimensions1} with \eqref{eq:exact1} and \eqref{eq:exact2} and recalling Remark \ref{fact1}, it follows that
%\begin{equation}
%\label{eq:dimensions2}
%\dim \left(\mathcal{G}_{n}(E)\right) = 
%\dim \left(\mathcal{R}_{n}(E)\right) = 
%\frac{(n+1)(n+2)}{2} -1\, ,
%\qquad 
%\dim \left(\mathcal{G}_{n}^{\oplus}(E)\right) = \frac{n(n+1)}{2} \,.
%\end{equation}

One core idea in the VEM construction is to define suitable (computable) polynomial projections. 
For any $n \in \N$ and $E \in \Omega_h$ we introduce the following polynomial projections:
\begin{itemize}

\item the $\boldsymbol{L^2}$\textbf{-projection} $\Pi_n^{0, E} \colon L^2(E) \to \Pk_n(E)$, given by
\begin{equation}
\label{eq:P0_k^E}
\int_Eq_n (v - \, {\Pi}_{n}^{0, E}  v) \, {\rm d} E = 0 \qquad  \text{for all $v \in L^2(E)$  and for all $q_n \in \Pk_n(E)$,} 
\end{equation} 
with obvious extension for vector functions $\Pi_n^{0, E} \colon [L^2(E)]^2 \to [\Pk_n(E)]^2$, and tensor functions 
$\boldsymbol{\Pi}_{n}^{0, E} \colon [L^2(E)]^{2 \times 2} \to [\Pk_{n}(E)]^{2 \times 2}$,

\item the $\boldsymbol{H^1}$\textbf{-seminorm projection} ${\Pi}_{n}^{\nabla,E} \colon H^1(E) \to \Pk_n(E)$, defined by 
\begin{equation}
\label{eq:Pn_k^E}
\left\{
\begin{aligned}
& \int_E \gr  \,q_n \cdot \gr ( v - \, {\Pi}_{n}^{\nabla,E}   v) \, {\rm d} E = 0 \qquad  \text{for all $v \in H^1(E)$ and for all $q_n \in \Pk_n(E)$,} \\
& \Pi_0^{0,E}(v - \,  {\Pi}_{n}^{\nabla, E}  v) = 0 \, ,
\end{aligned}
\right.
\end{equation} 
with obvious extension for vector functions $\Pi_n^{\nabla, E} \colon [H^1(E)]^2 \to [\Pk_n(E)]^2$,
%\Giuseppe{Da introdurre solo se si studiano i $C^1$}

\item the $\boldsymbol{H^2}$\textbf{-seminorm projection} ${\Pi}_{n}^{\nabla^2,E} \colon H^2(E) \to \Pk_n(E)$, defined by 
\begin{equation}
\label{eq:Pn2_k^E}
\left\{
\begin{aligned}
& \int_E \boldsymbol{\nabla^2} \,q_n : \boldsymbol{\nabla^2} (v- \, {\Pi}_{n}^{\nabla^2,E}   v) \, {\rm d} E = 0 \qquad  \text{for all $v \in H^2(E)$ and for all $q_n \in \Pk_n(E)$,} \\
& \int_{\partial E}v - \,  {\Pi}_{n}^{\nabla^2, E}  v  \, {\rm d}s= 0 \, 
\qquad \text{and} \qquad
\int_{\partial E} \frac{\partial}{\partial n} \left(v - \,  {\Pi}_{n}^{\nabla^2, E}  v\right)  \, {\rm d}s= 0 \,.
\end{aligned}
\right.
\end{equation}

%\item the \textbf{tensor-valued} $\boldsymbol{L^2}$\textbf{-projection operator}  $\boldsymbol{\Pi}_{n}^{0, E} \colon [L^2(E)]^{2 \times 2} \to [\Pk_{n}(E)]^{2 \times 2}$, given by
%\begin{equation}
%\label{eq:PP0_k^E}
%\int_E \mathbf{P}_{n} \, : \, \left(\mathbf{A} - \boldsymbol{\Pi}_{n}^{0, E} \mathbf{A} \right)    \, {\rm d} E = 0 \qquad \text{for all $\mathbf{A}  \in [L^2(\Omega)]^{2 \times 2}$ and $\mathbf{P}_{n} \in [\Pk_{n}(E)]^{2 \times 2}$.}
%\end{equation}
\end{itemize}
Finally, let us recall a classical approximation result for $\Pk_n$
polynomials on star-shaped domains, see for instance \cite{brennerscott}:
\begin{lemma}
\label{fact2}
Under the assumptions $\mathbf{(A1)}$ and $\mathbf{(A2)}$,
for any $v \in H^{s+1}(E)$, with $s \geq 0$, it holds
\begin{gather*}
\|v - \Pi_n^{0, E} v\|_{m, E} \leq C \, h^{s +1 - m}_E \, |v|_{s+1, E}
\qquad \text{for $m \leq s + 1 \leq n+1$,}
\\
\|v - \Pi_n^{\nabla, E} v \|_{m, E} \leq C \, h^{s + 1- m}_E \, |v|_{s+1, E}
\qquad \text{for $m \leq s + 1 \leq n+1$, }
\end{gather*}
with $C$ depending only on $n$ and the shape  constant $\rho$ in assumptions $\mathbf{(A1)}$
and $\mathbf{(A2)}$.
\end{lemma}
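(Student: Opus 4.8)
The plan is to reduce both families of estimates to two classical ingredients: a Bramble--Hilbert (Deny--Lions) polynomial approximation bound valid on domains star-shaped with respect to a ball, and an inverse inequality for polynomials on such domains. The orthogonality conditions defining the two projections then convert these generic bounds into the asserted rates, while the uniform dependence of the constant $C$ on $n$ and $\rho$ alone is guaranteed by the shape-regularity assumptions $\mathbf{(A1)}$ and $\mathbf{(A2)}$.

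For the $L^2$ projection I would argue as follows. By the Bramble--Hilbert lemma, under $\mathbf{(A1)}$ there is a polynomial $q \in \Pk_n(E)$ with
\[
|v - q|_{\ell, E} \leq C \, h_E^{s+1-\ell} \, |v|_{s+1, E}, \qquad 0 \leq \ell \leq s+1 .
\]
The case $m = 0$ follows at once from the $L^2$-optimality of $\Pi_n^{0,E}$, since $\|v - \Pi_n^{0,E} v\|_{0,E} \leq \|v - q\|_{0,E}$. For $m \geq 1$ I would split $v - \Pi_n^{0,E} v = (v - q) + (q - \Pi_n^{0,E} v)$, bound the first term by the displayed estimate, and treat the second---a polynomial of degree $\leq n$---with an inverse inequality $|q - \Pi_n^{0,E} v|_{m,E} \leq C \, h_E^{-m} \, \|q - \Pi_n^{0,E} v\|_{0,E}$, inserting the $m=0$ bound already obtained.

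For the $H^1$-seminorm projection the structure is the same. The case $m = 1$ is immediate from the $H^1$-seminorm optimality built into the first line of~\eqref{eq:Pn_k^E}, giving $|v - \Pi_n^{\nabla,E} v|_{1,E} \leq |v - q|_{1,E}$. The case $m = 0$ uses instead the normalization $\Pi_0^{0,E}(v - \Pi_n^{\nabla,E} v) = 0$: since $v - \Pi_n^{\nabla,E} v$ has zero average on $E$, a Poincar\'e inequality gives $\|v - \Pi_n^{\nabla,E} v\|_{0,E} \leq C \, h_E \, |v - \Pi_n^{\nabla,E} v|_{1,E}$, and the $m=1$ bound closes the estimate. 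For $m \geq 2$ I would again split off a Bramble--Hilbert polynomial and apply the inverse inequality, reducing to the lower-order bounds.

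The main obstacle is not any single one of these steps but the uniformity of the constants across the whole admissible polygon family: one must show that the Bramble--Hilbert constant, the Poincar\'e constant, and the polynomial inverse constant all depend only on $n$ and on the shape parameter $\rho$, and not on the individual element $E$ or on $h_E$. This is handled by the usual scaling argument---rescaling $E$ to unit diameter reduces everything to a family of reference configurations which, by $\mathbf{(A1)}$ and $\mathbf{(A2)}$, is uniformly shape-regular---so that a compactness argument yields a single constant. Since this is exactly the content of the classical result cited from \cite{brennerscott}, in the final write-up I would simply invoke it rather than reproduce the scaling.
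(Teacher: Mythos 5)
Your proposal is correct and matches, in substance, the standard argument for this classical result: the paper itself gives no proof, simply citing \cite{brennerscott}, and your scheme --- Bramble--Hilbert for a near-best polynomial, $L^2$- (resp.\ $H^1$-seminorm) optimality of the projection for the base case, the zero-average normalization plus a Poincar\'e inequality for the $m=0$ case of $\Pi_n^{\nabla,E}$, polynomial inverse estimates for higher $m$, and a scaling argument under $\mathbf{(A1)}$--$\mathbf{(A2)}$ for uniformity of the constants --- is precisely the proof found in that reference. No gaps to report.
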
 
In the following the symbol $C$ will indicate a generic positive constant,
independent of the mesh size $h$, the viscosity $\nu$ and the constant
$\gamma$ appearing in $\mathbf{(A0)}$, but which may depend on $\Omega$,
the integer $k$ (representing the ``polynomial'' order of the method) and
on the shape constant $\rho$ in assumptions $\mathbf{(A1)}$
and $\mathbf{(A2)}$. 
Furthermore, $C$ may vary at each occurrence.

\section{Virtual elements velocity-pressure formulation}
\label{sec:4}
 
In the present section we outline a  short overview of the Virtual
Element discretization of Navier--Stokes Problem \eqref{eq:ns variazionale}. 
We will make use of various tools from the virtual element technology,
that will be described briefly; we refer the interested
reader to the papers
\cite{Antonietti-BeiraodaVeiga-Mora-Verani:20XX,Stokes:divfree,vacca:2018, Bdv-Lovadina-Vacca:2018}.
We recall that in \cite{Stokes:divfree}  a new family of Virtual Elements
for the Stokes Equation has been introduced.
The core idea is to define suitable Virtual space of velocities,
associated to a Stokes-like variational problem on each element,
such that the discrete velocity kernel is pointwise divergence-free. 
In \cite{vacca:2018} has been presented an enhanced Virtual space
to be used in place of the original one, that, taking the inspiration from \cite{Ahmed-et-al:2013},
allows the explicit knowledge of the $L^2$-projection onto
the  polynomial space $\Pk_k$ (being $k$ the order of the method). 
In \cite{Bdv-Lovadina-Vacca:2018} a Virtual Element scheme for
the Navier--Stokes equation in classical velocity-pressure
formulation has been proposed. In the following we give some
basic tools and a brief overview of such scheme.
We focus particularly on the virtual element discretization
of Navier--Stokes equation in rotation form \eqref{eq:ns primale rot}
related to the trilinear form $\crot(\cdot; \, \cdot, \cdot)$
defined in \eqref{eq:forma crott} that was not treated in \cite{Bdv-Lovadina-Vacca:2018}. 
Specifically for the resulting discrete scheme we develop
the convergence analysis for both the Bernoulli and the related convective pressure.

\subsection{Virtual elements spaces}
\label{sub:4.1}

Let $k \geq 2$ the polynomial degree of accuracy of the method. We consider  on each element $E \in \Omega_h$ the (enlarged) finite dimensional local virtual space
%the (original) finite dimensional local virtual space \cite{Stokes:divfree}:
%\begin{multline}
%\label{eq:W_h}
%%\label{eq:enalrged}
%\mathbf{W}_h^E := \biggl\{  
%\mathbf{v} \in [H^1(E)]^2 \quad \text{s.t} \quad \mathbf{v}_{|{\partial E}} \in [\B_k(\partial E)]^2 \, , \biggr.
%\\
%\left.
%\biggl\{
%\begin{aligned}
%& - \boldsymbol{\Delta}    \mathbf{v}  -  \nabla s \in \mathcal{G}_{k-2}^{\oplus}(E),  \\
%& {\rm div} \, \mathbf{v} \in \Pk_{k-1}(E),
%\end{aligned}
%\biggr. \qquad \text{ for some $s \in L^2(E) \setminus \R$}
%\quad \right\}
%\end{multline}
%where all the operators and equations above are to be interpreted in the distributional sense.
%Then we enlarge the previous space by introducing:
\begin{multline*}
%\label{eq:enalrged}
\mathbf{U}_h^E := \biggl\{  
\mathbf{v} \in [H^1(E)]^2 \quad \text{s.t} \quad \mathbf{v}_{|{\partial E}} \in [\B_k(\partial E)]^2 \, , \biggr.
\\
\left.
\biggl\{
\begin{aligned}
& - \boldsymbol{\Delta}    \mathbf{v}  -  \nabla s \in \mathbf{x}^{\perp}\, \Pk_{k-1}(E),  \\
& {\rm div} \, \mathbf{v} \in \Pk_{k-1}(E),
\end{aligned}
\biggr. \qquad \text{ for some $s \in  L^2(E) \setminus \R$}
\quad \right\}.
\end{multline*}
Now, we define the Virtual Element space $\mathbf{V}_h^E$
as the restriction  of $\mathbf{U}_h^E$ given by (cf. \cite{Bdv-Lovadina-Vacca:2018}):
\begin{equation}
\label{eq:V_h^E}
\mathbf{V}_h^E := \left\{ \mathbf{v} \in \mathbf{U}_h^E \quad \text{s.t.} \quad   \left(\mathbf{v} - \Pi^{\nabla,E}_k \mathbf{v}, \, \mathbf{x}^{\perp}\, \widehat{p}_{k-1} \right)_{E} = 0 \quad \text{for all $\widehat{p}_{k-1} \in  \widehat{\Pk}_{k-1 \setminus k-3}(E)$} \right\} ,
\end{equation}
where the symbol $\widehat{\Pk}_{k-1 \setminus k-3}(E) := \Pk_{k-1}(E) \setminus \Pk_{k-3}(E)$
denotes the polynomials in $\Pk_{k-1}(E)$ that are $L^2$-orthogonal
to all polynomials of $\Pk_{k-3}(E)$.
%, observing that $\mathcal{G}_{k-2}^{\oplus}(E) \subset \mathcal{G}_{k}^{\oplus}(E)$.
%
%

We here summarize the main properties of the virtual
space $\VV_h^E$  (we refer  \cite{vacca:2018,Bdv-Lovadina-Vacca:2018} for a deeper analysis):
\begin{itemize}
\item \textbf{dimension:}
 the dimension of $\VV_h^E$  is
 \begin{equation}
\label{eq:dimensione V_h^E}
\begin{split}
\dim\left( \mathbf{V}_h^E \right) 
&= 2n_E \, k + \frac{(k-1)(k-2)}{2}  + \frac{(k+1)k}{2} - 1
\end{split}
\end{equation}
where $n_E$ is the number of vertexes of $E$;
\item \textbf{degrees of freedom:}
the following linear operators $\mathbf{D_V}$, split into four subsets (see Figure \ref{fig:dofsloc}) constitute a set of DoFs for $\VV_h^E$:
\begin{itemize}
\item $\mathbf{D_V1}$:  the values of $\mathbf{v}$ at the vertexes of the polygon $E$,
\item $\mathbf{D_V2}$: the values of $\mathbf{v}$ at $k-1$ distinct points of every edge $e \in \partial E$,
\item $\mathbf{D_V3}$: the moments of $\mathbf{v}$
\[
\int_E \mathbf{v} \cdot \mathbf{x}^{\perp}\, p_{k-3} \, {\rm d}E \qquad \text{for all $p_{k-3} \in \Pk_{k-3}(E)$,}
\]
\item $\mathbf{D_V4}$: the moments of ${\rm div} \,\mathbf{v}$ 
\[
\int_E ({\rm div} \,\mathbf{v}) \, q_{k-1} \, {\rm d}E \qquad \text{for all $q_{k-1} \in \Pk_{k-1}(E) \setminus \R$;}
\] 
\end{itemize}
\item \textbf{projections:}
the DoFs $\mathbf{D_V}$ allow us to compute exactly (c.f. \eqref{eq:Pn_k^E} and \eqref{eq:P0_k^E})
\[
\PN \colon \VV_h^E \to [\Pk_k(E)]^2, \qquad
\P0 \colon \VV_h^E \to [\Pk_k(E)]^2, \qquad
\PP0 \colon \Gr(\VV_h^E) \to [\Pk_{k-1}(E)]^{2 \times 2},
\]
in the sense that, given any $\vv_h \in \VV_h^E$, we are able to compute the polynomials
$\PN \vv_h$, $\P0 \vv_h$ and $\PP0\nabla\vv_h$ only using, as unique information, the degree of freedom values $\mathbf{D_V}$ of $\vv_h$.
\end{itemize}
\begin{figure}[!h]
\center{
\includegraphics[scale=0.25]{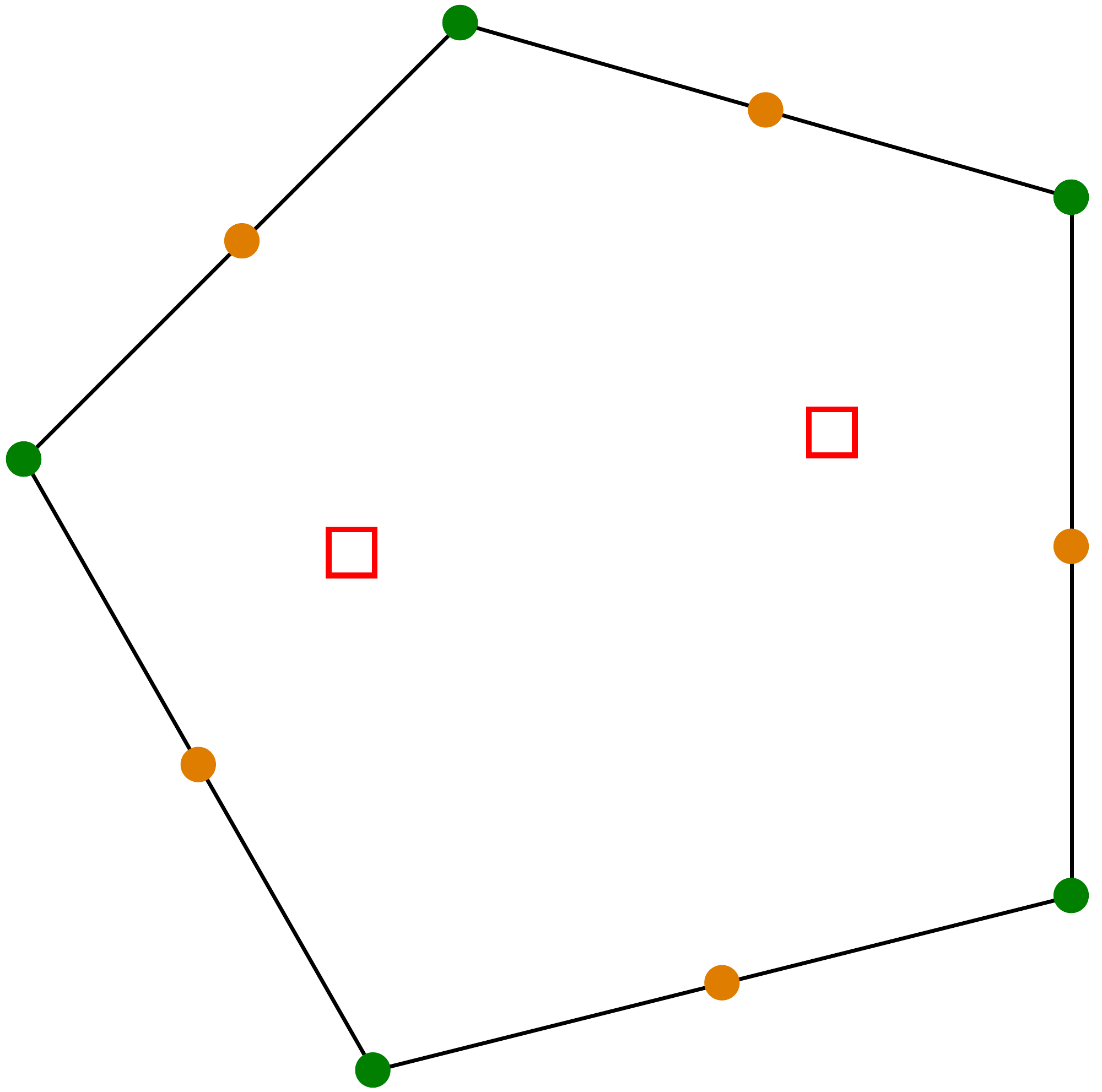} 
\qquad \qquad \qquad
\includegraphics[scale=0.25]{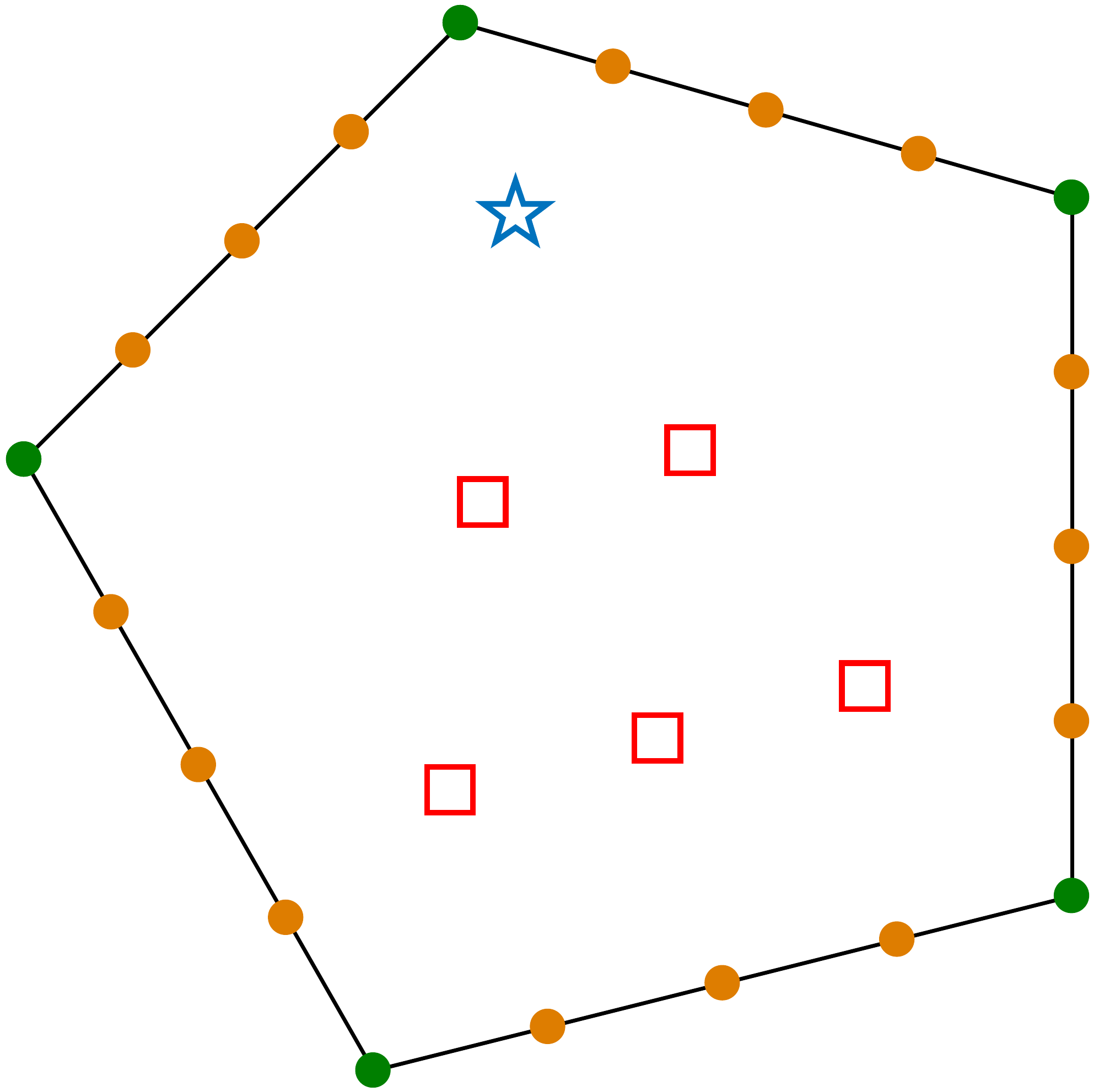}
\caption{DoFs for $k=2$ (left), $k=3$ (right). We denote $\mathbf{D_V1}$ with green dots, $\mathbf{D_V2}$ with orange dots, $\mathbf{D_V3}$ with blue stars, $\mathbf{D_V4}$ with red squares.}
\label{fig:dofsloc}
}
\end{figure}
The global virtual element space is obtained as usual by combining the local spaces $\VV_h^E$
accordingly to the local degrees of freedom, as in standard  finite elements and considering the homogeneous boundary conditions. We obtain the global space
\begin{equation}
\label{eq:V_h}
\mathbf{V}_h := \{ \mathbf{v} \in [H^1_0(\Omega)]^2  \quad \text{s.t} \quad \mathbf{v}_{|E} \in \mathbf{V}_h^E  \quad \text{for all $E \in \Omega_h$} \} \,.
\end{equation} 
The space $\VV_h$ has an optimal interpolation order of
accuracy with respect to $k$ (see Theorem 4.1 in \cite{Bdv-Lovadina-Vacca:2018}).
\begin{theorem}
\label{thm:interpolante}
Let $\vv \in H^{s+1}(\Omega) \cap \VV$, for $0<s \le k$.
Then, there exists $\vv_I \in \VV_h$ such that
\[
\| \vv - \vv_I \|_0 + h \, \| \vv - \vv_I \|_{\VV} \leq C \, h^{s+1} \, | \vv |_{s+1},
\]
where the constant $C$ depends only on the degree $k$
and the shape regularity constant $\rho$ (see assumptions $\mathbf{(A1)}$
and $\mathbf{(A2)}$ of Section~\ref{sec:3}).
\end{theorem}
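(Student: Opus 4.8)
The plan is to define $\vv_I$ element by element by matching degrees of freedom, and to reduce the global bound to a local estimate that is then summed over the mesh. First I would verify that the functionals $\mathbf{D_V}$ make sense on $\vv\in H^{s+1}(\Omega)\cap\VV$: since $s>0$, in two dimensions $H^{s+1}(E)\hookrightarrow C^0(\overline E)$, so the vertex values $\mathbf{D_V1}$ and the edge point values $\mathbf{D_V2}$ are well defined, while $\mathbf{D_V3}$ and $\mathbf{D_V4}$ are plain integrals. I then define $\vv_I\in\VV_h^E$ as the unique virtual field sharing the $\mathbf{D_V}$-values of $\vv$ (the DoFs being unisolvent on $\VV_h^E$). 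Because the boundary DoFs are single-valued across adjacent elements, the local pieces glue into a global $\vv_I\in\VV_h$; and since $\vv\in[H^1_0(\Omega)]^2$ has vanishing boundary DoFs, $\vv_I$ inherits the homogeneous boundary condition. It then suffices to prove, for $m\in\{0,1\}$, the local bound $\|\vv-\vv_I\|_{m,E}\le C\,h_E^{\,s+1-m}|\vv|_{s+1,E}$ and to sum its squares using $h_E\le h$.

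To obtain the local estimate I would pass, via assumptions $\mathbf{(A1)}$ and $\mathbf{(A2)}$, to a shape-regular reference configuration $\hat E$ of unit diameter and study the DoF-matching operator $I^{\hat E}$ there. Two ingredients are needed. The first is that $I^{\hat E}$ reproduces vector polynomials of degree $\le k$: indeed $[\Pk_k(E)]^2\subset\VV_h^E$, since for $\mathbf{p}\in[\Pk_k(E)]^2$ the splitting \eqref{eq:pkgkgkp} applied to $-\dl\mathbf{p}\in[\Pk_{k-2}(E)]^2$ supplies an admissible $s$, while $\dd\,\mathbf{p}\in\Pk_{k-1}(E)$ and $\mathbf{p}-\PN\mathbf{p}=0$ dispose of the remaining defining conditions; hence $I^{\hat E}\hat{\mathbf{p}}=\hat{\mathbf{p}}$ for all $\hat{\mathbf{p}}\in[\Pk_k(\hat E)]^2$. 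The second ingredient, and the crux of the argument, is that $I^{\hat E}\colon H^{s+1}(\hat E)\to H^1(\hat E)$ be bounded with a constant depending only on $k$ and $\rho$.

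The boundedness of $I^{\hat E}$ is the main obstacle, because elements of $\VV_h^E$ are not polynomials but are defined implicitly through the local Stokes-type problem. I would split it into: (i) boundedness of the DoF functionals on $H^{s+1}(\hat E)$ — the point-value functionals are controlled by the embedding $H^{s+1}(\hat E)\hookrightarrow C^0$, which is exactly where $s>0$ enters, and the moment functionals by Cauchy--Schwarz together with $\|\dd\,\hat\ww\|_{0,\hat E}\le\|\hat\ww\|_{1,\hat E}$; and (ii) a norm-equivalence for the virtual space, bounding $\|\hat\ww_h\|_{1,\hat E}$ by the Euclidean norm of the DoF vector of $\hat\ww_h$. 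Step (ii) is the delicate point: it relies on the continuous dependence of the solution of the defining elliptic problem on its boundary trace and its right-hand side, combined with trace and Poincar\'e inequalities on the shape-regular $\hat E$, and must be tracked under scaling to recover the correct powers of $h_E$.

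With reproduction and boundedness in hand, the operator $\mathrm{id}-I^{\hat E}$ is bounded on $H^{s+1}(\hat E)$ and annihilates $[\Pk_k(\hat E)]^2$, so, since $k\ge s$, the Bramble--Hilbert lemma yields $\|\hat\vv-I^{\hat E}\hat\vv\|_{m,\hat E}\le C\,|\hat\vv|_{s+1,\hat E}$ for $m\in\{0,1\}$. Scaling back to $E$ restores the correct powers of $h_E$, giving $\|\vv-\vv_I\|_{m,E}\le C\,h_E^{\,s+1-m}|\vv|_{s+1,E}$; summing the squares over $E\in\Omega_h$ produces the asserted global estimate for $m=0$ and $m=1$, whose combination is precisely the statement. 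The same polynomial approximation content is, of course, also available through Lemma~\ref{fact2}, should one prefer to split $\vv-\vv_I=(\vv-\P0\vv)-I^E(\vv-\P0\vv)$ and bound the two summands separately.
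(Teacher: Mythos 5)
Your outline is correct in its outer layers: the DoFs $\mathbf{D_V}$ are well defined on $H^{s+1}(\Omega)\cap\VV$ for $s>0$ by the embedding $H^{1+s}(E)\hookrightarrow C^0(\overline E)$, the gluing and boundary-condition arguments are fine, and your verification that $[\Pk_k(E)]^2\subset\VV_h^E$ via the splitting \eqref{eq:pkgkgkp} applied to $-\dl\mathbf{p}$ is exactly right. But note first that the paper itself does not prove this theorem: it imports it verbatim from Theorem 4.1 in \cite{Bdv-Lovadina-Vacca:2018}, so the relevant comparison is with the proof there (built on the interpolation analysis of \cite{Stokes:divfree}). That proof does \emph{not} go through a reference element and a DoF norm equivalence; it exploits the fact that functions of $\VV_h^E$ solve a local Stokes-type system, and controls $\vv-\vv_I$ through the a priori stability estimate of that system in terms of its data: the boundary trace (approximated edgewise by one-dimensional polynomial interpolation, handled with trace inequalities) and the divergence (an $L^2$ projection of $\dd\,\vv$), with the enhancement constraints treated separately. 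No equivalence between the $H^1$ norm and the Euclidean norm of the DoF vector is ever needed.

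This matters because your step (ii) is a genuine gap, and you have placed the entire weight of the proof on it. General polygons satisfying $\mathbf{(A1)}$--$\mathbf{(A2)}$ do not form an affine-equivalent family, so ``passing to a shape-regular reference configuration $\hat E$'' really means proving the equivalence $\|\hat\ww_h\|_{1,\hat E}\simeq |\mathbf{D_V}(\hat\ww_h)|$ \emph{uniformly over the whole class} of unit-diameter polygons with shape constant $\rho$; for an implicitly defined, enhanced Stokes-type space this is not an off-the-shelf result, and your proposed mechanism (continuous dependence of the local elliptic problem on its trace and right-hand side) does not close it: the load $\nabla s+\mathbf{x}^{\perp}p_{k-1}$ and the multiplier $s$ are not degrees of freedom, so one must additionally bound these polynomial data by the moments $\mathbf{D_V3}$, $\mathbf{D_V4}$ uniformly in the shape class --- essentially a uniform inf-sup/inverse estimate that is the whole difficulty, left unproved in your sketch. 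A further, more minor, point: the embedding constant of $H^{1+s}(\hat E)\hookrightarrow C^0(\overline{\hat E})$ blows up as $s\to0^+$, so the constant your argument produces depends on $s$, whereas the statement allows dependence only on $k$ and $\rho$; this is repairable (e.g., by an averaged interpolant or by comparing first with the Clément-type approximation, as in the cited proofs), but as written it falls short of the claim.
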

The pressure space is simply given by the piecewise polynomial functions
\begin{equation}
\label{eq:Q_h}
Q_h := \{ q \in L_0^2(\Omega) \quad \text{s.t.} \quad q_{|E} \in
\Pk_{k-1}(E) \quad \text{for all $E \in \Omega_h$}\},
\end{equation}
with the obvious associated set of global degrees of freedom:
\begin{itemize}
\item $\mathbf{D_Q}$: the moments up to order $k-1$ of $q$, i.e.
\[
\int_E q \, p_{k-1} \, {\rm d}E \qquad \text{for all $p_{k-1} \in \Pk_{k-1}(E)$
and for all $E \in \Omega_h$.}
\]
\end{itemize}
%
%
%For the pressure we consider the standard finite dimensional space 
%\begin{equation}
%\label{eq:Q_h^E}
%Q_h^E := \Pk_{k-1}(E) 
%\end{equation}
%having dimension
%\begin{equation*}
%%\label{eq:dimensione Q_h^E}
%\dim(Q_h^E) = \dim(\Pk_{k-1}(E))  = \frac{(k+1)k}{2}.
%\end{equation*}
%The corresponding degrees of freedom are chosen defining for each $q\in Q_h^E$ the following linear operators $\mathbf{D_Q}$:
%%
%\begin{itemize}
%%
%\item $\mathbf{D_Q}$: the moments up to order $k-1$ of $q$, i.e.
%\[
%\int_E q \, p_{k-1} \, {\rm d}E \qquad \text{for all $p_{k-1} \in \Pk_{k-1}(E)$.}
%\]
%%
%\end{itemize}
%%
%Finally we define the global virtual element spaces as
%%
%\begin{equation}
%\label{eq:V_h}
%\mathbf{V}_h := \{ \mathbf{v} \in [H^1_0(\Omega)]^2  \quad \text{s.t} \quad \mathbf{v}_{|E} \in \mathbf{V}_h^E  \quad \text{for all $E \in \Omega_h$} \}
%\end{equation} 
%and
%
%with the obvious associated sets of global degrees of freedom. A simple computation shows that:
%%
%\begin{equation*}
%%\label{eq:Vdofs}
%\dim(\mathbf{V}_h) = n_P \left( \frac{(k+1)k}{2} -1  +  \frac{(k-1)(k-2)}{2} \right) 
%+ 2(n_V + (k-1) n_e)
%\end{equation*}
%%
%and
%%
%\begin{equation*}
%%\label{eq:Qdofs}
%\dim(Q_h) = n_P \frac{(k+1)k}{2} - 1 ,
%\end{equation*}
%%
%where $n_P$ is the number of elements, $n_e$, $n_V$ is the number of  internal edges and vertexes in $\Omega_h$.
%
A crucial observation is that, by definitions \eqref{eq:V_h} and \eqref{eq:Q_h}, it holds 
\begin{equation}\label{eq:divfree}
{\rm div}\, \mathbf{V}_h\subseteq Q_h \,.
\end{equation}
Therefore the discrete kernel
\begin{equation}
\label{eq:Zh}
\mathbf{Z}_h := \{ \mathbf{v}_h \in \mathbf{V}_h \quad \text{s.t.} \quad b(\mathbf{v}_h, q_h) = 0 \quad \text{for all $q_h \in Q_h$}\},
\end{equation}
is a subspace of the continuous kernel $\ZZ$ defined in \eqref{eq:kercontinuo}, i.e.
\begin{equation}
\label{eq:inclusion}
\ZZ_h \subseteq \ZZ \,.
\end{equation}
This leads to a series of important advantages, as explored in 
\cite{Linke2, Linke3, Bdv-Lovadina-Vacca:2018,vacca:2018}.
Finally, we remark that the kernel $\ZZ_h$ is obtained
by gluing the local kernels explicitly defined by
\begin{multline}
\label{eq:Z_h^E}
\mathbf{Z}_h^E := \biggl\{  
\mathbf{v} \in [H^1(E)]^2 \quad \text{s.t} \quad   
\left(\mathbf{v} - \Pi^{\nabla,E}_k \mathbf{v}, \, \mathbf{x}^{\perp} \, \widehat{p}_{k-1} \right)_{E} = 0 \quad \text{for all $\widehat{p}_{k-1} \in  \widehat{\Pk}_{k-1 \setminus k-3}(E)$}
\biggr.
\\
\left.
\mathbf{v}_{|{\partial E}} \in [\B_k(\partial E)]^2 \, , 
\quad 
\biggl\{
\begin{aligned}
& - \boldsymbol{\Delta}    \mathbf{v}  -  \nabla s \in \mathbf{x}^{\perp} \, \Pk_{k-1},  \\
& {\rm div} \, \mathbf{v} = 0,
\end{aligned}
\biggr. \qquad \text{ for some $s \in L^2(E) \setminus \R$}
\quad \right\} \,.
\end{multline}
The dimension of $\ZZ_h^E$  is
\begin{equation}
\label{eq:dimensione Z_h^E}
\begin{split}
\dim\left( \mathbf{Z}_h^E \right) 
&= 2n_E k + \frac{(k-1)(k-2)}{2}  - 1.
\end{split}
\end{equation}
where $n_E$ is the number of vertexes of $E$.

% -----------------------------------------------------------------------------------------------------------------------------
% -----------------------------------------------------------------------------------------------------------------------------
\subsection{Discrete bilinear forms and load term approximation}
\label{sub:4.2}

In this subsection we briefly describe the
construction of a discrete version of the bilinear form
$a(\cdot, \cdot)$  given in \eqref{eq:forma a}
and trilinear forms $c(\cdot; \cdot, \cdot)$
(cf. \eqref{eq:forma cconv}, \eqref{eq:forma cskew}, \eqref{eq:forma crot}).
We can follow in a rather slavish way the procedure initially introduced in \cite{volley} for the laplace problem and further developed in \cite{Stokes:divfree, vacca:2018, Bdv-Lovadina-Vacca:2018} for flow problems.
First, we decompose into local contributions the bilinear
form $a(\cdot, \cdot)$ and the trilinear forms $c(\cdot; \cdot, \cdot)$ by considering:
\begin{equation*}
a (\mathbf{u},  \mathbf{v}) =: \sum_{E \in \Omega_h} a^E (\mathbf{u},  \mathbf{v}) \,,
\qquad
c(\ww; \, \uu, \vv) =: \sum_{E \in \Omega_h} c^E(\ww; \, \uu, \vv) \,
\qquad 
\text{for all $\ww, \uu, \vv \in \VV$.}
\end{equation*}
Therefore, following a standard procedure in the VEM framework, we define a computable discrete local bilinear form
\begin{equation}
\label{eq:a_h^E} 
a_h^E(\cdot, \cdot) \colon \mathbf{V}_h^E \times \mathbf{V}_h^E \to \R
\end{equation}
approximating the continuous form $a^E(\cdot, \cdot)$, and defined by
\begin{equation}
\label{eq:a_h^E def}
a_h^E(\mathbf{u}_h, \mathbf{v}_h) := a^E \left(\PN \mathbf{u}_h, \, \PN \mathbf{v}_h \right) + \mathcal{S}^E \left((I - \PN) \mathbf{u}_h, \, (I -\PN) \mathbf{v}_h \right)
\end{equation}
for all $\mathbf{u}_h, \mathbf{v}_h \in \mathbf{V}_h^E$, where the (symmetric) stabilizing bilinear form $\mathcal{S}^E \colon \mathbf{V}_h^E \times \mathbf{V}_h^E \to \R$ satisfies 
\begin{equation}
\label{eq:S^E}
\alpha_* a^E(\mathbf{v}_h, \mathbf{v}_h) \leq  \mathcal{S}^E(\mathbf{v}_h,
\mathbf{v}_h) \leq \alpha^* a^E(\mathbf{v}_h, \mathbf{v}_h) \qquad
\text{$\forall \, \mathbf{v}_h \in \mathbf{V}_h^E$,   ${\Pi}_{k}^{\nabla ,E} \mathbf{v}_h= \mathbf{0}$}
\end{equation}
with $\alpha_*$ and $\alpha^*$  positive  constants independent of the element $E$.
For instance, a standard choice is 
$\mathcal{S}^E(\mathbf{u}_h, \mathbf{v}_h) = \sum_{i=1}^{N_{DoFs}}\mathbf{D_V}_i(\mathbf{u}_h) \, \mathbf{D_V}_i(\mathbf{v}_h)$
where $\mathbf{D_V}_i(\mathbf{u}_h)$ denotes the $i$-th  DoFs value of 
$\uu_h$, opportunely scaled.
It is straightforward to check that the definition of $\Pi^{\nabla, E}_{k}$ projection ~\eqref{eq:Pn_k^E} and properties~\eqref{eq:S^E} imply 
that the discrete form $a_h^E(\cdot, \cdot)$
satisfies the consistency and stability properties.
The global approximated bilinear form $a_h(\cdot, \cdot) \colon \mathbf{V}_h \times \mathbf{V}_h \to \R$ is defined by simply summing the local contributions:
\begin{equation}
\label{eq:a_h}
a_h(\mathbf{u}_h, \mathbf{v}_h) := \sum_{E \in \Omega_h}  a_h^E(\mathbf{u}_h, \mathbf{v}_h) \qquad \text{for all $\mathbf{u}_h, \mathbf{v}_h \in \mathbf{V}_h$.}
\end{equation}
We now define discrete versions of the forms  $c(\cdot; \,\cdot, \cdot)$. 
Referring to \eqref{eq:forma cconv},
\eqref{eq:forma cskew}, \eqref{eq:forma crot} 
we set
for all $\ww_h, \uu_h, \vv_h \in \VV_h^E$:
\begin{align}
\label{eq:cconv_h^E}
\cconvh^E(\ww_h; \, \uu_h, \vv_h) &:= \int_E \left[ \left(\PP0 \, \Gr \uu_h  \right)  \left(\P0 \ww_h \right) \right] \cdot \P0 \vv_h \, {\rm d}E 
\\
\label{eq:cskew_h^E}
\cskewh^E(\ww_h; \, \uu_h, \vv_h) &:= \frac{1}{2}\cconvh^E(\ww; \, \uu, \vv) - \frac{1}{2}\cconvh^E(\ww; \, \vv, \uu)
\\
\label{eq:crot_h^E}
\croth^E(\ww_h; \, \uu_h, \vv_h) &:= \int_E \left[ \left(\Pi_{k-1}^{0,E} \, \cc \ww_h  \right) \times \left(\P0 \uu_h \right) \right] \cdot \P0 \vv_h \, {\rm d}E 
\end{align}
and note that all quantities in the previous formulas are computable. 
Let $c_h^E(\cdot; \, \cdot, \cdot)$ be one of the discrete trilinear forms listed above.
As usual we define the global approximated trilinear form by adding the local contributions:
\begin{equation}
\label{eq:c_h}
c_h(\ww_h; \, \uu_h, \vv_h) := \sum_{E \in \Omega_h}  c_h^E(\ww_h; \, \uu_h, \vv_h), \qquad \text{for all $\ww_h, \uu_h, \vv_h \in \VV_h$.}
\end{equation}
The forms $c_h(\cdot; \, \cdot, \cdot)$ are immediately extendable to the whole $\VV$ (simply apply the same definition for any $\ww, \uu, \vv \in \VV$). 
Moreover, the trilinear forms $c_h(\cdot; \, \cdot, \cdot)$ are continuous on $\VV$, i.e. 
there exists a uniform bounded constant $\widehat{C}_h$ such that
\begin{equation}
\label{eq:CC}
|c_h(\ww; \, \uu, \vv)| \leq 
\widehat{C}_h \, \|\ww\|_{\VV} \|\uu\|_{\VV} \|\vv\|_{\VV}
\qquad \text{for all $\ww, \uu, \vv \in \VV$.}
\end{equation}
The proof of the continuity for the trilinear forms $\cconvh(\cdot; \, \cdot, \cdot)$ and
$\cskewh(\cdot; \, \cdot, \cdot)$ can be found in Proposition 3.3 in \cite{Bdv-Lovadina-Vacca:2018}. Analogous techniques can be used to prove  the continuity of the trilinear form $\croth(\cdot; \, \cdot, \cdot)$.
For what concerns $b(\cdot, \cdot)$, as noticed in \cite{Stokes:divfree} we do not need to introduce any approximation of the bilinear form, since it can be exactly computed by the DoFs $\mathbf{D_{\VV}}$.

The last step consists in constructing a computable
approximation of the right-hand side $(\mathbf{f}, \, \mathbf{v})$
in \eqref{eq:ns variazionale}.  We define the approximated load term $\mathbf{f}_h$ as 
\begin{equation}
\label{eq:f_h}
\mathbf{f}_h|_E := \Pi_{k}^{0,E} \mathbf{f} \qquad \text{for all $E \in \Omega_h$,}
\end{equation}
and consider:
\begin{equation}
\label{eq:right}
(\mathbf{f}_h, \mathbf{v}_h)  = \sum_{E \in \Omega_h} \int_E \mathbf{f}_h \cdot \mathbf{v}_h \, {\rm d}E = \sum_{E \in \Omega_h} \int_E \Pi_{k}^{0,E} \mathbf{f} \cdot \mathbf{v}_h \, {\rm d}E = \sum_{E \in \Omega_h} \int_E \mathbf{f} \cdot \Pi_{k}^{0,E}  \mathbf{v}_h \, {\rm d}E \,.
\end{equation}

%-----------------------------------------------------------------------------------------------------------------------------

\subsection{The discrete problem}\label{sec:discrete}
\label{sub:4.3}

Referring to~\eqref{eq:V_h}, \eqref{eq:Q_h},  ~\eqref{eq:a_h},  ~\eqref{eq:c_h}, ~\eqref{eq:forma b}
and \eqref{eq:right}, the virtual element approximation of the
Navier--Stokes equation in the velocity-pressure formulation is given by:
\begin{equation}
\label{eq:ns virtual}
\left\{
\begin{aligned}
& \text{find $(\uu_h, p_h) \in \VV_h \times Q_h$, such that} \\
& \nu \, a_h(\uu_h, \vv_h) + c_h(\uu_h; \,  \uu_h, \vv_h) + b(\vv_h, p_h) = (\ff_h, \vv_h) \qquad & \text{for all $\vv_h \in \VV_h$,} \\
&  b(\uu_h, q_h) = 0 \qquad & \text{for all $q_h \in Q_h$,}
\end{aligned}
\right.
\end{equation}
with $c_h(\cdot; \, \cdot, \cdot)$ given by \eqref{eq:cconv_h^E},
\eqref{eq:cskew_h^E} or \eqref{eq:crot_h^E}. 
Whenever the choice \eqref{eq:crot_h^E} is adopted, the pressure
output in \eqref{eq:ns virtual} approximates the Bernoulli
pressure $P$ in \eqref{eq:bernoulli} instead of the convective pressure $p$.
Recalling the kernel inclusion \eqref{eq:inclusion},
Problem~\eqref{eq:ns virtual} can be also formulated in the equivalent kernel form
\begin{equation}
\label{eq:nsvirtual ker}
\left\{
\begin{aligned}
& \text{find $\uu_h \in \ZZ_h$, such that} \\
& \nu \, a_h(\uu_h, \vv_h) + c_h(\uu_h; \, \uu_h, \vv_h) = (\ff_h, \vv_h) \qquad & \text{for all $\vv_h \in \ZZ_h$.} 
\end{aligned}
\right.
\end{equation}
The well-posedness of the discrete problems can be
stated in the following theorem (cf. \cite{Bdv-Lovadina-Vacca:2018}).
\begin{theorem}
\label{thm:well}
Under the assumption
\begin{equation}
\label{eq:ns virtual condition}
\mathbf{(A0)_h} 
\qquad 
\gamma_h := \frac{\widehat{C}_h \, \|\ff_h\|_{H^{-1}}}{\alpha_*^2 \, \nu^2}  < 1 \,
\end{equation}
Problem \eqref{eq:ns virtual} has a unique solution $(\uu_h, p_h) \in \VV_h \times Q_h$ such that
\begin{equation}
\label{eq:solution virtual estimates}
\| \uu_h\|_{\VV} \leq \frac{\| \ff_h\|_{H^{-1}}}{\alpha_* \, \nu} \,.
\end{equation}
\end{theorem}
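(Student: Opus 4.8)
The plan is to follow the classical Navier--Stokes well-posedness argument (as in \cite{giraultbook}), transferred to the discrete setting, splitting the work into existence, uniqueness and pressure recovery. First I would reduce Problem~\eqref{eq:ns virtual} to the kernel formulation \eqref{eq:nsvirtual ker}: since ${\rm div}\,\VV_h \subseteq Q_h$ and the pair $(\VV_h, Q_h)$ satisfies a discrete inf-sup condition (established in \cite{Bdv-Lovadina-Vacca:2018}), the two formulations are equivalent, the pressure being recoverable at the very end. Hence it suffices to find a unique $\uu_h \in \ZZ_h$ solving \eqref{eq:nsvirtual ker}.

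For existence together with the a priori bound \eqref{eq:solution virtual estimates}, I would rely on two structural facts. The discrete form $a_h$ is coercive on the kernel: combining the definition \eqref{eq:a_h^E def}, the $a^E$-orthogonality of $\PN$, and the stability bounds \eqref{eq:S^E} gives $a_h(\vv_h, \vv_h) \geq \alpha_* \|\vv_h\|_{\VV}^2$. Moreover the diagonal of the trilinear form vanishes, $c_h(\vv_h; \vv_h, \vv_h) = 0$ for $\vv_h \in \ZZ_h$: for $\cskewh$ this is immediate from antisymmetry, while for $\croth$ it follows from the pointwise identity $(\phi \times \vv) \cdot \vv = 0$. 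With these at hand, I would introduce the operator $\mathcal{T}\colon \ZZ_h \to \ZZ_h$ (via the Riesz isomorphism) defined by $(\mathcal{T}(\vv_h), \ww_h) := \nu\, a_h(\vv_h, \ww_h) + c_h(\vv_h; \vv_h, \ww_h) - (\ff_h, \ww_h)$, and evaluate it on the diagonal to obtain $(\mathcal{T}(\vv_h), \vv_h) \geq \nu \alpha_* \|\vv_h\|_{\VV}^2 - \|\ff_h\|_{H^{-1}} \|\vv_h\|_{\VV}$, which is nonnegative on the sphere of radius $\|\ff_h\|_{H^{-1}}/(\alpha_* \nu) + \epsilon$. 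The standard corollary of Brouwer's fixed-point theorem then yields a zero $\uu_h$ inside that ball, and letting $\epsilon \to 0$ delivers \eqref{eq:solution virtual estimates}.

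For uniqueness I would exploit the smallness condition $\mathbf{(A0)_h}$. Given two solutions $\uu_h^1, \uu_h^2 \in \ZZ_h$ with difference $\ww_h$, subtracting the two equations, expanding by trilinearity, and testing with $\ww_h$ (so that $c_h(\uu_h^2; \ww_h, \ww_h)$ vanishes by the diagonal property) leaves $\nu\, a_h(\ww_h, \ww_h) = -\,c_h(\ww_h; \uu_h^1, \ww_h)$. Bounding the right-hand side with the continuity estimate \eqref{eq:CC} and the a priori bound on $\|\uu_h^1\|_{\VV}$ gives $(1 - \gamma_h)\, \alpha_* \nu\, \|\ww_h\|_{\VV}^2 \leq 0$, whence $\ww_h = 0$ since $\gamma_h < 1$. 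Finally, the pressure $p_h \in Q_h$ is recovered uniquely from the residual functional $\ww_h \mapsto (\ff_h, \ww_h) - \nu\, a_h(\uu_h, \ww_h) - c_h(\uu_h; \uu_h, \ww_h)$, which annihilates $\ZZ_h$ and is therefore represented through $b(\cdot, p_h)$ by the discrete inf-sup condition.

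The main obstacle is the vanishing of $c_h(\vv_h; \vv_h, \vv_h)$ on $\ZZ_h$ for the purely convective choice \eqref{eq:cconv_h^E}. Unlike the skew and rotational forms, $\cconvh$ is not antisymmetric, and because it is assembled from the projections $\P0$ and $\PP0$, the exact divergence-free character of $\ZZ_h$ does not transfer through the projections to produce the integration-by-parts cancellation available in the continuous identity \eqref{eq:cconv-cskew}. Handling this case cleanly requires either invoking the consistency of these projections on $\VV_h^E$ to control the spurious diagonal contribution, or replacing the Brouwer argument with a Banach contraction on the Oseen map under $\mathbf{(A0)_h}$; this is the point where the three discretizations genuinely differ.
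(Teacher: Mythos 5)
Your strategy---reduction to the kernel $\ZZ_h$ via the discrete inf-sup condition, coercivity $a_h(\vv_h,\vv_h)\geq \alpha_*\|\vv_h\|_{\VV}^2$ from \eqref{eq:a_h^E def} and \eqref{eq:S^E}, Brouwer's corollary for existence with the a priori bound, smallness for uniqueness, and pressure recovery from the residual functional---is precisely the standard route; note that the paper itself offers no proof of Theorem~\ref{thm:well} and defers entirely to \cite{Bdv-Lovadina-Vacca:2018}, where this type of fixed-point argument is carried out. For the choices $\cskewh$ and $\croth$ your proof is complete and yields exactly the stated constants: the diagonal vanishes (by antisymmetry, respectively by the pointwise identity $(\phi\times\vv)\cdot\vv=0$ applied with $\Pi^{0,E}_k\vv_h$ in both slots), so the sign condition closes on the sphere of radius $r:=\|\ff_h\|_{H^{-1}}/(\alpha_*\nu)$, existence in fact holds without any smallness, and your uniqueness computation correctly produces the factor $(1-\gamma_h)$.

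The genuine gap is the convective choice \eqref{eq:cconv_h^E}, which Problem \eqref{eq:ns virtual}---and hence the theorem---explicitly admits. You identify the obstruction correctly ($\cconvh$ is assembled from $L^2$-projections, so the divergence-free cancellation does not survive and $\cconvh(\vv_h;\vv_h,\vv_h)\neq 0$ even on $\ZZ_h\subseteq\ZZ$), but neither of your proposed repairs closes it at the stated level of generality. On the sphere $\|\vv_h\|_{\VV}=r$ one only gets $(\mathcal{T}\vv_h,\vv_h)\geq \nu\alpha_* r^2-\widehat{C}_h r^3-\|\ff_h\|_{H^{-1}}r=-\widehat{C}_h r^3<0$, so Brouwer fails at precisely the radius demanded by \eqref{eq:solution virtual estimates}. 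The Banach alternative under only $\gamma_h<1$ fares no better: for the explicit iteration $\nu\,a_h(T\ww_h,\vv_h)=(\ff_h,\vv_h)-\cconvh(\ww_h;\ww_h,\vv_h)$, invariance of a ball of radius $R$ requires $\widehat{C}_hR^2-\nu\alpha_* R+\|\ff_h\|_{H^{-1}}\leq 0$, which has real solutions only when $\gamma_h\leq 1/4$; for the implicit Oseen map the coercivity of $\nu\,a_h(\cdot,\cdot)+\cconvh(\ww_h;\cdot,\cdot)$ degrades to $\nu\alpha_*(1-\gamma_h)$ and delivers only $\|\uu_h\|_{\VV}\leq r/(1-\gamma_h)$; and in your uniqueness step the term $c_h(\uu_h^2;\ww_h,\ww_h)$ no longer drops, costing a second factor $\gamma_h$ via \eqref{eq:CC} and hence requiring $\gamma_h<1/2$. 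So for $\cconvh$ your outline proves the theorem only after strengthening the smallness assumption or weakening the bound (equivalently, after rescaling the constant absorbed into $\widehat{C}_h$); to get the statement as written for all three trilinear forms you must either make such an adjustment explicit or invoke the analysis of \cite{Bdv-Lovadina-Vacca:2018}, as the paper does.
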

\noindent
We have the following approximation results (see Theorem 4.6
and Remark 4.2 in \cite{Bdv-Lovadina-Vacca:2018}
for the choices \eqref{eq:cconv_h^E} and \eqref{eq:cskew_h^E}). 
\begin{theorem}
\label{thm:u}
Under the assumptions  $\mathbf{(A0)}$ and $\mathbf{(A0)_h}$, 
let $(\uu, p) \in \VV \times Q$ be the solution of Problem \eqref{eq:ns variazionale} and $(\uu_h, p_h) \in \VV_h \times Q_h$ be the solution of Problem \eqref{eq:ns virtual}. 
Assuming moreover $\uu, \ff \in [H^{s+1}(\Omega)]^2$ and $p \in H^s(\Omega)$, $0 < s \le k$, then 
\begin{gather}
\label{eq:thm:u}
\| \uu - \uu_h \|_{\VV} \leq \, h^{s} \, \mathcal{F}(\uu; \, \nu, \gamma, \gamma_h) + \, h^{s+2} \, \mathcal{H}(\ff; \nu, \gamma_h)
\\
\label{eq:p-est}
\|p  - p_h\|_Q \leq C \, h^{s} \, |p|_{s} +  h^s \, \mathcal{K}(\uu; \nu, \gamma, \gamma_h) + C \, h^{s+2} \, |\ff|_{s+1} 
\end{gather}
for a suitable functions 
$\mathcal{F}$, 
$\mathcal{H}$,
$\mathcal{K}$ 
independent of $h$. 
\end{theorem}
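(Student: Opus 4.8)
The plan is to follow the by-now classical strategy for nonlinear VEM error analysis: work first in the kernel formulation to estimate the velocity, then recover the pressure through an inf-sup argument. Throughout I would use the interpolant of Theorem \ref{thm:interpolante}, the polynomial approximation of Lemma \ref{fact2}, and the uniform discrete inf-sup stability of the pair $\{\VV_h, Q_h\}$ (which underlies the well-posedness already quoted in Theorem \ref{thm:well}).

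I would estimate the velocity first. Since $\uu$ is divergence-free and the divergence moments constitute the degrees of freedom $\mathbf{D_V4}$, the interpolant $\uu_I \in \VV_h$ of Theorem \ref{thm:interpolante} satisfies $\dd \uu_I = 0$, hence $\uu_I \in \ZZ_h$. Setting $\ddd_h := \uu_h - \uu_I \in \ZZ_h$ and subtracting the kernel formulations \eqref{eq:ns variazionale ker} and \eqref{eq:nsvirtual ker} (testing the continuous one against $\ddd_h \in \ZZ_h \subseteq \ZZ$, by \eqref{eq:inclusion}), one obtains an error identity for $\nu\, a_h(\ddd_h, \ddd_h)$ whose right-hand side collects the load consistency $(\ff_h - \ff, \ddd_h)$, the diffusion consistency $\nu[a(\uu, \ddd_h) - a_h(\uu_I, \ddd_h)]$, and the convective consistency $c(\uu; \uu, \ddd_h) - c_h(\uu_h; \uu_h, \ddd_h)$. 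Using the coercivity of $a_h$ guaranteed by \eqref{eq:S^E} on the left, the first two terms are bounded by the standard VEM machinery, producing the $h^s$ contributions, while the orthogonality built into \eqref{eq:f_h}--\eqref{eq:right} upgrades the load term to order $h^{s+2}$.

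The delicate term is the convective consistency. I would add and subtract $c_h(\uu_I; \uu_I, \ddd_h)$, splitting it into a consistency part $c(\uu; \uu, \ddd_h) - c_h(\uu_I; \uu_I, \ddd_h)$ — controlled via the computability of the projections in \eqref{eq:cconv_h^E}/\eqref{eq:crot_h^E}, the approximation properties of $\PN, \P0, \PP0$, and the regularity $\uu \in [H^{s+1}(\Omega)]^2$, again of order $h^s$ — and a term $c_h(\uu_I; \uu_I, \ddd_h) - c_h(\uu_h; \uu_h, \ddd_h)$ which, by the continuity \eqref{eq:CC} and the stability bound \eqref{eq:solution virtual estimates}, is bounded by $C\,\widehat{C}_h\,(\|\uu\|_\VV + \|\uu_h\|_\VV)\,\|\ddd_h\|_\VV^2 + (\text{data})\,\|\ddd_h\|_\VV$. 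Collecting everything, a contribution proportional to $\gamma_h\, \nu\, \|\ddd_h\|_\VV^2$ appears on the right and is absorbed into the left using $\gamma_h < 1$ from $\mathbf{(A0)_h}$. This absorption is the main obstacle: the entire estimate hinges on it, which is precisely why $\mathbf{(A0)}$ and $\mathbf{(A0)_h}$ enter the functions $\mathcal{F}, \mathcal{H}$. A final triangle inequality $\|\uu - \uu_h\|_\VV \le \|\uu - \uu_I\|_\VV + \|\ddd_h\|_\VV$ then yields \eqref{eq:thm:u}.

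For the pressure I would invoke the uniform discrete inf-sup stability, so that for the piecewise $L^2$-projection $p_I \in Q_h$ of $p$ (with $\|p - p_I\|_Q \le C\, h^s\, |p|_s$) one has $\beta\, \|p_h - p_I\|_Q \le \sup_{\vv_h \in \VV_h} b(\vv_h, p_h - p_I)/\|\vv_h\|_\VV$. Subtracting the momentum equations of \eqref{eq:ns virtual} and \eqref{eq:ns variazionale} expresses $b(\vv_h, p_h - p)$ through the same four consistency contributions as above, now together with the velocity error entering via $a$ and $c$; using the velocity estimate just proved, the continuity \eqref{eq:CC}, and the consistency/approximation bounds, the supremum is controlled and a final triangle inequality delivers \eqref{eq:p-est}. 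No genuinely new difficulty arises at this stage beyond reusing the velocity estimate and the same consistency analysis.
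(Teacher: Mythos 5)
Your proposal is correct and coincides with the paper's own route: Theorem \ref{thm:u} is proved there by invoking Theorem 4.6 and Remark 4.2 of \cite{Bdv-Lovadina-Vacca:2018}, whose argument is exactly your scheme --- a divergence-free interpolant $\uu_I\in\ZZ_h$, the kernel error equation with the convective part split and absorbed via $\gamma_h<1$ from $\mathbf{(A0)_h}$, the double $L^2$-orthogonality in \eqref{eq:f_h}--\eqref{eq:right} producing the $h^{s+2}$ load contribution, and the discrete inf-sup condition for the pressure. The only loosely argued step is $\dd\,\uu_I=0$: matching the $\mathbf{D_V4}$ moments fixes only the part of $\dd\,\uu_I$ orthogonal to constants on each element, while the elementwise mean is controlled by the boundary data, so one must use the interpolant of the cited construction (whose divergence equals the $L^2$-projection of $\dd\,\uu$, mean included), which is precisely what Theorem \ref{thm:interpolante} provides and makes your step sound.
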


Following the same steps as in \cite{Bdv-Lovadina-Vacca:2018}, Theorem \ref{thm:u} easily extends also to the choice \eqref{eq:crot_h^E}. In such case 
we preliminary observe that if the velocity solution $\uu \in [H^{s+1}(\Omega)]^2$ and the convective pressure $p \in H^s(\Omega)$ then the Bernoulli pressure $P$  is in $H^s(\Omega)$.
As a matter of fact, recalling \eqref{eq:bernoulli} by the H\"older inequality and Sobolev embedding $H^{s+1}(\Omega) \subset W^{s}_4(\Omega)$, we recover
\[
\|P\|_s \leq
\|p\|_s + \frac{1}{2} \, \|\uu \cdot \uu\|_s + \|\lambda\|_s \leq
\|p\|_s + \frac{1}{2} \, \|\uu \|_{W^s_4}^2 + \|\lambda\|_s \leq
 \|p\|_s + \frac{1}{2} \, \|\uu\|^2_{s+1} + |\lambda| |\Omega|^{1/2}\,.
\]
Now let $(\uu_h,\, P_h)$ be the solution of the virtual problem \eqref{eq:ns virtual} with the trilinear form \eqref{eq:crot_h^E} and $(\uu, \, P)$ be the solution of the Navier--Stokes equation \eqref{eq:ns primale rot}. Then \eqref{eq:p-est} is substituted by
\begin{equation}
\label{eq:P-est}
\|P  - P_h\|_Q \leq C \, h^{s} \, |P|_{s} +  h^s \, \mathcal{K}(\uu; \nu, \gamma, \gamma_h) + C \, h^{s+2} \, |\ff|_{s+1} \,.
\end{equation}
In such case the following computable approximation $p_h$ of the convective pressure $p$ is available:
\begin{equation}
\label{eq:ph_convective}
{p_h}_{|E} := {P_h}_{|E} +
\frac{1}{2} \,\Pi_k^{0, E} \uu_h \cdot  \Pi_k^{0, E} \uu_h - 
\lambda_h
\end{equation}
where $\lambda_h$ is the mean value of the piecewise polynomial function $\frac{1}{2} \,\Pi_k^{0, E} \uu_h \cdot  \Pi_k^{0, E} \uu_h$.
The optimal order of accuracy for the convective pressure  can established as follows.
Definitions \eqref{eq:bernoulli} (taking $\lambda$ as the mean value of 
$\frac{1}{2} \, \uu \cdot \uu$)
and \eqref{eq:ph_convective} easily imply
\begin{equation}
\label{eq:bernoulliest1}
\begin{split}
\|p - p_h\|_Q &\leq 	\|P - P_h\|_Q + 
\biggl \| \frac{1}{2} \left(\uu \cdot \uu  - 
 \, \sum_{E \in \Omega_h} \Pi_k^{0, E} \uu_h \cdot  \Pi_k^{0, E} \uu_h \right) - (\lambda - \lambda_h) \biggr \|_Q
\\
&\leq 	\|P - P_h\|_Q + 
\frac{1}{2} \biggl \|  \uu \cdot \uu  - 
 \, \sum_{E \in \Omega_h} \Pi_k^{0, E} \uu_h \cdot  \Pi_k^{0, E} \uu_h \biggr \|_Q
\\
& = 	\|P - P_h\|_Q + \frac{1}{2} \, \Big( \sum_{E \in \Omega_h} \left \| \uu \cdot \uu - \Pi_k^{0, E} \uu_h \cdot  \Pi_k^{0, E} \uu_h \right\|_{Q, E}^2 \Big)^{1/2}  \\
&=:  \|P - P_h\|_Q + \frac{1}{2} \, \Big( \sum_{E \in \Omega_h} \mu_E^2 \Big)^{1/2} \,,
\end{split}
\end{equation}
where in the second inequality we have used the fact that all terms inside the norms are zero averaged.
The first term in the right hand side of \eqref{eq:bernoulliest1} is bounded by \eqref{eq:P-est}.
Whereas for the terms $\mu^E$ the triangular inequality and  the H\"older inequality entail
\begin{equation}
\label{eq:bernoulli2}
\begin{split}
\mu_E
& \leq 
 \|\uu \cdot \uu - \Pi_k^{0, E} \uu \cdot  \Pi_k^{0, E} \uu\|_E
+ \|\Pi_k^{0, E} \uu \cdot  \Pi_k^{0, E} \uu - \Pi_k^{0, E} \uu_h \cdot  \Pi_k^{0, E} \uu_h\|_E
\\
& =
 \|(\uu -  \Pi_k^{0, E} \uu) \cdot  (\uu + \Pi_k^{0, E} \uu)\|_E
+  
\|\Pi_k^{0, E} (\uu  -\uu_h) \cdot \Pi_k^{0, E} (\uu  +\uu_h)\|_E
\\
&\leq
 \|\uu -  \Pi_k^{0, E} \uu \|_{L^4(E)} \, \| \uu + \Pi_k^{0, E} \uu\|_{L^4(E)} 
+
\|\Pi_k^{0, E} (\uu  -\uu_h)  \|_{L^4(E)} \, \| \Pi_k^{0, E} (\uu  +\uu_h) \|_{L^4(E)} \,.
\end{split}
\end{equation}
Using the Sobolev embedding $H^{1}(\Omega) \subset L^4(\Omega)$,
the continuity of the projection $\Pi_k^{0, E}$ with respect
to the  $L^4$-norm and the $H^1$-norm  (see, for instance, \cite{Bdv-Lovadina-Vacca:2018}), 
and polynomial approximation estimates on star shaped polygons of Lemma \ref{fact2},
from \eqref{eq:bernoulli2} we infer
\begin{equation}
\label{eq:bernoulli3}
\begin{split}
\mu_E
&\leq
\|\uu -  \Pi_k^{0, E} \uu \|_{\VV, E} \, \| \uu + \Pi_k^{0, E} \uu\|_{\VV, E}  
+ \| \uu  -\uu_h \|_{L^4(E)} \, \| \uu  +\uu_h\|_{L^4(E)}
\\
&\leq
C \, h^s \, |\uu|_{s+1, E} \, \| \uu\|_{\VV, E} + 
 \| \uu  -\uu_h \|_{\VV, E} \, \| \uu  +\uu_h\|_{\VV, E}
\\
&\leq
C \, h^s \, |\uu|_{s+1, E} \, \| \uu\|_{\VV, E} + 
 \| \uu  -\uu_h \|_{\VV, E}   \, \left(\| \uu\|_{\VV, E}  +\|\uu_h\|_{\VV, E} \right) \,.
\end{split}
\end{equation}
Combining bound \eqref{eq:bernoulli3} with the H\"older
inequality for sequences, the velocity error estimate
\eqref{eq:thm:u} and with the stability estimates
\eqref{eq:solution estimates} and \eqref{eq:solution virtual estimates}, it follows
\begin{equation}
\label{eq:bernoulliest3}
\begin{aligned}
\Big( \sum_{E \in \Omega_h} \mu_E^2 \Big)^{1/2} &\leq 
\Big( 
\sum_{E \in \Omega_h} C \, h^{2s} \, |\uu|_{s+1, E}^2 \, \| \uu\|_{\VV, E}^2 + 
\sum_{E \in \Omega_h} \| \uu  -\uu_h \|_{\VV, E}^2   \, ( \| \uu\|_{\VV, E}^2  +\|\uu_h\|_{\VV, E}^2 ) 
\Big)^{1/2}
\\
&\leq
C \, h^s \, |\uu|_{s+1} \, \|\uu\|_{\VV} + 
 \| \uu  -\uu_h \|_{\VV}   \, \left(\| \uu\|_{\VV}  +\|\uu_h\|_{\VV} \right)
\\
&\leq
C \, h^s \, |\uu|_{s+1} \, \frac{\| \ff\|_{H^{-1}}}{\nu}
+\| \uu  -\uu_h \|_{\VV} \,
\left ( \frac{\| \ff\|_{H^{-1}}}{\nu} + 
\frac{\| \ff_h\|_{H^{-1}}}{\alpha_* \, \nu} \right)
\\
& \leq  h^s \, \mathcal{L} (\uu, \ff; \nu, \gamma, \gamma_h) + \, h^{s+2} \, \mathcal{I}(\ff; \nu, \gamma_h)  
\end{aligned}
\end{equation}
for a suitable functions $\mathcal{L}$ and $\mathcal{I}$ independent of $h$.
Finally, inserting estimates \eqref{eq:P-est} and \eqref{eq:bernoulliest3}
in \eqref{eq:bernoulliest1} we obtain the optimal convergence result
for the convective pressure also for choice \eqref{eq:crot_h^E}.
%%
%\begin{remark}
%Actually the approximation $p_h$ of the convective pressure $p$ stated in \eqref{eq:ph_convective} is a piecewise polynomial of degree at most $2 \, k$. 
%%
%Nevertheless, a more natural choice (in the sense of the  \eqref{eq:Q_h}) to approximate the convective pressure $p$,  
%consists on considering a piecewise polynomial of degree at most $k -1$.
%%
%An alternative computable approximation of the convective pressure $p$ is given by the piecewise polynomial $\widehat{{p_h}}$ of degree $(k-1)$ defined by
%\begin{equation}
%\label{eq:ph_convective1}
%\widehat{{p_h}}_{|E} := {P_h}_{|E} + 
%\frac{1}{2} \, \Pi_{k-1}^{0, E} \left(\Pi_k^{0, E} \uu_h \cdot  \Pi_k^{0, E} \uu_h \right) - \lambda_h
%\qquad \text{for all $E \in \Omega_h$.}
%\end{equation} 
%
%It easy to check that the order of the accuracy of the above approximation is still optimal. 
%\end{remark}

\begin{remark}
We observe that, due to the divergence-free property, the estimate on the velocity error in Theorem \ref{thm:u} does not depend on the continuous pressure, whereas the velocity errors of classical methods have a pressure contribution, see \cite{Bdv-Lovadina-Vacca:2018} for more details on this aspect.
\end{remark}
\begin{remark}
\label{rm:trilinear}
A careful investigation of the proof of Theorem \ref{thm:u} (see Lemma 4.3 in \cite{Bdv-Lovadina-Vacca:2018}) shows that if the exact velocity solution $\uu \in [\Pk_k(\Omega)]^2$ and the trilinear form $\cconvh(\cdot; \, \cdot, \cdot)$ or the trilinear form $\croth(\cdot; \, \cdot, \cdot)$ are adopted in  \eqref{eq:ns virtual}, the corresponding schemes provide a higher order approximation errors, that are respectively
\begin{gather*}
\|\uu - \uu_h \|_{\VV} \leq C \, h^{k+2} \, \left \| (\Gr \uu) \uu \right\|_{k+1} + C \, h^{k+2} \, \|\ff\|_{k+1} \,,
\\
\|\uu - \uu_h \|_{\VV} \leq C \, h^{k+2} \, \left \| (\cc \, \uu) \times \uu \right\|_{k+1} + C \, h^{k+2} \, \|\ff\|_{k+1} \,.
\end{gather*}
These are to be compared with the error of standard inf-sup stable
Finite Elements, that in the same situations would be $O(h^k)$
due to the pressure contribution to the velocity error.
\end{remark}
\begin{remark}\label{rem:reduced}
Another interesting aspect related to method \eqref{eq:ns virtual}
is the so called ``reduced'' version. Noting that the discrete
solution satisfies ${\rm div}\,\uu_h=0$, one can immediately
set to zero all $\mathbf{D_V4}$ degrees of freedom, and correspondingly
eliminate also the associated (locally zero average) discrete pressures.
The resulting equivalent scheme has much less internal-to-element
velocity DoFs and only piecewise constant pressures (we refer to
\cite{Stokes:divfree,Bdv-Lovadina-Vacca:2018} for more details).
\end{remark}

%----------------------------------------------------------------------
\section{Virtual elements Stokes complex and $\CC$ formulation}
\label{sec:5}
%% ----------
In the present part we present the VEM stream function space and the associated Stokes Complex.

%%%
\subsection{Virtual element space of the stream functions}
\label{sub:5.1}
%%%

The aim of the present section is to introduce a suitable virtual space $\Phi_h$ approximating the continuous space of the stream functions $\Phi$ defined in \eqref{eq:Phi}, such that 
\begin{equation}
\label{eq:Phi-Phi_h}
\CC \, \Phi_h = \ZZ_h \,.
\end{equation}
In particular this will allow to exploit the kernel formulation \eqref{eq:nsvirtual ker} in order to define an equivalent VEM approximation for the Navier--Stokes equation in  $\CC$ form (cf. \eqref{eq:ns variazionale curl}).
Note that a related approach, but limited to a lowest order case and suitable  only for the Stokes problem, was presented in \cite{Antonietti-BeiraodaVeiga-Mora-Verani:20XX}.

In order to construct the space of the virtual stream functions $\Phi_h$, we proceed step by step, following the enhanced technique used in Subsection \ref{sub:4.2}.
%Following in a rather slavish way the procedure applied in Subsection \ref{sub:4.2},  
On each element $E \in \Omega_h$ 
we consider the enlarged local virtual space:
%\begin{equation}
%\label{eq:Xi_h}
%\Xi_h^E := \biggl\{  
%\phi \in H^2(E) \quad \text{s.t.} \quad
%\phi_{|\partial E} \in \B_{k+1}(\partial E)\, ,\quad
%(\nabla \phi)_{|\partial E} \in [\B_{k}(\partial E)]^2\, , \quad
%\Delta^2  \phi \in \Pk_{k-3}(E) 
%\biggr\}\,.
%\end{equation}
%
\begin{equation}
\label{eq:Psi_h}
%\label{eq:enalrged}
\Psi_h^E := \biggl\{  
\phi \in H^2(E) \quad \text{s.t.} \quad
\phi_{|\partial E} \in \B_{k+1}(\partial E)\, ,\quad
(\nabla \phi)_{|\partial E} \in [\B_{k}(\partial E)]^2\, , \quad
\Delta^2  \phi \in \Pk_{k-1}(E) 
\biggr\} \,.
\end{equation}
Then we define the enhanced space of the stream functions
\begin{equation}
\label{eq:Phi_h}
%\label{eq:enalrged}
\Phi_h^E := \biggl\{  
\phi \in \Psi_h^E \quad \text{s.t.} \, \, \,
\left(\CC \phi - \Pi^{\nabla,E}_k (\CC \phi), \, \mathbf{x}^{\perp} \, \widehat{p}_{k-1} \right)_{E} = 0 \quad \text{for all $\widehat{p}_{k-1} \in  \widehat{\Pk}_{k-1 \setminus k-3}(E)$}
\biggr\}\,.
\end{equation}
It is straightforward to see that $\Pk_{k+1}(E) \subseteq \Phi_h^E$.
We are now ready to introduce a suitable set of degrees of freedom for the local space of virtual stream functions $\Phi_h^E$.
Given a function $\phi \in \Phi_h^E$, 
we take the following linear operators $\mathbf{D_{\Phi}}$, split into five subsets (see Figure \ref{fig:dofslocphi})
\begin{itemize}
\item $\mathbf{D_{\Phi}1}$:  the values of $\phi$ at the vertexes of the polygon $E$,
\item $\mathbf{D_{\Phi}2}$:  the values of $\gr \phi$ at the vertexes of the polygon $E$,
\item $\mathbf{D_{\Phi}3}$: the values of $\phi$ at $k-2$ distinct points of every edge $e \in \partial E$,
\item $\mathbf{D_{\Phi}4}$: the values of $\frac{\partial \phi}{\partial n}$ at $k-1$ distinct points of every edge $e \in \partial E$,
\item $\mathbf{D_{\Phi}5}$: the moments of $\CC \, \phi$ 
\[
\int_E \CC \, \phi \cdot \mathbf{x}^{\perp}\, p_{k-3}  \, {\rm d}E \qquad \text{for all $p_{k-3} \in \Pk_{k-3}(E)$.}
\] 
\end{itemize}

\begin{figure}[!h]
\center{
\includegraphics[scale=0.25]{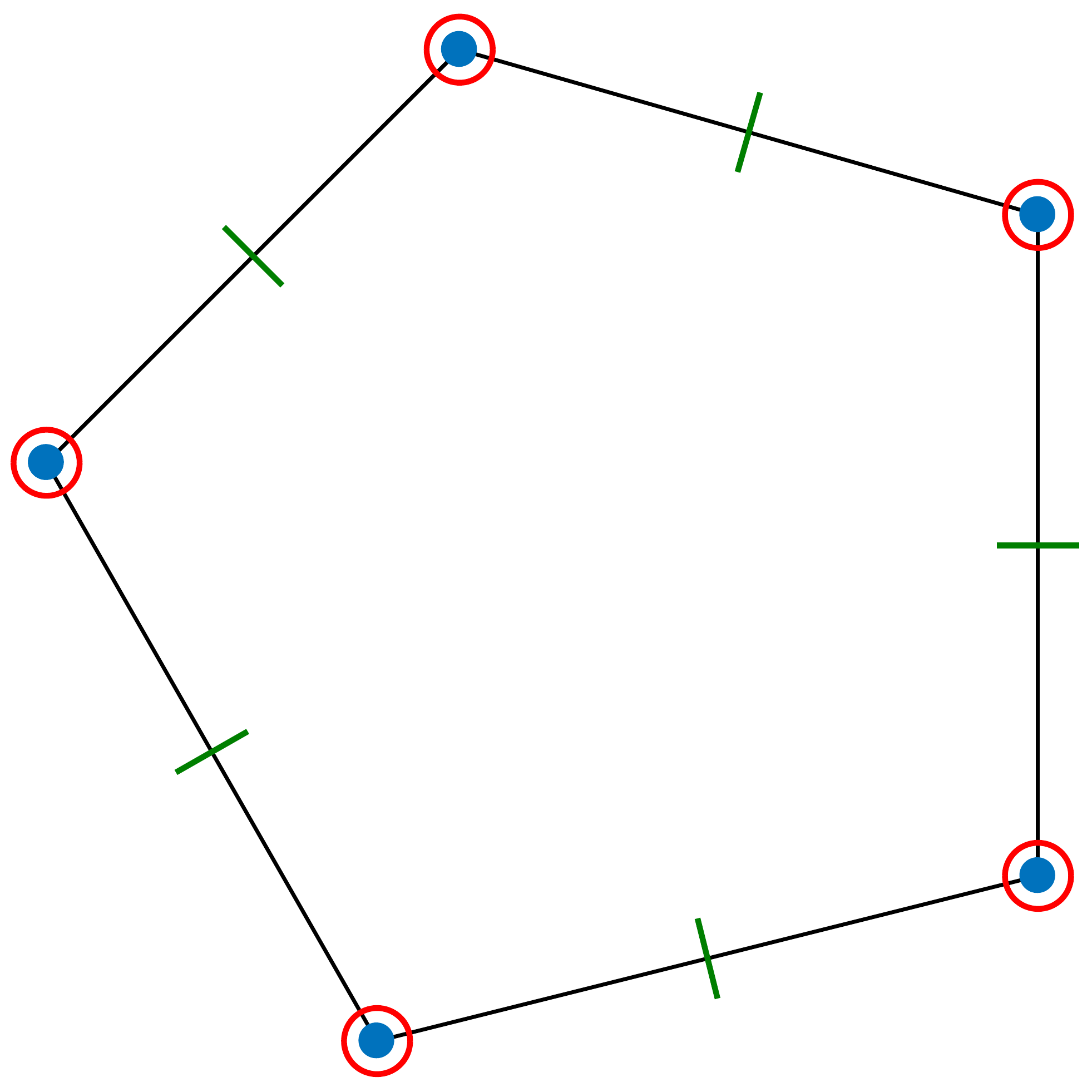} 
\qquad \qquad \qquad
\includegraphics[scale=0.25]{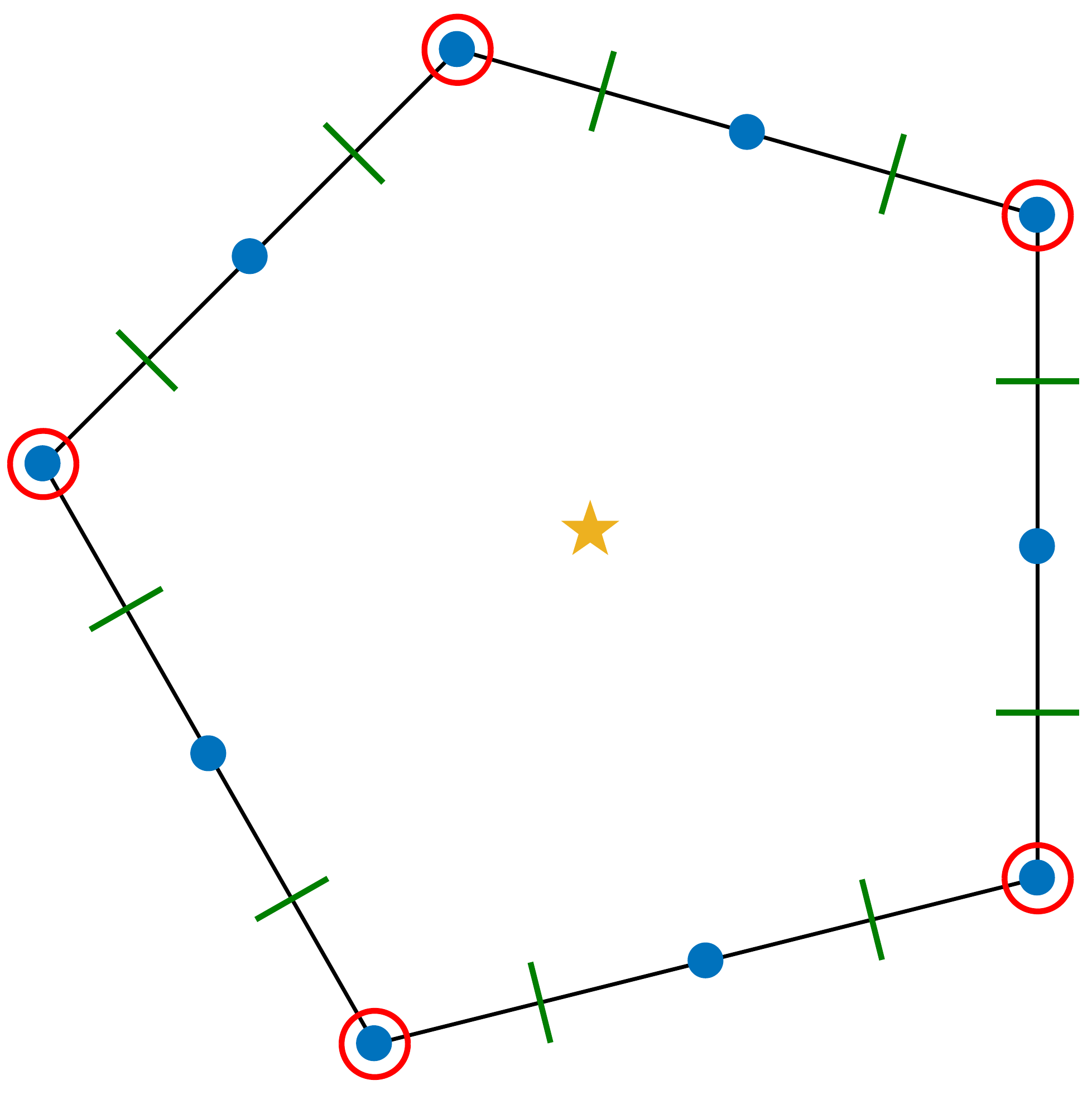}
\caption{Degrees of freedom for $k=2$ (left), $k=3$ (right). We denote 
$\mathbf{D_{\Phi}1}$ with blue dots, 
$\mathbf{D_{\Phi}2}$ with red circles, 
$\mathbf{D_{\Phi}3}$ with blued dots, 
$\mathbf{D_{\Phi}4}$ with green lines,
$\mathbf{D_{\Phi}5}$ with yellow stars.}
\label{fig:dofslocphi}
}
\end{figure}

We note that the linear operators $\mathbf{D_{\Phi}1}$ and $\mathbf{D_{\Phi}2}$ are always
needed to enforce the $C^1$-continuity at the vertices.
Moreover it is immediate to check that for any stream function $\phi \in \Phi_h^E$
(the same holds for  $\Psi_h^E$), 
the linear operator evaluations of $\mathbf{D_{\Phi}1}$,
$\mathbf{D_{\Phi}2}$, $\mathbf{D_{\Phi}3}$, $\mathbf{D_{\Phi}4}$ uniquely determine  
$\phi$ and its gradient on  $\partial E$.
We now prove the following result.
\begin{proposition}
The linear operators $\mathbf{D_{\Phi}}$ are a unisolvent set of degrees of freedom for the
virtual space of stream functions $\Phi_h^E$ and the dimension of $\Phi_h^E$ is
\begin{equation}
\label{eq:dimension_Phi}
\dim\left(\Phi_h^E \right) =  2 \, n_E \, k + \frac{(k-1)(k-2)}{2}  
\end{equation} 
where as usual $n_E$ denotes the number of edges of the polygon $E$.
\end{proposition}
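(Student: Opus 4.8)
The plan is to follow the standard two–step VEM route: first compute $\dim\Phi_h^E$ and check it equals the number of operators in $\mathbf{D_{\Phi}}$, and then prove that the map $\phi\mapsto\mathbf{D_{\Phi}}(\phi)$ is injective, whence unisolvence follows from the matching dimensions. For the dimension, I would start from the enlarged space $\Psi_h^E$. The admissible boundary data of a function $\phi\in\Psi_h^E$ is the pair $(\phi_{|\partial E},\partial_n\phi_{|\partial E})$, piecewise in $\Pk_{k+1}$ and $\Pk_k$ on the edges and glued $C^1$ at the $n_E$ vertices; counting $(k+2)+(k+1)$ scalar parameters per edge and subtracting the $3$ conditions per vertex coming from continuity of $\phi$ and of $\nabla\phi$, this data spans a space of dimension $2n_E k$. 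By well–posedness of the biharmonic Dirichlet problem on $E$, the function $\phi$ is uniquely recovered from this boundary data together with the interior datum $\Delta^2\phi\in\Pk_{k-1}(E)$, so that $\dim\Psi_h^E=2n_E k+\dim\Pk_{k-1}(E)=2n_E k+\tfrac{k(k+1)}{2}$.

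Next I would show the enhancement in \eqref{eq:Phi_h} cuts the dimension by exactly $\dim\widehat{\Pk}_{k-1\setminus k-3}(E)=2k-1$. Writing $L(\phi):=\big(\CC\phi-\PN(\CC\phi),\,\mathbf{x}^{\perp}\widehat p_{k-1}\big)_E$, the space $\Phi_h^E$ is $\ker L$, and $L$ is onto $\R^{2k-1}$: the projection $\PN(\CC\phi)$ is computable from the boundary data and the low moments $\mathbf{D_{\Phi}}5$ alone, while the raw moments $(\CC\phi,\mathbf{x}^{\perp}\widehat p_{k-1})_E$ can be prescribed freely inside $\Psi_h^E$, so varying the latter moves $L(\phi)$ surjectively. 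Hence $\dim\Phi_h^E=2n_E k+\tfrac{k(k+1)}{2}-(2k-1)=2n_E k+\tfrac{(k-1)(k-2)}{2}$, which is precisely the cardinality of $\mathbf{D_{\Phi}}$ (the operators $\mathbf{D_{\Phi}}1$–$\mathbf{D_{\Phi}}4$ contribute $2n_E k$, and $\mathbf{D_{\Phi}}5$ adds $\dim\Pk_{k-3}(E)=\tfrac{(k-1)(k-2)}{2}$).

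For unisolvence it then suffices to prove that $\phi\in\Phi_h^E$ with $\mathbf{D_{\Phi}}(\phi)=0$ is zero, and I would transfer this to the velocity kernel via $\vv:=\CC\phi$. One checks $\vv\in\ZZ_h^E$ directly: $\dd\,\vv=0$; $\vv_{|\partial E}=\CC\phi_{|\partial E}\in[\B_k(\partial E)]^2$ since $(\nabla\phi)_{|\partial E}\in[\B_k(\partial E)]^2$; the momentum condition $-\dl\vv-\nabla s\in\mathbf{x}^{\perp}\Pk_{k-1}(E)$ follows by applying $\cc$ (which turns $-\dl\vv$ into $\Delta^2\phi\in\Pk_{k-1}(E)$) and invoking Remark \ref{fact1} to write $\Delta^2\phi=\cc(\mathbf{x}^{\perp}q_{k-1})$, so that $-\dl\vv-\mathbf{x}^{\perp}q_{k-1}$ is curl–free hence a gradient on the simply connected $E$; and the enhancement defining $\vv$ is identical to the one in \eqref{eq:Phi_h}. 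Now the vanishing of $\mathbf{D_{\Phi}}1$–$\mathbf{D_{\Phi}}4$ forces $\phi_{|\partial E}=0$ and $(\nabla\phi)_{|\partial E}=0$, hence $\vv_{|\partial E}=0$ and thus $\mathbf{D_V}1(\vv)=\mathbf{D_V}2(\vv)=0$; the vanishing of $\mathbf{D_{\Phi}}5$ is exactly $\mathbf{D_V}3(\vv)=0$; and $\mathbf{D_V}4(\vv)=0$ automatically because $\dd\,\vv=0$. By the unisolvence of $\mathbf{D_V}$ on $\VV_h^E$ established in \cite{Bdv-Lovadina-Vacca:2018}, applied to its divergence–free subspace $\ZZ_h^E$, we get $\vv=\CC\phi=0$, so $\phi$ is constant on $E$; finally $\mathbf{D_{\Phi}}1(\phi)=0$ fixes this constant to zero and $\phi=0$.

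The delicate point is the bookkeeping around the enhancement together with the one–dimensional kernel of $\CC$. One must verify carefully that $\PN(\CC\phi)$ genuinely depends only on the data retained by $\mathbf{D_{\Phi}}$ (so that $L$ is onto and the dimension drops by exactly $2k-1$), and one must reconcile the count $\#\{\mathbf{D_V}1$–$\mathbf{D_V}3\}=\dim\ZZ_h^E+1$ from \eqref{eq:dimensione Z_h^E} with the argument above: the extra unit is accounted for by the global relation $\int_{\partial E}\vv\cdot\nn=0$, which any $\vv=\CC\phi$ satisfies identically, and is precisely the degree of freedom that $\mathbf{D_{\Phi}}1$ restores when passing back from $\vv$ to $\phi$. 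The biharmonic well–posedness used for $\dim\Psi_h^E$ and the transfer of unisolvence through $\CC$ are the two facts on which the whole argument rests.
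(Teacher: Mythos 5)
Your proposal is correct in substance and follows the paper's overall skeleton (pass through the enlarged space $\Psi_h^E$, count dimensions, then prove injectivity of the DoF map), but both halves run along genuinely different lines. For $\dim \Psi_h^E$ the paper does not invoke biharmonic well-posedness directly: it first shows, by integration by parts and Remark \ref{fact1}, that the curl moments $\mathbf{D_{\Psi}5}+\mathbf{D_{\Phi}5}$ are equivalent to the standard internal moments $\int_E \phi \, q_{k-1}\,{\rm d}E$, and then cites Proposition 4.1 of \cite{Brezzi-Marini:2012}, which packages exactly the boundary-count-plus-biharmonic argument you sketch. For injectivity the paper stays inside $\Psi_h^E$: from $\mathbf{D_{\Phi}}(\phi)=0$ it gets $\CC\phi=\mathbf{0}$ on $\partial E$, computes $\PN(\CC\phi)=0$ by splitting $\dl \mathbf{p}_k = \gr q_{k-1} + \mathbf{x}^{\perp} p_{k-3}$, and then uses the enhancement constraint in \eqref{eq:Phi_h} to conclude that the $\mathbf{D_{\Psi}5}$ moments vanish too, whence $\phi=0$ by unisolvence on $\Psi_h^E$. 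You instead transfer the problem to the velocity space: you verify $\vv=\CC\phi\in\ZZ_h^E$ (thereby anticipating a large part of Proposition \ref{prp:stream}), check that all $\mathbf{D_V}$ values of $\vv$ vanish, and quote the known unisolvence of $\mathbf{D_V}$ on $\VV_h^E$ from \cite{Bdv-Lovadina-Vacca:2018}. This is legitimate and arguably more economical, since it outsources the hard computation to the earlier paper and causes no circularity (you only use the inclusion $\CC\,\Phi_h^E\subseteq\ZZ_h^E$, not the equality); what the paper's in-house argument buys is self-containedness and, as a byproduct, the facts ($\PN(\CC\phi)$ computable from $\mathbf{D_{\Phi}}$, enhancement controlling $\mathbf{D_{\Psi}5}$) that are reused in Remark \ref{rm:dofs}.

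One step of your dimension count is under-justified — though, fortunately, dispensable. Asserting that the raw moments $\bigl(\CC\phi,\,\mathbf{x}^{\perp}\widehat p_{k-1}\bigr)_E$ ``can be prescribed freely'' while the $\mathbf{D_{\Phi}}$ data are held fixed is precisely the statement that $\{\mathbf{D_{\Phi}},\,\mathbf{D_{\Psi}5}\}$ is a unisolvent DoF set for $\Psi_h^E$; your biharmonic computation yields the \emph{dimension} of $\Psi_h^E$ but not the independence of these particular functionals — for that one needs the integration-by-parts equivalence between the curl moments and the internal moments $\int_E\phi\, q_{k-1}\,{\rm d}E$, which is exactly what the paper establishes first. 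However, surjectivity of the constraint map $L$ is not needed at all: the trivial bound $\dim\Phi_h^E \geq \dim\Psi_h^E - (2k-1)$, combined with your (complete) injectivity argument, already forces equality of dimensions and unisolvence simultaneously, which is the route the paper takes in \eqref{eq:dofs1}. Relatedly, you flag the computability of $\PN(\CC\phi)$ from $\mathbf{D_{\Phi}}$ as the delicate point, but that part is routine (testing against $\gr q_{k-1}$ reduces to a boundary term because $\dd\,\CC\phi=0$, and testing against $\mathbf{x}^{\perp}p_{k-3}$ is $\mathbf{D_{\Phi}5}$); the genuinely delicate point is the free-prescription claim just discussed.
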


\begin{proof}
We preliminary prove that
the linear operators $\mathbf{D_{\Phi}}$ plus 
the additional moments of $\CC \, \phi$ 
\[
\mathbf{D_{\Psi}5} :  \qquad 
\int_E \CC \, \phi \cdot \mathbf{x}^{\perp} \, \widehat{p}_{k-1}  \, {\rm d}E \qquad \text{for all $\widehat{p}_{k-1} \in \widehat{\Pk}_{k-1 \setminus k-3}(E)$} 
\]
constitute a set of degrees of freedom for $\Psi_h^E$. 
An integration by parts and recalling Remark \ref{fact1} imply that the linear operators $\mathbf{D_{\Psi}5} + \mathbf{D_{\Phi}5}$ are equivalent to prescribe the moments
$\int_E \phi \, q_{k-1} \, {\rm d}E$ for all $q_{k-1} \in \Pk_{k-1}(E)$.
Indeed, Remark \ref{fact1} and simple computations give
\begin{equation*}
\begin{split}
\int_E \phi \, q_{k-1}  \, {\rm d}E &= 
\int_E \phi \,  \cc ( \mathbf{x}^{\perp}\, p_{k-1}) \, {\rm d}E = 
\int_E \left(\CC \, \phi \right) \cdot \mathbf{x}^{\perp}\, p_{k-1}  \, {\rm d}E 
- \sum_{e \in \partial E} \int_{e} \phi \, \mathbf{x}^{\perp}\, p_{k-1} \cdot \mathbf{t}_e\, {\rm d}s \,,
\end{split}
\end{equation*}
where the boundary integral is computable using the DoFs values.
Now the assertion easily follows by a direct application of Proposition 4.1 in
\cite{Brezzi-Marini:2012}.
In particular, from \eqref{eq:dimensions1} it holds that
\begin{equation}
\label{eq:dimension_Psi}
\dim \left( \Psi_h^E \right) = 2 \, n_E \, k + \frac{k(k+1)}{2} \,.
\end{equation}
The next step is to prove that the linear operators $\mathbf{D_{\Phi}}$ 
are unisolvent for $\Phi_h^E$.
From \eqref{eq:dimensions1} it holds
\[
\dim \left( \widehat{\Pk}_{k-1 \setminus k-3}(E)  \right) =
\dim \left( \Pk_{k-1}(E) \right) -
\dim \left( \Pk_{k-3}(E)\right) 
= 2 \, k - 1 \,.
\]
Hence, neglecting the independence of the additional $(2\,k - 1)$
conditions in \eqref{eq:Phi_h}, 
the dimension of $\Phi_h^E$ is bounded from below by
\begin{equation}
\label{eq:dofs1}
\dim \left( \Phi_h^E \right) \geq 
\dim \left( \Psi_h^E \right) - (2\,k - 1) = 2 \, n_E \, k 
+ \frac{(k-1)(k-2)}{2} = \text{number of DoFs  $\mathbf{D_{\Phi}}$}.
\end{equation}
Due to \eqref{eq:dofs1}, the proof is concluded if we
 show that a function $\phi \in \Phi_h^E$
such that $\mathbf{D_{\Phi}}(\phi) = 0$ is identically zero.
In such case, 
$\phi = 0$ and  $\nabla \phi = \mathbf{0}$ on the skeleton $\partial E$ and this entails 
$\CC \, \phi = \mathbf{0}$ on $\partial E$. 
Moreover we note that in this case the 
$\Pi^{\nabla,E}_k (\CC \, \phi) = 0$; as a matter of fact, by definition \eqref{eq:Pn_k^E}, we get
\[
\int_E \Gr (\CC \, \phi) : \Gr \mathbf{p}_k \, {\rm d}E 
= - \int_E \CC \, \phi \cdot \dl \mathbf{p}_k \, {\rm d}E 
+ \int_{\partial E} \CC \, \phi \cdot \Gr \mathbf{p}_k 	\, \mathbf{n}_E \, {\rm d}S \,.
\]
The boundary integral is zero being $\CC \, \phi = \mathbf{0}$ on the skeleton $\partial E$. 
For the internal integral, in the light of \eqref{eq:pkgkgkp}, let us set 
$\dl \mathbf{p}_k = \gr q_{k-1} + \mathbf{x}^{\perp} \, p_{k-3}$ with
$q_{k-1} \in \Pk_{k-1}(E) \setminus \R$.
% and $\mathbf{g}^{\oplus}_{k-2} \in \mathcal{G}^{\oplus}_{k-2}(E)$, 
Then we infer 
\[
\begin{split}
\int_E \Gr (\CC \, \phi) : \Gr \mathbf{p}_k \, {\rm d}E 
&= 
-  \int_E \CC \, \phi  \, \cdot   \gr q_{k-1}  \, {\rm d}E 
-  \int_E \CC \, \phi \cdot   \mathbf{x}^{\perp} \, p_{k-3}  \, {\rm d}E 
\\
&=
\sum_{e \in \partial E} \int_e \phi \, \frac {\partial q_{k-1}}{\partial t} \, {\rm d}S
-  \int_E \CC \, \phi \cdot   \mathbf{x}^{\perp} \, p_{k-3}  \, {\rm d}E 
= 0 \, ,
\end{split}
\]
where the boundary integral is zero since $\phi = 0$ on $\partial E$,
whereas the second term is zero since   $\mathbf{D_{\Phi}5}(\phi)= 0$.
In particular we proved that, since $\Pi_k^{\nabla, E}( \CC \, \phi) =0$, 
recalling \eqref{eq:Phi_h} also the moments $\mathbf{D_{\Psi}5}$ of $\phi$ are zero. 
Since $\mathbf{D_{\Phi}}(\phi) = 0$ by assumption, recalling that 
$\phi \in \Phi_h^E \subset \Psi_h^E$ and that 
$\{\mathbf{D_{\Phi}}, \,  \mathbf{D_{\Psi}5}\}$ are a set of
degrees of freedom for $\Psi_h^E$, it follows $\phi = 0$.

%The argument in the first part  of the proof implies that $\phi$ is zero.
%
%Therefore, from \eqref{eq:dofs1}, we obtain that the dimension of $\Phi_h^E$
%is actually the same of number of DoFs $\mathbf{D_{\Phi}}$, i.e. the \eqref{eq:dimension_Psi},
%and that the DoFs $\mathbf{D_{\Phi}}$ are unisolvent for $\Phi_h^E$.
\end{proof}
\begin{remark}
\label{rm:brezzi marini}
An alternative way to define a unisolvent set of DoFs for the
space $\Phi_h^E$ is to provide in the place of $\mathbf{D_{\Phi}5}$ 
the following  operators \cite{Brezzi-Marini:2012}
\begin{itemize}
\item $\widetilde{\mathbf{D_{\Phi}5}}$ : the moments of $\phi$
against the polynomial of degree up to degree $k-3$
\begin{equation}
\label{eq:moments}
\int_E \phi \, p_{k-3}  \, {\rm d}E \qquad \text{for all $p_{k-3} \in \Pk_{k-3}(E)$,}
\end{equation}
\end{itemize}
but such choice is less suitable for the exact sequence construction of the present work.

\end{remark}
The global virtual space $\Phi_h$  is obtained by combining the local spaces
$\Phi_h^E$ accordingly to the local degrees of freedom,
taking into account the boundary conditions:
\begin{equation}
\label{eq:Phi_h_global}
\Phi_h := \biggl\{  
\phi \in \Phi \quad \text{s.t.} \quad
\phi_{| E} \in \Phi_h^E  \quad \text{for all $E\in \Omega_h$}
\biggr\}\,.
\end{equation}
The dimension is
$
\dim\left(\Phi_h \right) =  3 \, n_V + (2 \, k - 3) n_e + n_P \, \frac{(k-1)(k-2)}{2}  \,,
$
where $n_P$ (resp., $n_e$ and $n_V$) is the number of elements
(resp., internal edges and vertexes) in the decomposition $\Omega_h$.

% --------------------------------------------------------------

\subsection{Virtual element Stokes complex}
\label{sub:5.1bis}

The aim of the present subsection is to provide a virtual element counterpart of the 
continuous Stokes complex \cite{guzman-neilan:2014}:
\begin{equation}
\label{eq:exact_cont}
0 \, \xrightarrow[]{\, \, \, \,\quad \text{{$i$}} \quad \, \, \, \,} \,
H_0^2(\Omega) \, \xrightarrow[]{\, \quad \text{{$\CC$}} \quad \,}\,
[H_0^1(\Omega)]^2 \, \xrightarrow[]{\, \quad \text{{$\dd$}} \quad \,} 
L_0^2(\Omega) \, \xrightarrow[]{\, \, \quad \text{{$0$}} \quad \, \,} 
0 \,,
\end{equation}
where $i$  denotes the mapping that to every real number $r$
associates the constant function identically equal to $r$
and we recall that a sequence is exact if the image of
each operator coincides with the kernel of the following one. 
The case without boundary conditions is handled analogously.

We start by characterizing the space $\Phi_h^E$ as the
space of the stream functions associated to the discrete kernel $\ZZ_h^E$.
\begin{proposition}
\label{prp:stream}
For any $E \in \Omega_h$ let $\ZZ_h^E$ and $\Phi_h^E$ be the
spaces defined in \eqref{eq:Z_h^E} and \eqref{eq:Phi_h}, respectively.
Then, it holds that
\[
\CC \, \Phi_h^E = \ZZ_h^E \,.
\]
\end{proposition}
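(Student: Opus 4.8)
The plan is to establish the two inclusions $\CC\,\Phi_h^E \subseteq \ZZ_h^E$ and $\ZZ_h^E \subseteq \CC\,\Phi_h^E$ separately, relying on three elementary differential identities that follow directly from the definitions of $\CC$ and $\cc$: for any sufficiently regular scalar $\phi$ one has $\dd\,\CC\phi = 0$, $\cc\,\CC\phi = -\Delta\phi$, and $\dl\,\CC\phi = \CC(\Delta\phi)$ (all understood in the distributional sense, since a priori $\Delta\phi$ is only in $L^2(E)$). The remaining ingredients are Remark \ref{fact1}, which makes $\cc$ an isomorphism from $\mathbf{x}^{\perp}\,\Pk_{k-1}(E)$ onto $\Pk_{k-1}(E)$, and the simple connectedness of $E$, which guarantees that every divergence-free $\mathbf{v}\in[H^1(E)]^2$ admits a stream function $\phi\in H^2(E)$ with $\CC\phi=\mathbf{v}$, unique up to an additive constant.

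For the forward inclusion I would take $\phi\in\Phi_h^E$ and set $\mathbf{v}:=\CC\phi$, then check each defining condition of $\ZZ_h^E$ in \eqref{eq:Z_h^E}. Membership in $[H^1(E)]^2$ and the divergence-free property are immediate; the boundary condition $\mathbf{v}_{|\partial E}\in[\B_k(\partial E)]^2$ follows because $\CC\phi$ is a pointwise rotation of $\nabla\phi$, whose boundary trace lies in $[\B_k(\partial E)]^2$ by definition of $\Psi_h^E$; and the enhancement constraint is literally the one imposed in \eqref{eq:Phi_h}, since $\mathbf{v}-\PN\mathbf{v}=\CC\phi-\PN(\CC\phi)$. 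The only substantial point is the Stokes-type PDE condition. Here I would use $\Delta^2\phi\in\Pk_{k-1}(E)$: by Remark \ref{fact1} there is a unique $p_{k-1}\in\Pk_{k-1}(E)$ with $\Delta^2\phi=\cc(\mathbf{x}^{\perp}p_{k-1})$, and then $\cc\bigl(-\CC(\Delta\phi)-\mathbf{x}^{\perp}p_{k-1}\bigr)=\Delta^2\phi-\Delta^2\phi=0$, so the curl-free field $-\dl\mathbf{v}-\mathbf{x}^{\perp}p_{k-1}$ is a gradient $\nabla s$ with $s\in L^2(E)$. Rearranging gives $-\dl\mathbf{v}-\nabla s=\mathbf{x}^{\perp}p_{k-1}\in\mathbf{x}^{\perp}\Pk_{k-1}(E)$, which is exactly what \eqref{eq:Z_h^E} requires, so $\mathbf{v}\in\ZZ_h^E$.

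For the reverse inclusion I would start from $\mathbf{v}=(v_1,v_2)\in\ZZ_h^E$, invoke simple connectedness to obtain its stream function $\phi\in H^2(E)$ with $\CC\phi=\mathbf{v}$, and verify $\phi\in\Phi_h^E$. The identity $\nabla\phi=(-v_2,v_1)$ shows $(\nabla\phi)_{|\partial E}\in[\B_k(\partial E)]^2$, and integrating the tangential component along each straight edge raises the degree by one, giving $\phi_{|\partial E}\in\B_{k+1}(\partial E)$ (continuity across vertices coming from $\phi\in H^2(E)\subset C^0$). The condition $\Delta^2\phi\in\Pk_{k-1}(E)$ is recovered by applying $\cc$ to the relation $-\dl\mathbf{v}-\nabla s=\mathbf{x}^{\perp}p_{k-1}$ satisfied by $\mathbf{v}$, which yields $\Delta^2\phi=\cc(\mathbf{x}^{\perp}p_{k-1})\in\Pk_{k-1}(E)$; finally the enhancement constraint transfers verbatim, as in the forward direction. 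Thus $\phi\in\Phi_h^E$ and $\mathbf{v}\in\CC\,\Phi_h^E$.

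I expect the main obstacle to be the low-regularity bookkeeping around the PDE condition: $\Delta\phi$ is only an $L^2$ function with polynomial bi-Laplacian, so $\dl\mathbf{v}=\CC(\Delta\phi)$ and the recovered pressure $s$ must be handled distributionally, using that a curl-free $H^{-1}$ field on a simply connected domain is an $L^2$-gradient. Everything else reduces to degree counting on $\partial E$ and to the algebraic identities for $\CC$ and $\cc$, so once the distributional de~Rham step is set up cleanly the two inclusions close immediately.
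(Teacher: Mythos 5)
Your proof is correct, and the forward inclusion $\CC\,\Phi_h^E \subseteq \ZZ_h^E$ coincides with the paper's argument step by step: divergence-free and boundary-trace checks, the curl/de Rham step producing the pressure $s$ via Remark \ref{fact1} and the simple connectedness of $E$, and the verbatim transfer of the enhancement constraint. Where you genuinely diverge is the reverse inclusion: the paper never verifies $\ZZ_h^E \subseteq \CC\,\Phi_h^E$ directly, but instead closes with a dimension count, $\dim(\CC\,\Phi_h^E) = \dim(\Phi_h^E) - 1 = 2n_E k + \tfrac{(k-1)(k-2)}{2} - 1 = \dim(\ZZ_h^E)$, using the unisolvence result \eqref{eq:dimension_Phi} proved just before and the known dimension \eqref{eq:dimensione Z_h^E} of the local kernel. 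Your direct construction --- lifting $\vv \in \ZZ_h^E$ to a stream function $\phi \in H^2(E)$, observing $\nabla\phi = (-v_2, v_1)$ so the gradient trace lies in $[\B_k(\partial E)]^2$, raising the degree by one when integrating the tangential component along each straight edge (with vertex continuity from $H^2(E) \subset C^0(\overline{E})$ in 2D), and recovering $\Delta^2\phi \in \Pk_{k-1}(E)$ by applying $\cc$ to the momentum condition --- is sound, provided the distributional bookkeeping you flag is carried out (a curl-free $H^{-1}$ field on a simply connected domain is an $L^2$-gradient, and conversely $\cc\,\nabla s = 0$ distributionally for $s \in L^2(E)$); the paper glosses over exactly the same regularity point in its own forward step. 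What each approach buys: the paper's dimension count is shorter and sidesteps all trace and regularity checks, but it depends on the two dimension formulas and is non-constructive; your argument is self-contained and exhibits the stream function of every kernel element explicitly, at the cost of the low-regularity de Rham steps. One harmless discrepancy worth noting: with the paper's sign conventions one has $\cc\,\dl(\CC\,\phi) = -\Delta^2\phi$, so your signs (e.g., $\cc\,\CC\,\phi = -\Delta\phi$) are the consistent ones, whereas the paper's proof writes $\cc\,\dl(\CC\,\phi_h) = \Delta^2\phi_h$; the conclusion is unaffected in either case since $\Pk_{k-1}(E)$ and the set of gradients are invariant under sign change.
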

\begin{proof}
For any $\phi_h \in \Phi_h^E$, we show that the function $\vv_h : = \CC \, \phi_h \in \ZZ_h^E$.
First of all, it is straightforward to check that
\begin{equation}
\label{eq:cond0}
\dd \, \vv_h = \dd \left(\CC \, \phi_h \right) = 0 \,.
\end{equation}
Concerning the condition on the  skeleton $\partial E$, we observe that 
${\phi_h}_{| \partial E} \in \B_{k+1}(\partial E)$ and $(\gr \phi_h)_{| \partial E} \in [\B_{k}(\partial E)]^2$ easily imply that 
\begin{equation}
\label{eq:cond1}
{\vv_h}_{| \partial E}  = (\CC \,\phi_h)_{| \partial E} \in [\B_{k}(\partial E)]^2 \,.
\end{equation}
Inside the element, by simple calculations and by definition \eqref{eq:Phi_h}, we infer
$
\cc \, \dl \vv_h  =
\cc \, \dl (\CC \, \phi_h) =  
\Delta ^2\phi_h
\in \Pk_{k-1}(E)
$.
In the light of Remark \ref{fact1}, the previous relation is equivalent to
$$
\cc \, \dl \vv_h  
\, \in \, \cc  \left( \mathbf{x}^{\perp} \, \Pk_{k-1}(E) \right) \,.
$$
Therefore, there exists $p_{k-1} \in \Pk_{k-1}(E)$
such that $\cc ( \dl \vv_h - \mathbf{x}^{\perp} \, p_{k-1}) = 0$. 
Since $E$ is simply connected, there exists $s$ such that
$\dl \vv_h - \mathbf{x}^{\perp} \, p_{k-1} = \nabla s$.
Thus we have shown that
\begin{equation}
\label{eq:cond2}
(\dl \vv_h -  \nabla \, s) \, \in \, \mathbf{x}^{\perp} \, \Pk_{k-1}(E)\,.
\end{equation}
Moreover, by definition \eqref{eq:Phi_h}, for all
$\widehat{p}_{k-1} \in  \widehat{\Pk}_{k-1 \setminus k-3}(E)$ it holds
\begin{equation}
\label{eq:cond3}
\left(\vv_h - \Pi^{\nabla,E}_k \, \vv_h, \, \mathbf{x}^{\perp} \, \widehat{p}_{k-1} \right)_{E} = 
\left(\CC \phi_h - \Pi^{\nabla,E}_k (\CC \phi_h), \, \mathbf{x}^{\perp} \, \widehat{p}_{k-1} \right)_{E} = 0 \,.
\end{equation}
At this point is clear that \eqref{eq:cond0}, \eqref{eq:cond1}, \eqref{eq:cond2},
\eqref{eq:cond3} and definition \eqref{eq:Z_h^E}, imply $\vv_h = \CC \, \phi_h \in \ZZ_h^E$ for any scalar potential $\phi_h \in \Phi_h^E$,
i.e. 
$\CC \, \Phi_h^E \subseteq \ZZ_h^E$.
The proof now follows by a dimensional argument.
In fact, from \eqref{eq:dimension_Phi} and \eqref{eq:dimensione Z_h^E} easily follows that 
\[
\dim \left(\CC(\Phi_h) \right) =
 \dim \left(\Phi_h^E \right) - 1 =
2 \,  n_E \, k + \frac{(k-2)(k-1)}{2}   - 1 = \dim \left( \ZZ_h^E \right) \,.
\]
Therefore we can conclude that $\CC \, \Phi_h^E = \ZZ_h^E$ for all $E \in \Omega_h$.
\end{proof}
\begin{remark}
\label{rm:dofs}
Given any $\phi_h \in \Phi_h^E$, from the degrees of freedom values $\mathbf{D_{\Phi}}$ of $\phi_h$,
we are able to compute the DoFs values $\mathbf{D_V}$ of $\CC \, \phi_h$.
In particular it holds that 
\begin{gather*}
\mathbf{D_V3}(\CC \, \phi_h) = \mathbf{D_{\Phi}5}(\phi_h)
\qquad
\text{and}
\qquad
\mathbf{D_V4}(\CC \, \phi_h) = 0 \,.
\end{gather*}
Therefore, for any $\phi_h \in \Phi_h^E$, the DoFs $\mathbf{D_{\Phi}}$ allow to compute
the polynomial projections
$\PN (\CC \, \phi_h)$, $\P0 (\CC \, \phi_h)$ and $\PP0\nabla(\CC \, \phi_h)$.
\end{remark}

As a consequence of Proposition~\ref{prp:stream} we have the following
Stokes exact sequence for our discrete VEM spaces and its reduced
version (see also Figures~\ref{fig:stokes_c} and \ref{fig:stokes_c_r}).
\begin{corollario}
\label{cor:exact}
Let  $\Phi_h^E$ and $\VV_h^E$  the spaces defined in  \eqref{eq:Phi_h}
and \eqref{eq:V_h^E}, respectively,
and let $\widetilde{\VV}_h^E$ denote the reduced velocity space, see Remark~\ref{rem:reduced}.
Then, the following sequences are exact 
\begin{equation}
\label{eq:exact3}
\R \, \xrightarrow[]{\, \, \, \,\quad \text{{$i$}} \quad \, \, \, \,} \,
\Phi_h^E \, \xrightarrow[]{\, \quad \text{{$\CC$}} \quad \,}\,
{\VV_h^E} \, \xrightarrow[]{\, \quad \text{{$\dd$}} \quad \,} 
{\Pk_{k-1}(E)} \, \xrightarrow[]{\, \, \quad \text{{$0$}} \quad \, \,} 
0 \,,
\end{equation}
\begin{equation}
\label{eq:exact3bis}
\R \, \xrightarrow[]{\, \, \, \,\quad \text{{$i$}} \quad \, \, \, \,} \,
\Phi_h^E \, \xrightarrow[]{\, \quad \text{{$\CC$}} \quad \,}\,
{\widetilde{\VV}_h^E} \, \xrightarrow[]{\, \quad \text{{$\dd$}} \quad \,} 
\, {\Pk_{0}(E)} \, \, \xrightarrow[]{\, \, \quad \text{{$0$}} \quad \, \,} 
0 \,.
\end{equation}
\end{corollario}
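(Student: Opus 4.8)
The plan is to verify exactness separately at each interior node of the sequence \eqref{eq:exact3}, with the whole conceptual burden carried by Proposition \ref{prp:stream}, and then to repeat the argument almost verbatim for the reduced sequence \eqref{eq:exact3bis}. I would begin with exactness at $\Phi_h^E$, i.e. $\mathrm{im}(i) = \ker(\CC|_{\Phi_h^E})$. Since $\CC \phi = (\partial \phi/\partial y,\, -\partial \phi/\partial x)$, we have $\CC \phi = \mathbf{0}$ if and only if $\nabla \phi = \mathbf{0}$, which on the connected element $E$ forces $\phi$ to be constant. Because $\Pk_{k+1}(E) \subseteq \Phi_h^E$, the constants genuinely belong to $\Phi_h^E$ and coincide with $i(\R)$, while $i$ is trivially injective; hence $\ker(\CC) = i(\R) = \R$.

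Next I would treat exactness at $\VV_h^E$, which asserts $\CC \, \Phi_h^E = \ker(\dd|_{\VV_h^E})$. Comparing the defining conditions \eqref{eq:V_h^E} and \eqref{eq:Z_h^E}, the space $\ZZ_h^E$ is precisely the divergence-free subspace $\{\vv \in \VV_h^E : \dd \, \vv = 0\}$, that is, the kernel of $\dd$ on $\VV_h^E$. Since Proposition \ref{prp:stream} already gives $\CC \, \Phi_h^E = \ZZ_h^E$, the image of $\CC$ coincides with the kernel of $\dd$, as required, with no further work.

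For exactness at $\Pk_{k-1}(E)$ I would prove that $\dd \colon \VV_h^E \to \Pk_{k-1}(E)$ is onto; since $\dd \, \VV_h^E \subseteq \Pk_{k-1}(E)$ by construction, it suffices to match dimensions. Applying the rank--nullity theorem with $\ker(\dd|_{\VV_h^E}) = \ZZ_h^E$ and the dimension formulas \eqref{eq:dimensione V_h^E} and \eqref{eq:dimensione Z_h^E}, one obtains $\dim(\dd \, \VV_h^E) = \dim \VV_h^E - \dim \ZZ_h^E = \tfrac{k(k+1)}{2} = \dim \Pk_{k-1}(E)$, whence equality of the two spaces; exactness at the terminal $0$ is automatic. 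The reduced sequence \eqref{eq:exact3bis} is then handled identically: the first two nodes are unchanged because $\ZZ_h^E \subseteq \widetilde{\VV}_h^E$ and $\CC \, \Phi_h^E = \ZZ_h^E$ is still the kernel of $\dd$ on $\widetilde{\VV}_h^E$, while surjectivity onto $\Pk_0(E)$ follows from the same count using $\dim \widetilde{\VV}_h^E = 2 n_E k + \tfrac{(k-1)(k-2)}{2}$, which yields $\dim(\dd \, \widetilde{\VV}_h^E) = 1 = \dim \Pk_0(E)$. I do not expect a real obstacle here; the only point deserving mild care is confirming that $\ZZ_h^E$ is the \emph{full} divergence-free subspace of both $\VV_h^E$ and $\widetilde{\VV}_h^E$ (immediate from the defining relations), since everything else reduces to the kernel characterization of $\CC$ and two elementary dimension counts.
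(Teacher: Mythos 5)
Your proposal is correct and follows essentially the same route as the paper, which states the corollary as an immediate consequence of Proposition~\ref{prp:stream} without writing out a proof: the middle-node exactness is exactly $\CC\,\Phi_h^E=\ZZ_h^E$ together with the observation that $\ZZ_h^E$ is the divergence-free subspace, and your remaining steps (kernel of $\CC$ equals the constants $i(\R)$ since $E$ is connected and $\Pk_{k+1}(E)\subseteq\Phi_h^E$, plus rank--nullity with the counts \eqref{eq:dimensione V_h^E} and \eqref{eq:dimensione Z_h^E} giving $\dim(\dd\,\VV_h^E)=\tfrac{k(k+1)}{2}=\dim\Pk_{k-1}(E)$, and likewise $\dim(\dd\,\widetilde{\VV}_h^E)=1$) are precisely the routine details the paper leaves implicit, mirroring the dimensional argument already used in the proof of Proposition~\ref{prp:stream}. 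The only point worth flagging is that you correctly use $\dim\Pk_{n}(E)=\tfrac{(n+1)(n+2)}{2}$ rather than the misprinted formula in \eqref{eq:dimensions1}, consistently with the counts the paper itself employs.
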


\vspace{0.5cm}

\begin{figure}[!h]
\center
{
%[grid, scale=0.35, ,unit=2mm]
\begin{overpic}[scale=0.57]{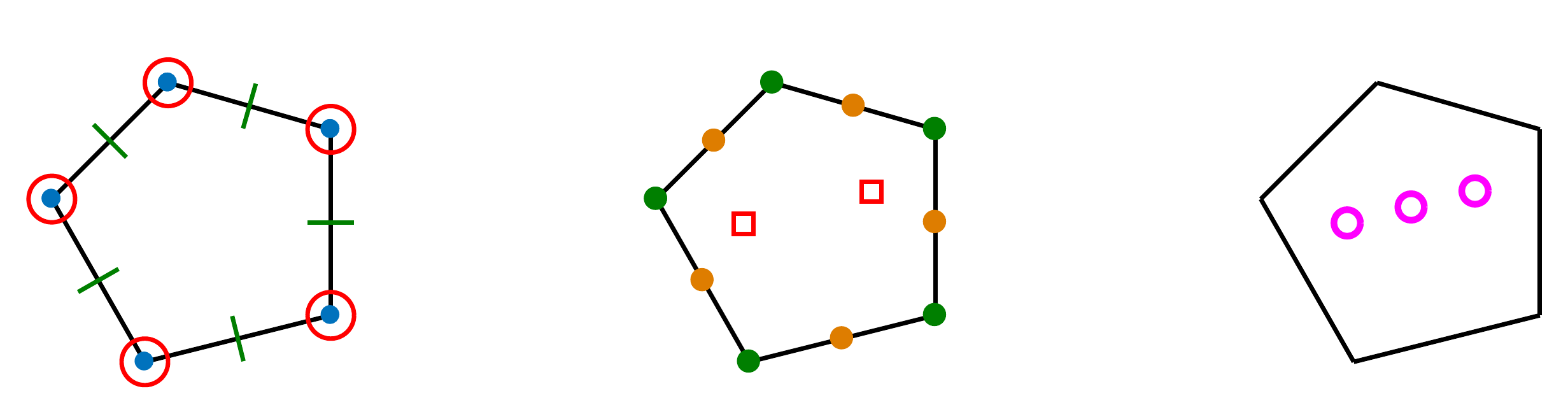} 
\put (11,23) {$\Phi_h^E$}
\put (50,23) {$\VV_h^{E}$}
\put (87,23) {$\Pk_{k-1}(E)$}
\put (25,12) {$\xrightarrow[]{\, \quad \text{{$\CC$}} \quad \,}$}
\put (65,12) {$\xrightarrow[]{\, \quad \text{{$\dd$}} \quad \,}$}
\end{overpic}
\caption{$H^2$-conforming stream virtual element $\Phi_h^E$ (left), $H^1$-conforming  virtual velocity space $\VV_h^E$ (middle), and  pressure
space $\Pk_{k-1}(E)$ (right) satisfying the exact complex \eqref{eq:exact3}.}
\label{fig:stokes_c}
}
\end{figure}

\vspace{0.5cm}

\begin{figure}[!h]
\center
{
%[grid, scale=0.35, ,unit=2mm]
\begin{overpic}[scale=0.57]{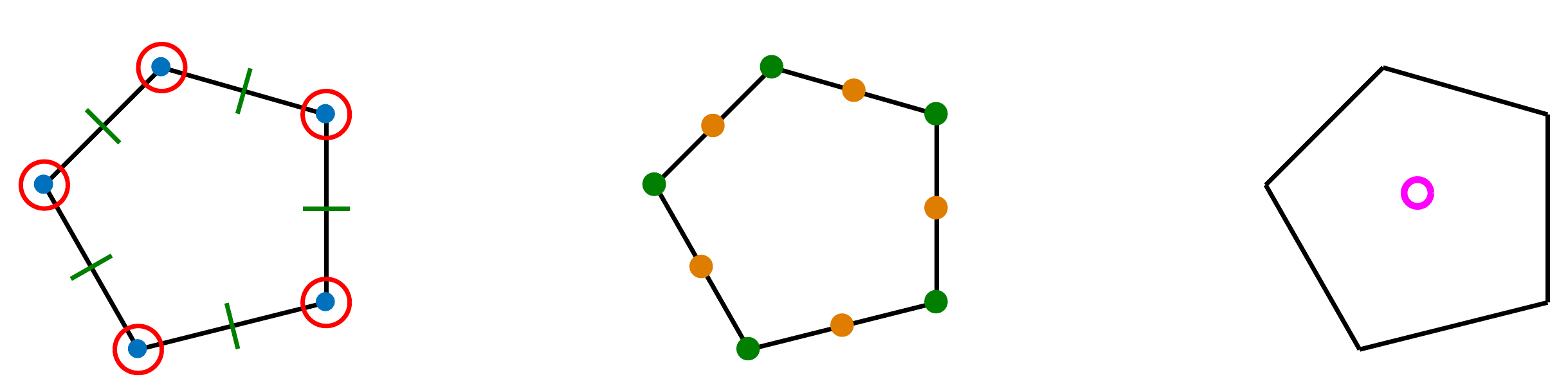} 
\put (11,23) {$\Phi_h^E$}
\put (50,23) {$\widetilde{\VV}_h^{E}$}
\put (87,23) {$\Pk_{0}(E)$}
\put (25,12) {$\xrightarrow[]{\, \quad \text{{$\CC$}} \quad \,}$}
\put (65.5,12) {$\xrightarrow[]{\, \quad \text{{$\dd$}} \quad \,}$}
\end{overpic}
\caption{Reduced version of the Virtual Element Stokes complex
% $H^2$-conforming stream virtual element $\Phi_h^E$ (left), $H^1$-conforming reduced virtual velocity space $\widetilde{\VV}_h^E$ (middle), and reduced pressure space $\Pk_{0}(E)$ (right) satisfying the exact complex \eqref{eq:exact3} 
(see Remark \ref{rem:reduced}).}
\label{fig:stokes_c_r}
}
\end{figure}

\begin{remark}
In terms of degrees of freedom, our lowest order element
(when restricted to triangles!) can be compared with the
Zienkiewicz element \cite{ciarlet:1978, guzman-neilan:2014},
all other FEM elements in the literature being either higher
order or needing a sub-element partition and more DoFs.
The reduced version of our VEM element for $k=2$
(see Remark \ref{rem:reduced} and Figure \ref{fig:stokes_c_r}) has piecewise constant
pressures and no internal degrees of freedom for velocities,
and thus in terms of degrees of freedom exactly corresponds
to the above finite element (the difference is that we use
the VE approach instead of introducing rational basis functions).
But note that the element here presented yields $O(h^2)$
convergence rate for velocities and also for the local
pressure average (full $O(h^2)$ pressure convergence can be
recovered by a local post-processing), instead of linear
convergence as \cite{guzman-neilan:2014}. 
In addition, we avoid integration of rational functions.
Clearly, this comes at the price of having a virtual formulation
and thus the absence of an explicit expression of the shape functions. 
\end{remark}

The following results are the global counterpart of
Proposition~\ref{prp:stream} and Corollary \ref{cor:exact}.
\begin{proposition}
\label{prp:stream global}
Let $\ZZ_h$ and $\Phi_h$ be the spaces defined in \eqref{eq:Zh} and
\eqref{eq:Phi_h_global}, respectively.
Then, it holds that
\[
\CC \, \Phi_h = \ZZ_h \,.
\]
\end{proposition}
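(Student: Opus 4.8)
The plan is to establish the two inclusions $\CC\,\Phi_h \subseteq \ZZ_h$ and $\ZZ_h \subseteq \CC\,\Phi_h$ separately, leveraging the local result of Proposition~\ref{prp:stream} element by element together with the continuous stream-function theory recalled in Subsection~\ref{sub:2.2} to take care of the global conformity.

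For the inclusion $\CC\,\Phi_h \subseteq \ZZ_h$, I would take any $\phi_h \in \Phi_h$ and set $\vv_h := \CC\,\phi_h$. Since $\phi_h \in \Phi = H^2_0(\Omega)$ we have $\vv_h \in [H^1(\Omega)]^2$, and the homogeneous conditions $\phi_h = 0$, $\partial\phi_h/\partial n = 0$ on $\partial\Omega$ force $\gr\phi_h = \mathbf{0}$ (the tangential derivative vanishes because $\phi_h=0$ along $\partial\Omega$), hence $\vv_h = \mathbf{0}$ on $\partial\Omega$, so that $\vv_h \in [H^1_0(\Omega)]^2 = \VV$. Locally, $\phi_h|_E \in \Phi_h^E$ gives $\vv_h|_E = \CC(\phi_h|_E) \in \ZZ_h^E \subseteq \VV_h^E$ by Proposition~\ref{prp:stream}, whence $\vv_h \in \VV_h$. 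Finally $\dd\,\vv_h = \dd(\CC\,\phi_h) = 0$ pointwise yields $b(\vv_h, q_h) = 0$ for all $q_h \in Q_h$, so $\vv_h \in \ZZ_h$.

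For the reverse inclusion $\ZZ_h \subseteq \CC\,\Phi_h$, I would take $\vv_h \in \ZZ_h$ and first use the inclusion $\ZZ_h \subseteq \ZZ$ from \eqref{eq:inclusion}. Since $\Omega$ is simply connected, the continuous result of Subsection~\ref{sub:2.2} provides a stream function $\phi \in \Phi = H^2_0(\Omega)$ with $\vv_h = \CC\,\phi$; this $\phi$ is globally $H^2_0$, so the only point left to check is $\phi|_E \in \Phi_h^E$ for every $E$. On each element $\CC(\phi|_E) = \vv_h|_E \in \ZZ_h^E = \CC\,\Phi_h^E$ by Proposition~\ref{prp:stream}, so there is $\widetilde\phi_E \in \Phi_h^E$ with $\CC(\phi|_E - \widetilde\phi_E) = \mathbf{0}$; since $E$ is connected this difference is a constant, and because $\Pk_{k+1}(E) \subseteq \Phi_h^E$ contains the constants we conclude $\phi|_E \in \Phi_h^E$. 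Hence $\phi \in \Phi_h$ and $\vv_h = \CC\,\phi \in \CC\,\Phi_h$.

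The main obstacle is the global $H^2$-conformity in the reverse inclusion: patching the local potentials $\widetilde\phi_E$ directly would require matching the inter-element constants so that the assembled function lands in $H^2_0(\Omega)$, which is delicate. I sidestep this entirely by producing the global potential $\phi$ first from the continuous stream-function theorem and only afterwards verifying local membership, so that the global statement reduces to the already proved local Proposition~\ref{prp:stream} together with the elementary fact that constants belong to $\Phi_h^E$.
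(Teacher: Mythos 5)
Your proof is correct, but your handling of the surjectivity inclusion $\ZZ_h \subseteq \CC\,\Phi_h$ takes a genuinely different route from the paper. The paper proves only $\CC\,\Phi_h \subseteq \ZZ_h$ (exactly as you do, via Proposition~\ref{prp:stream} plus the boundary conditions) and then concludes equality by a dimensional argument: the curl is injective on $\Phi_h \subset H^2_0(\Omega)$, and $\dim(\Phi_h)$ matches $\dim(\ZZ_h)$ by a count using the Euler formula $n_V - n_e + n_P = 1$. You instead argue element-wise for the reverse inclusion: given $\vv_h \in \ZZ_h \subseteq \ZZ$, the continuous stream-function theorem on the simply connected $\Omega$ yields a global potential $\phi \in H^2_0(\Omega)$, and local membership $\phi|_E \in \Phi_h^E$ follows from $\ZZ_h^E = \CC\,\Phi_h^E$ together with the observation that the local potentials differ from $\phi|_E$ by constants, which lie in $\Pk_{k+1}(E) \subseteq \Phi_h^E$. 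This neatly sidesteps the constant-patching problem (the global conformity is inherited from the continuous theorem rather than assembled), and it avoids any global dimension count, making the argument more robust, e.g., if explicit dimension formulas were unavailable; the paper's route is shorter here because \eqref{eq:dimension_Phi} and \eqref{eq:dimensione Z_h^E} were already computed. One small point worth making explicit: the paper's Subsection~\ref{sub:2.2} only asserts $\phi \in H^2(\Omega)$ up to a constant, so your claim $\phi \in H^2_0(\Omega)$ uses that $\CC\,\phi = \vv_h = \mathbf{0}$ on $\partial\Omega$ forces $\gr \phi = \mathbf{0}$ there, whence $\phi$ is constant on the boundary, which is \emph{connected} because $\Omega$ is simply connected; subtracting that constant gives the $H^2_0$ normalization. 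With that remark added, your argument is complete.
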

\begin{proof}
We note that Proposition \ref{prp:stream} endowed with the boundary
condition in the definitions  \eqref{eq:Zh} and \eqref{eq:Phi_h_global}
imply $\CC \, \Phi_h \subseteq \ZZ_h$.
The proof now follows by a dimensional argument using the Euler formula.
\end{proof}

\begin{corollario}
\label{cor:exact global}
Let $\Phi_h^E$, $\VV_h$ and $Q_h$ be the spaces defined in
\eqref{eq:Phi_h_global}, \eqref{eq:V_h} and \eqref{eq:Q_h}, respectively,
and let $\widetilde{\VV}_h$ and $\widetilde{Q}_h$ denote the reduced
velocity space and the piecewise constant pressures, respectively,
see Remark \ref{rem:reduced}.
Then, the following sequences are exact 
\begin{equation}
\label{eq:exact4}
0 \, \xrightarrow[]{\, \, \, \,\quad \text{{$i$}} \quad \, \, \, \,} \,
\Phi_h \, \xrightarrow[]{\, \quad \text{{$\CC$}} \quad \,}\,
{\VV_h} \, \xrightarrow[]{\, \quad \text{{$\dd$}} \quad \,} 
{Q_h} \, \xrightarrow[]{\, \, \quad \text{{$0$}} \quad \, \,} 
0 \,,
\end{equation}
\begin{equation}
\label{eq:exact4bis}
0 \, \xrightarrow[]{\, \, \, \,\quad \text{{$i$}} \quad \, \, \, \,} \,
\Phi_h \, \xrightarrow[]{\, \quad \text{{$\CC$}} \quad \,}\,
{\widetilde{\VV}_h} \, \xrightarrow[]{\, \quad \text{{$\dd$}} \quad \,} 
\, {\widetilde{Q}_h} \, \, \xrightarrow[]{\, \, \quad \text{{$0$}} \quad \, \,} 
0 \,.
\end{equation}
The case without boundary conditions follows analogously.
\end{corollario}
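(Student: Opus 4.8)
The plan is to verify exactness at each of the three nodes of the global sequence \eqref{eq:exact4}, recycling the local result of Corollary~\ref{cor:exact} and, crucially, the global identity $\CC\,\Phi_h = \ZZ_h$ already established in Proposition~\ref{prp:stream global}. Concretely, the three conditions to check are: injectivity of $\CC$ on $\Phi_h$ (exactness at $\Phi_h$, whose left neighbour is $0$), the equality $\mathrm{im}(\CC) = \ker(\dd)$ (exactness at $\VV_h$), and surjectivity of $\dd$ onto $Q_h$ (exactness at $Q_h$).

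First I would settle exactness at $\Phi_h$. Since $\Phi_h \subseteq \Phi = H^2_0(\Omega)$, any $\phi \in \Phi_h$ with $\CC\,\phi = \mathbf{0}$ has $\nabla \phi = \mathbf{0}$, hence is constant; the homogeneous boundary condition $\phi = 0$ on $\partial\Omega$ built into $\Phi_h$ then forces $\phi \equiv 0$. This is exactly why the global complex starts from $0$ rather than from $\R$ as in the local complex \eqref{eq:exact3}: globally the constants are removed by the boundary conditions. Exactness at $\VV_h$ is then immediate: by construction $\dd\,\VV_h \subseteq Q_h$ (see \eqref{eq:divfree}), and since for $\vv_h \in \VV_h \subset [H^1_0(\Omega)]^2$ one has $\dd\,\vv_h \in Q_h$, testing $b(\vv_h,\cdot)=0$ against $q_h = \dd\,\vv_h$ shows that $\ker(\dd|_{\VV_h})$ coincides with the discrete kernel $\ZZ_h$ of \eqref{eq:Zh}; combining this with $\CC\,\Phi_h = \ZZ_h$ from Proposition~\ref{prp:stream global} yields $\mathrm{im}(\CC) = \ZZ_h = \ker(\dd)$.

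The main obstacle is exactness at $Q_h$, i.e. the surjectivity $\dd\,\VV_h = Q_h$, because local surjectivity $\dd\,\VV_h^E = \Pk_{k-1}(E)$ does not automatically glue to a global statement in the presence of the inter-element continuity constraints and the single global mean-zero condition defining $Q_h$. Rather than building an explicit Fortin-type preimage, I would reduce surjectivity to a scalar dimension identity: having already shown that $\CC$ is injective and that $\mathrm{im}(\CC) = \ker(\dd)$, the rank--nullity theorem gives $\dim\,\mathrm{im}(\dd) = \dim\VV_h - \dim\Phi_h$, so $\dd$ is onto $Q_h$ if and only if $\dim\VV_h = \dim\Phi_h + \dim Q_h$. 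This identity follows exactly as in the proof of Proposition~\ref{prp:stream global}, by summing the local dimension formulas \eqref{eq:dimensione V_h^E}, \eqref{eq:dimension_Phi} and $\dim\Pk_{k-1}(E)$ and bookkeeping the shared vertex and edge degrees of freedom through the Euler relation of the planar mesh (together with the single global mean-zero constraint on $Q_h$). Alternatively, one may simply invoke the velocity/pressure inf-sup stability established in \cite{Stokes:divfree,Bdv-Lovadina-Vacca:2018}, which already asserts $\dd\,\VV_h = Q_h$.

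Finally, the reduced sequence \eqref{eq:exact4bis} is handled by the very same three steps with $\VV_h,\,Q_h$ replaced by $\widetilde{\VV}_h,\,\widetilde{Q}_h$. The only point needing care is that suppressing the $\mathbf{D_V4}$ degrees of freedom, equivalently restricting the divergence image to piecewise constants, leaves both the kernel and the relevant dimension count consistent: on $\ZZ_h$ the divergence already vanishes, so $\ker(\dd|_{\widetilde{\VV}_h}) = \ZZ_h = \CC\,\Phi_h$ persists, and surjectivity onto $\widetilde{Q}_h$ follows from the corresponding reduced dimension identity. The case without boundary conditions is identical, except that the left end becomes $\R$ (the global constant stream functions, which lie in the kernel of $\CC$) instead of $0$.
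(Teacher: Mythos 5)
Your proposal is correct and follows essentially the same route the paper intends: the paper states this corollary without a separate proof, as an immediate consequence of Proposition~\ref{prp:stream global} (inclusion plus Euler-formula dimension count), which is exactly the machinery you deploy, together with the inclusion \eqref{eq:divfree} and the known surjectivity of the divergence from the inf-sup stability of \cite{Stokes:divfree,Bdv-Lovadina-Vacca:2018}. Your fleshed-out verification at the three nodes (injectivity from the boundary conditions in $\Phi_h \subset H^2_0(\Omega)$, $\ker(\dd|_{\VV_h})=\ZZ_h=\CC\,\Phi_h$, and the rank--nullity/dimension identity, which indeed balances via $n_V-n_e+n_P=1$) is a sound and faithful expansion of the argument the authors leave implicit.
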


\subsection{The discrete problem}
\label{sub:5.2}
In the light of Proposition \ref{prp:stream}, referring to
\eqref{eq:nsvirtual ker} and \eqref{eq:Phi_h_global}
we can set the virtual element approximation of the
Navier--Stokes equation in the $\CC$ formulation:
\begin{equation}
\label{eq:vem-curl}
\left\{
\begin{aligned}
& \text{find $\psi_h \in \Phi_h$, such that} \\
& \nu \, a_h(\CC \, \psi_h, \, \CC \, \phi_h) + c_h(\CC \, \psi_h; \, 
\CC \, \psi_h, \, \CC \, \phi_h) = (\ff_h, \, \CC \, \phi_h) \, & \text{for all $\phi_h \in \Phi_h$.}
\end{aligned}
\right.
\end{equation}
Due to Proposition \ref{prp:stream global}, Problem~\eqref{eq:vem-curl}
is equivalent to \eqref{eq:nsvirtual ker}.
We remark that all forms in \eqref{eq:vem-curl} are exactly
computable by the DoFs $\mathbf{D_{\Phi}}$. In fact, recalling
Remark~\ref{rm:dofs}, the polynomials
$\PN (\CC \, \phi_h)$, $\P0 (\CC \, \phi_h)$ and $\PP0\nabla(\CC \, \phi_h)$
are computable on the basis of $\mathbf{D_{\Phi}}$, so that,
referring to \eqref{eq:a_h}, \eqref{eq:c_h} and \eqref{eq:right}, we infer that
\[
a_h(\CC \, \cdot, \, \CC \, \cdot) \,, \qquad 
c_h(\CC \, \cdot; \, \CC \, \cdot, \, \CC \, \cdot) \,, \qquad 
(\ff_h, \, \CC \, \cdot)
\]
are exactly computable from DoFs $\mathbf{D_{\Phi}}$, see also Remark \ref{curl-operator}.
The well-posedness of Problem \eqref{eq:vem-curl} under
the assumption $\mathbf{(A0)_h}$  
immediately follows from Corollary \ref{cor:exact global}. 
In fact  the $\CC$
operator on the global space $\Phi_h$ is an isomorphism into $\ZZ_h$. 
Owing to this isomorphism, the existence and uniqueness result of Theorem \ref{thm:well} carry over to Problem \eqref{eq:vem-curl}, yielding the following theorem.
\begin{theorem}
\label{thm:wellstream}
Under
the assumption $\mathbf{(A0)_h}$  
Problem \eqref{eq:vem-curl} has a unique solution $\psi_h \in \Phi_h$ such that
\begin{equation}
\label{eq:solution virtual estimates curl}
\| \psi_h\|_{\Phi} \leq \frac{\| \ff_h\|_{H^{-1}}}{\alpha_* \, \nu} \,.
\end{equation}
\end{theorem}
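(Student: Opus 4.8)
The plan is to transport the well-posedness and the stability bound already available for the velocity kernel formulation \eqref{eq:nsvirtual ker} over to the $\CC$ formulation \eqref{eq:vem-curl}, exploiting the fact that $\CC$ realizes an isomorphism between $\Phi_h$ and $\ZZ_h$. Essentially the statement is a corollary of the exact-sequence machinery developed above together with Theorem~\ref{thm:well}.

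First I would invoke the global exact sequence \eqref{eq:exact4} of Corollary~\ref{cor:exact global}. Exactness at $\Phi_h$ (where the incoming map has trivial image) gives injectivity of $\CC \colon \Phi_h \to \VV_h$, while exactness at $\VV_h$ (the image of $\CC$ coinciding with the kernel of $\dd$) gives $\CC(\Phi_h) = \ZZ_h$. Hence $\CC \colon \Phi_h \to \ZZ_h$ is a linear isomorphism, and the substitutions $\uu_h = \CC\,\psi_h$ and $\vv_h = \CC\,\phi_h$ put Problem~\eqref{eq:vem-curl} in one-to-one correspondence with the kernel problem~\eqref{eq:nsvirtual ker}: since the test functions $\phi_h$ range over all of $\Phi_h$, the associated $\vv_h$ range over all of $\ZZ_h$, so the two variational problems have the same solution set under the identification $\psi_h \leftrightarrow \uu_h$.

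Next I would use that \eqref{eq:nsvirtual ker} is equivalent to the velocity--pressure Problem~\eqref{eq:ns virtual}, which under assumption $\mathbf{(A0)_h}$ is well-posed by Theorem~\ref{thm:well}; let $\uu_h \in \ZZ_h$ be its unique solution. Pulling back through the isomorphism, $\psi_h := \CC^{-1}\uu_h$ is the unique element of $\Phi_h$ solving \eqref{eq:vem-curl}, which establishes existence and uniqueness.

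Finally, for the stability bound I would observe that the two seminorms actually coincide: computing the Jacobian of $\CC\,\phi = (\partial \phi/\partial y,\, -\partial \phi/\partial x)$ shows that $\Gr(\CC\,\phi)$ and the Hessian $\boldsymbol{\nabla^2}\phi$ have the same Frobenius norm pointwise, so $\|\CC\,\phi\|_{\VV} = |\CC\,\phi|_{H^1} = |\phi|_{H^2} = \|\phi\|_{\Phi}$ for every $\phi \in \Phi_h$. Therefore
\[
\|\psi_h\|_{\Phi} = \|\CC\,\psi_h\|_{\VV} = \|\uu_h\|_{\VV} \leq \frac{\|\ff_h\|_{H^{-1}}}{\alpha_* \, \nu},
\]
the last inequality being \eqref{eq:solution virtual estimates}, which is exactly \eqref{eq:solution virtual estimates curl}. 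No step here is genuinely hard; the only point that requires a short verification is the pointwise identity $|\Gr(\CC\,\phi)| = |\boldsymbol{\nabla^2}\phi|$, which is what turns the velocity estimate into the stream-function estimate with the same constant, while everything else is a direct consequence of the exact-sequence isomorphism and of results proved earlier.
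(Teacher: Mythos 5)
Your proposal is correct and follows essentially the same route as the paper, which likewise deduces well-posedness from the isomorphism $\CC \colon \Phi_h \to \ZZ_h$ provided by Corollary~\ref{cor:exact global} and then transports the existence, uniqueness and stability of Theorem~\ref{thm:well} to Problem~\eqref{eq:vem-curl}. Your explicit pointwise verification that $|\Gr(\CC\,\phi)| = |\boldsymbol{\nabla^2}\phi|$, hence $\|\CC\,\phi\|_{\VV} = \|\phi\|_{\Phi}$, is a detail the paper leaves implicit in saying the result ``carries over,'' and it correctly justifies that the constant in \eqref{eq:solution virtual estimates curl} is unchanged from \eqref{eq:solution virtual estimates}.
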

The convergence
of the discrete solution $\CC \, \psi_h$ of \eqref{eq:vem-curl} 
to the continuous solution $\CC \, \phi$ of \eqref{eq:ns variazionale curl}
follows immediately from Theorem \ref{thm:u}, taking $\uu = \CC \, \phi$ and
$\uu_h = \CC \, \phi_h$.

%Therefore we can state the following result.
%%
%\begin{theorem}
%\label{thm:psi}
%Under the assumptions assumptions $\mathbf{(A0)}$ and $\mathbf{(A0)_h}$, 
%let $\psi \in \Phi$ be the solution of Problem \eqref{eq:ns variazionale curl} and $\psi_h \in \Phi_h$ be the solution of Problem \eqref{eq:vem-curl}.
%% 
%Assuming moreover $\phi \in H^{s+2}(\Omega)$ and $\ff \in [H^{s+1}(\Omega)]^2$, then 
%\begin{equation}
%\label{eq:thm:psi}
%\| \psi - \psi_h \|_{\Phi} \leq \, h^{s} \, \mathcal{P}(\phi; \, \nu, \gamma, \gamma_h) + \, h^{s+2} \, \mathcal{H}(\ff; \nu, \gamma_h)
%\end{equation}
%for a suitable functions
%$\mathcal{P}$, 
%$\mathcal{H}$
%independent of $h$. 
%\end{theorem}
%
%

Clearly Problem \eqref{eq:vem-curl} does not provide any information on the pressure $p$.
Nevertheless, the Stokes complex associated to the proposed scheme turns out to be very helpful 
if we are interested in computing suitable approximation $p_h$ of $p$.
Indeed referring to \eqref{eq:V_h} and \eqref{eq:ns virtual},  starting from the solution $\psi_h$ of Problem \eqref{eq:vem-curl}, we infer the following problem
\begin{equation}
\label{eq:pressure}
\left \{
\begin{aligned}
& \text{find $p_h \in Q_h$, such that}
\\
& b(\vv_h, \, p_h) = 
- \nu \, a_h(\CC \, \psi_h, \, \vv_h) 
- c_h(\CC \, \psi_h; \,  \CC \, \psi_h, \, \vv_h) +  (\ff_h, \, \vv_h) 
\qquad  \text{for all $\vv_h \in \VV_h$.}
\end{aligned}
\right .
\end{equation}
Since $\dim(\VV_h) > \dim(Q_h)$, the previous system, is actually an  overdetermined system, i.e. there are more equations than unknowns.
Nevertheless the well-posedness  of Problem \eqref{eq:pressure} is guaranteed by Theorem \ref{thm:well}.
We refer to Section \ref{sec:tests} for a deeper analysis and computational aspects of \eqref{eq:pressure}.

We stress that the $\CC$ virtual formulation \eqref{eq:vem-curl} exhibits important differences
from the computational point of view compared with the velocity-pressure formulation \eqref{eq:ns virtual}.
First of all the linear system associated to Problem \eqref{eq:vem-curl} has
$2(n_P - 1)$ less DoFs than Problem  \eqref{eq:ns virtual}, even if considering its equivalent reduced form (see Remark \ref{rem:reduced}).
Moreover the first iteration of the Newton method applied to the 
the non-linear virtual stream formulation
\eqref{eq:vem-curl} results in a linear system which is symmetric and positive definite,
whereas applied to the virtual element method \eqref{eq:ns virtual} in velocity-pressure formulation leads
to an indefinite linear system. 
These advantages come at the price of a higher condition number of the involved linear systems.

\begin{remark}
\label{rm:load}
Simple integration by parts gives
$
(\ff, \, \CC \, \phi) = (\cc \, \ff, \, \phi)
$.
By Remark \ref{rm:brezzi marini}, the DoFs $\mathbf{D_{\Phi}}$ allow us to compute  the $L^2$-projection $\Pi^{0,E}_{k-1} \colon \Phi_h^E \to \Pk_{k-1}(E)$, so that we can consider a  new computable right hand-side
\begin{equation}
\label{eq:right new}
\left((\cc \, \ff)_h, \phi_h \right)  := 
%\sum_{E \in \Omega_h} \int_E (\cc \, \ff)_h \, \phi_h \, {\rm d}E = 
\sum_{E \in \Omega_h} \int_E \Pi_{k-1}^{0, E} (\cc \, \ff) \, \phi_h \, {\rm d}E = 
\sum_{E \in \Omega_h} \int_E (\cc \, \ff) \, \Pi_{k-1}^{0,E}  \phi_h \, {\rm d}E \,.
\end{equation}
This new formulation of the right-hand side gets the same order of accuracy of the original one.
 
In particular if the external force is irrotational, i.e. $\ff = \nabla f$, we improve the error estimate in \eqref{eq:thm:u} by removing the dependence of the error by the load.
More generally, with the choice \eqref{eq:right new}, 
we completely remove the influence in the error stemming  from the irrotational part in the Helmholtz decomposition of the load.
Clearly \eqref{eq:right new} can be applied only when $f$ is given as an explicit function.
\end{remark}

\section{Numerical Tests}
\label{sec:tests}
%--------------------------------------------------------------------

In this section we present two sets of numerical experiments to test the practical performance of the proposed virtual element methods \eqref{eq:vem-curl}, also compared with a direct $C^1$ VEM discretization of the stream formulation \eqref{eq:ns stream} described in the Appendix, see equation \eqref{eq:ns stream vem}. 
For the scheme \eqref{eq:vem-curl}, in all tests we investigate the three possible options for the trilinear form  in \eqref{eq:cconv_h^E}, \eqref{eq:cskew_h^E}, \eqref{eq:crot_h^E}.
In Test \ref{test1}  we study the convergence of the proposed
virtual element schemes \eqref{eq:vem-curl} and \eqref{eq:ns stream vem} for the  discretization of the Navier--Stokes equation
in $\CC$ formulation and stream formulation respectively.
A comparison of \eqref{eq:vem-curl} (in terms of errors, number of DoFs, condition number of the resulting linear systems) with the equivalent virtual element scheme \eqref{eq:ns virtual} for the Navier--Stokes equation in  velocity-pressure formulation is also performed. 
In Test \ref{test2} we consider a benchmark problem for the Navier--Stokes equation \eqref{eq:ns stream} with the property of having the velocity and stream solution in the corresponding  discrete spaces. It is well known that classical mixed finite element methods lead to significant velocity errors, stemming from the velocity/pressure coupling in the error estimates. This effect is greatly reduced by the presented methods (cf. Theorem \ref{thm:u}, estimate \eqref{eq:thm:u} and Remark \ref{rm:trilinear}). 
In order to compute the VEM errors, we consider the computable error quantities:
\begin{gather*}
\text{\texttt{error}}(\mathbf{u}, H^1) := \left( \sum_{E \in \Omega_h} \left \| \boldsymbol{\nabla} \, \uu -  \boldsymbol{\Pi}_{k-1}^{0, E} (\boldsymbol{\nabla} \, \uu_h) \right \|_{0,E}^2 \right)^{1/2} 
\end{gather*}
for the velocity-pressure formulation \eqref{eq:ns virtual} and 
\[
\text{\texttt{error}}(\psi, H^2) := \left( \sum_{E \in \Omega_h} \left \| \boldsymbol{\nabla} \, \CC \psi -  \boldsymbol{\Pi}_{k-1}^{0, E} (\boldsymbol{\nabla} \,  \CC \psi_h) \right \|_{0,E}^2 \right)^{1/2} 
\]
for the $\CC$ and stream formulations (see \eqref{eq:vem-curl}
and \eqref{eq:ns stream vem}, respectively).
For what concerns the pressures we simply compute the standard $L^2$ error
$
\text{\texttt{error}}(p, L^2)  := \|p - p_h\|_0 
$.
For the computation of the discrete pressure for the virtual element scheme \eqref{eq:vem-curl} 
we follow \eqref{eq:pressure} and solve the overdetermined system by means of the least squares method.
We briefly sketch the construction of the least square formula.
Let $\{\vv_j\}_{j=1}^{{\rm dim}(\VV_h)}$ be  the canonical basis functions of $\VV_h$
and let us denote with  $\overline{\boldsymbol{r}}_h$ the vector with component
\[
\overline{\boldsymbol{r}}_{h, j} :=
- \nu \, a_h(\CC \, \psi_h, \, \vv_j) 
- c_h(\CC \, \psi_h; \,  \CC \, \psi_h, \, \vv_j) +  (\ff_h, \, \vv_j) 
\qquad  \text{for $j=1, \dots, {\rm dim}(\VV_h)$.}
\]
i.e. $\overline{\boldsymbol{r}}_h$ contains the values of the degrees of freedom associated
to the right hand side of \eqref{eq:pressure} with respect to the basis $\{\vv_j\}_{j=1}^{{\rm dim}(\VV_h)}$.
Similarly, let $\{q_i\}_{i=1}^{{\rm dim}(Q_h)}$ be  the canonical basis functions of $Q_h$ and for any piecewise polynomial $p_{k-1} \in Q_h$ we denote with 
$\overline{\boldsymbol{p}}_h$ the vector containing the values of the
coefficients with respect to the basis $\{q_i\}$ associated to $p_{k-1}$.
Then the least squares formula associated to \eqref{eq:pressure} is
\[
B \,B^T \, \overline{\boldsymbol{p}}_h = B \, \overline{\boldsymbol{r}}_h
\]  
where the matrix $B \in \R^{{\rm dim}(Q_h) \times {\rm dim}(\VV_h)}$ is defined by
\[
B_{i,j} = b(\vv_j, \, q_i) 
\qquad
\text{for $i=1, \dots, {\rm dim}(Q_h)$ and $j=1, \dots, {\rm dim}(\VV_h)$.}
\]
\begin{remark}\label{curl-operator}
We note that, if a code for formulation \eqref{eq:ns virtual} is available, all is needed in order to implement \eqref{eq:vem-curl} is the construction of the rectangular matrixes that represent, in terms of the degrees of freedom, the local operator $\CC: \Phi_h^E \rightarrow \VV_h^E$. Once this matrixes are built, one just needs to combine them with the local stiffness matrixes of scheme \eqref{eq:vem-curl} and finally assemble the global systems for the space $\Phi_h$. 
\end{remark} 
The polynomial degree of accuracy for the numerical tests is $k=2$.
In the experiments we consider the computational domains $\Omega_{\rm Q} := [0,1]^2$ and $\Omega_{\rm D} := \{\mathbf{x} \in \R^2 \, \text{s.t.} \, |\mathbf{x}| \leq 1 \}$. The square domain $\Omega_{\rm Q}$ is partitioned using the following sequences of polygonal meshes:
\begin{itemize}
\item $\{ \mathcal{V}_h\}_h$: sequence of CVT (Centroidal Voronoi tessellation) with $h=1/8, 1/16, 1/32, 1/64$,
\item $\{ \mathcal{Q}_{h} \}_h$: sequence of distorted quadrilateral meshes with $h=1/10, 1/20, 1/40, 1/80$.
\end{itemize}
An example of the adopted meshes is shown in Figure \ref{meshq}.  
For the generation of the Voronoi meshes we use the code Polymesher \cite{TPPM12}.
The distorted quadrilateral meshes are obtained starting from the uniform square meshes and displacing the internal vertexes with a proportional ``distortion amplitude'' of $0.3$. 
\begin{figure}[!h]
\centering
\includegraphics[scale=0.27]{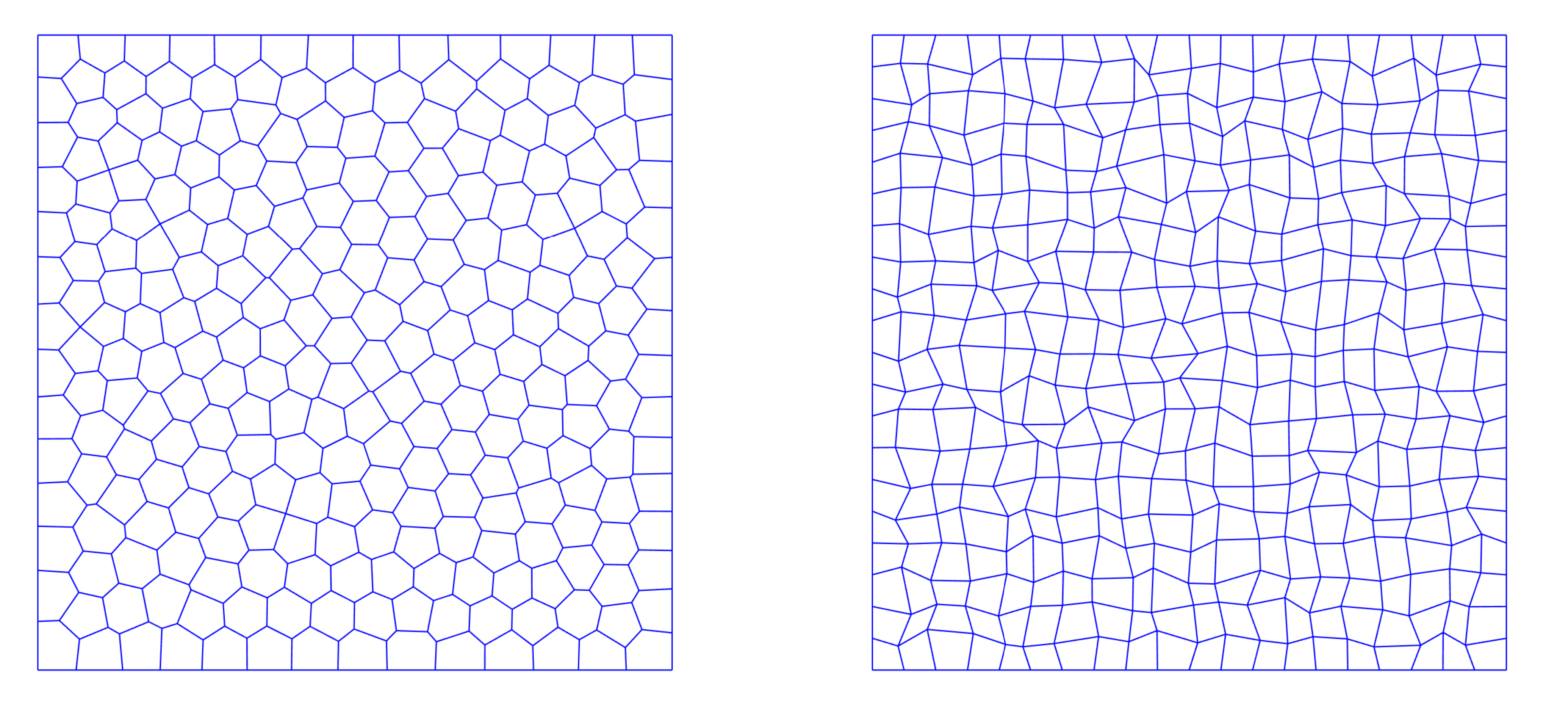} 
\caption{Example of polygonal meshes: $\mathcal{V}_{1/16}$,  $\mathcal{Q}_{1/20}$.}
\label{meshq}
\end{figure}
For what concerns the disk $\Omega_{\rm D}$ we consider the sequences of polygonal meshes:
\begin{itemize}
\item $\{ \mathcal{T}_h\}_h$: sequence of triangular meshes with $h=1/5, 1/10, 1/20, 1/40$,
\item $\{ \mathcal{W}_h\}_h$: sequence of mapped CVT with $h= 1/4, 1/8, 1/16, 1/20$.
\end{itemize}
The meshes $\mathcal{W}_h$ are obtained by mapping a CVT on the square 
$[-1, 1]^2$ on the disk $\Omega_{\rm D}$ through the map
\[
\Sigma \colon [-1, 1]^2 \to  \Omega_{\rm D} \,
\qquad \text{with} \qquad
\Sigma \colon (x, y) \mapsto 
\left( x \sqrt{1 - \frac{y^2}{2}}, \, y \sqrt{1 - \frac{x^2}{2}}\right) \,.
\] 
Figure \ref{meshd} displays an example of the adopted meshes.
\begin{figure}[!h]
\centering
\hspace{-0.5cm}
\includegraphics[scale=0.3]{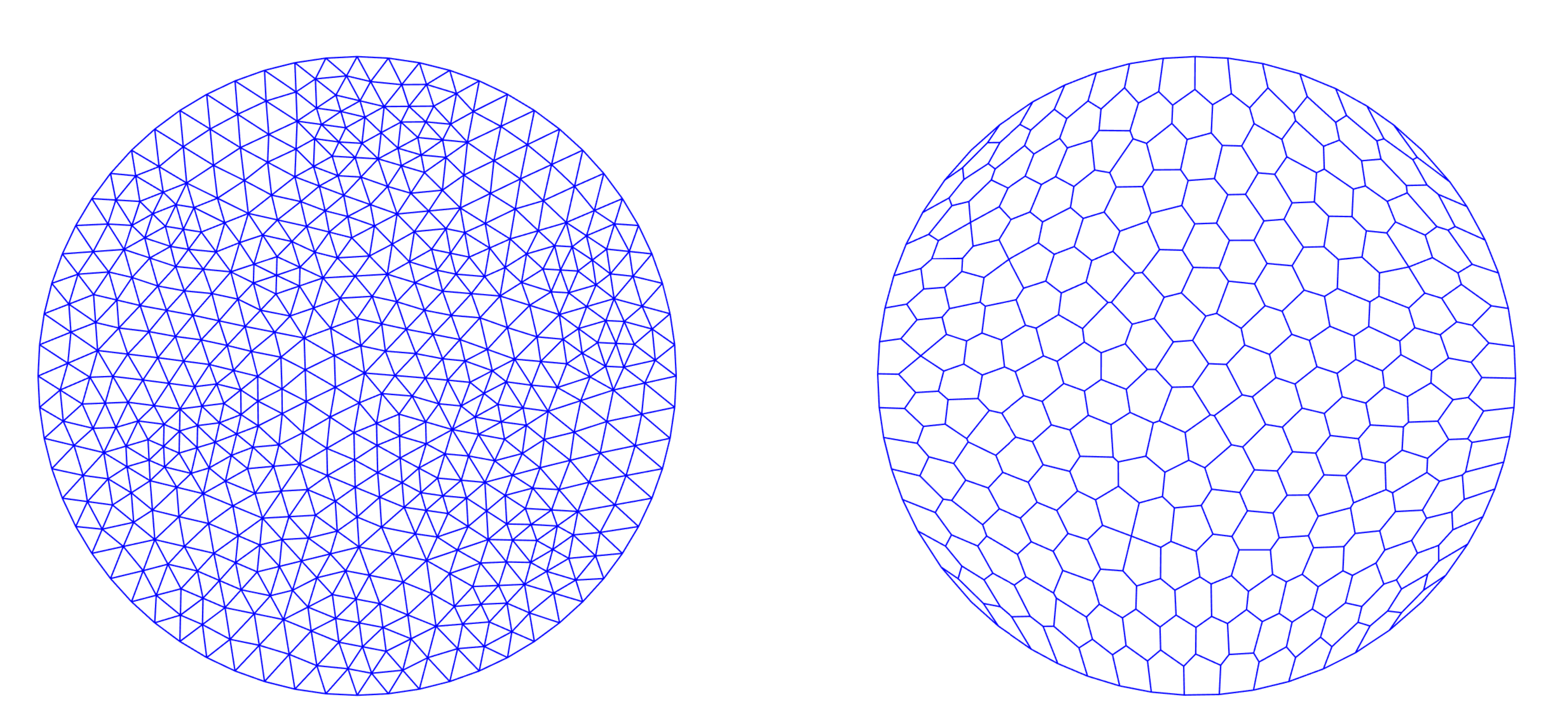} 
\caption{Example of polygonal meshes: $\mathcal{T}_{1/10}$,  $\mathcal{W}_{1/8}$.}
\label{meshd}
\end{figure}

\begin{test}
\label{test1}
In this test we solve the Navier--Stokes equation 
%\eqref{eq:ns variazionale curl} and \eqref{eq:ns stream variazionale} 
on the square domain $\Omega_{\rm Q}$ with  viscosity $\nu = 1$ and with
the load term $\mathbf{f}$ chosen such that the analytical velocity-pressure solution 
and the corresponding stream function solution are respectively 
\begin{gather*}
\mathbf{u}(x,y) = \frac{1}{2} \, \begin{pmatrix}
\sin(2 \pi x)^2 \, \sin(2 \pi y) \, \cos(2 \pi y) \\
- \sin(2 \pi y)^2 \, \sin(2 \pi x) \, \cos(2 \pi x)
\end{pmatrix} \qquad 
p(x,y) = \pi^2 \, \sin(2 \pi x) \, \cos(2 \pi y) \,,
\\
\psi(x,y) = 
\frac{1}{8\, \pi} \, \sin(2 \pi x)^2 \, \sin(2 \pi y)^2 \,.
\end{gather*}

We test the virtual element scheme \eqref{eq:vem-curl}. 
In Figures \ref{fig:h-voronoi}
and \ref{fig:h-quad} we show the results obtained with  the sequences of Voronoi meshes $\mathcal{V}_h$ and  quadrilateral meshes $\mathcal{Q}_h$, by considering the three possible choices of the trilinear forms.
We stress that in all cases considered we compare the discrete convective pressure $p_h$ (for the trilinear form in $\croth(\cdot; \cdot, \cdot)$ we consider the definition \eqref{eq:ph_convective}).  
\begin{figure}[!h]
\centering
\includegraphics[scale=0.3]{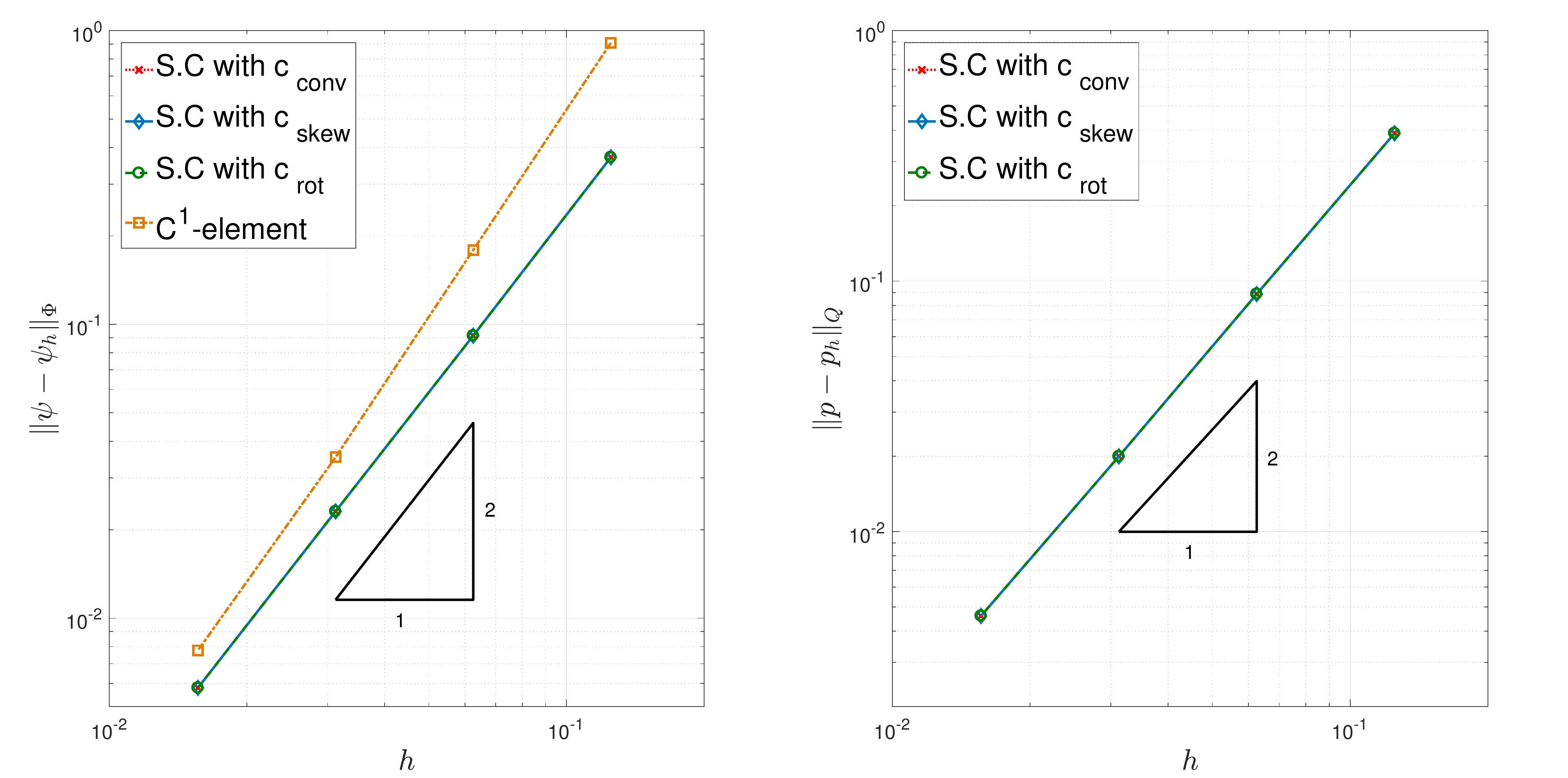} 
\caption{Test \ref{test1}.  Errors computed with the VEM \eqref{eq:vem-curl} and
\eqref{eq:ns stream vem}, meshes $\mathcal{V}_h$.}
\label{fig:h-voronoi}
\end{figure}
\begin{figure}[!h]
\centering
\includegraphics[scale=0.3]{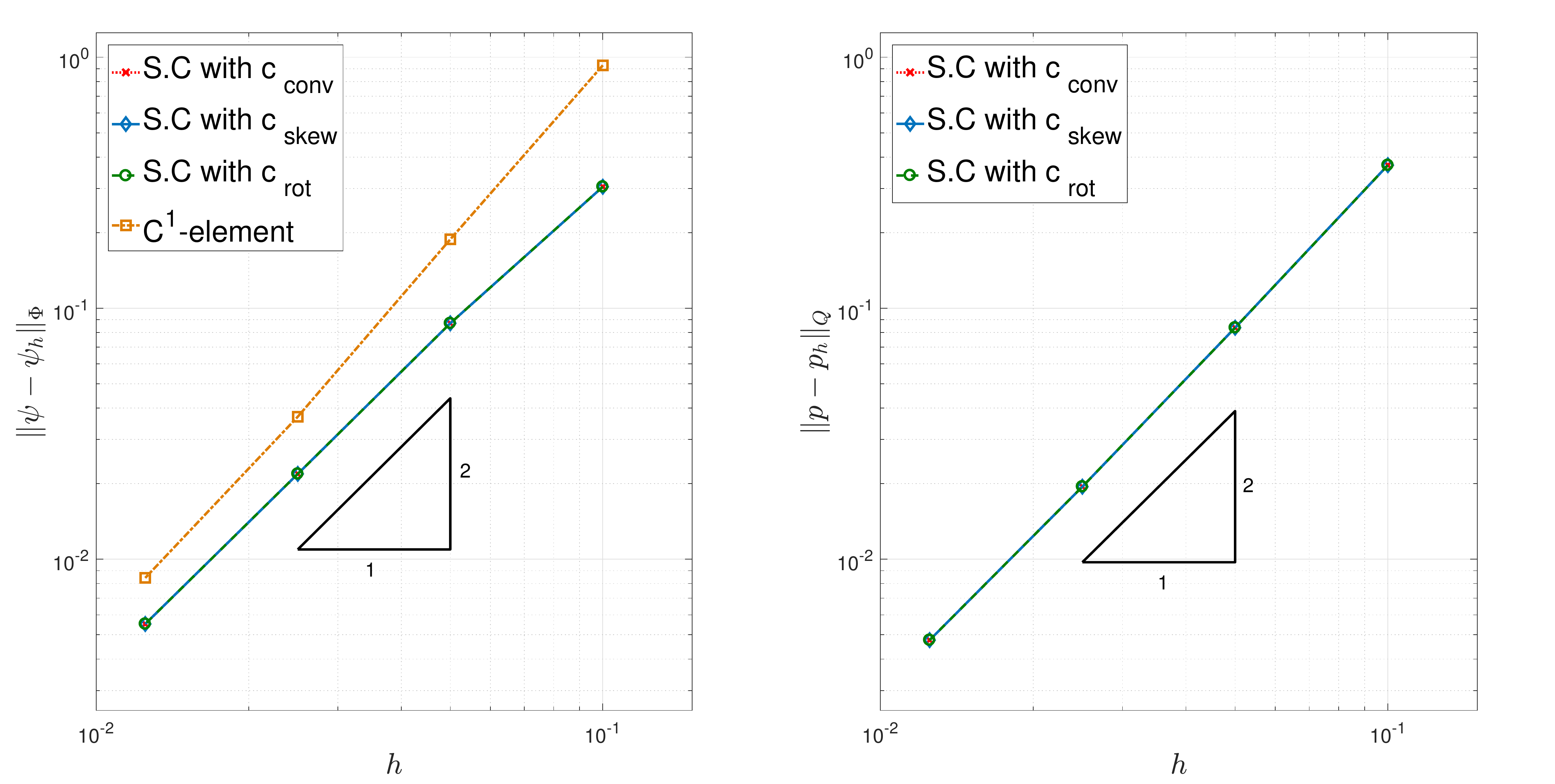} 
\caption{Test \ref{test1}.  Errors computed with the VEM \eqref{eq:vem-curl} and 
\eqref{eq:ns stream vem}, meshes $\mathcal{Q}_h$.}
\label{fig:h-quad}
\end{figure}
We notice that the theoretical predictions of Section \ref{sec:5} are confirmed. Moreover, we observe that the virtual element methods obtained with the three different trilinear forms exhibit almost identical results, at least for this example and with the adopted meshes. In Figures \ref{fig:h-voronoi} (left) and \ref{fig:h-quad} (left) we also depict the error for the direct $C^1$ discretization of the stream formulation \eqref{eq:ns stream vem}, that follows a similar behaviour to \eqref{eq:vem-curl}. Note that we do not compute a pressure error for scheme \eqref{eq:ns stream vem} since the computation of a discrete pressure is a more complex issue in this case, see Remark \ref{rem:C1linke}.
Finally we test the corresponding virtual element method \eqref{eq:ns virtual} with the same sequences of polygonal meshes $\mathcal{V}_h$, $\mathcal{Q}_h$. 
Table \ref{tab1} shows the results obtained respectively  with VEM \eqref{eq:ns virtual} and \eqref{eq:vem-curl} obtained considering the trilinear form $\croth(\cdot; \cdot, \cdot)$.
The results are analogous also for the other two proposed trilinear forms (not shown).
\begin{table}[!h]
\centering
\begin{tabular}{ll*{4}{c}}
\toprule
& & \multicolumn{2}{c}{velocity-pressure formulation}&\multicolumn{2}{c}{$\CC$ formulation}\\
\midrule
&                         $h$                      
& $\text{\texttt{error}}(\mathbf{u}, H^1)$   
& $\text{\texttt{error}}(p, L^2)$       
& $\text{\texttt{error}}(\psi, H^2)$        
& $\text{\texttt{error}}(p, L^2)$ 
\\
\midrule
\multirow{4}*{$\mathcal{V}_h$}                             
&\texttt{1/8}        
&\texttt{3.704032467e-1}                      
&\texttt{3.891840615e-1}                             
&\texttt{3.704032467e-1}  
&\texttt{3.891840615e-1} 
\\
&\texttt{1/16}       
&\texttt{9.153568669e-2}                      
&\texttt{8.875084726e-2}                             
&\texttt{9.153568669e-2}  
&\texttt{8.875084726e-2} 
\\
&\texttt{1/32}       
&\texttt{2.308710367e-2}
&\texttt{1.994452869e-2}                             
&\texttt{2.308710367e-2}  
&\texttt{1.994452869e-2} 
\\
&\texttt{1/64}       
&\texttt{5.791512013e-3}                       
&\texttt{4.602515029e-3}                           
&\texttt{5.791512013e-3}  
&\texttt{4.602515029e-3} 
\\
\midrule
\multirow{4}*{$\mathcal{Q}_h$}                             
&\texttt{1/10}       
&\texttt{3.047752518e-1}                      
&\texttt{3.714633884e-1}                             
&\texttt{3.047752518e-1}  
&\texttt{3.714633884e-1} 
\\
&\texttt{1/20}       
&\texttt{8.709526360e-2}                      
&\texttt{8.363888240e-2}                             
&\texttt{8.709526360e-2}  
&\texttt{8.363888240e-2} 
\\
&\texttt{1/40}       
&\texttt{2.188243443e-2}
&\texttt{1.945853612e-2}                             
&\texttt{2.188243443e-2}  
&\texttt{1.945853612e-2} 
\\
&\texttt{1/80}       
&\texttt{5.523374104e-3}                       
&\texttt{4.762632907e-3}                           
&\texttt{5.523374104e-3}  
&\texttt{4.762632907e-3} 
\\
\bottomrule
\end{tabular}
\caption{Test \ref{test1}.  Errors computed with virtual elements schemes \eqref{eq:ns virtual} and \eqref{eq:vem-curl} for  the meshes $\mathcal{V}_h$
and $\mathcal{Q}_h$.}
\label{tab1}
\end{table}
In Table \ref{tab2} we compare the number of DoFs and the condition number of the resulting linear systems (stemming from the fist iteration of the Newton method) 
for both formulations \eqref{eq:ns virtual}  and \eqref{eq:vem-curl}.
As observed in Section \ref{sec:5}, the scheme \eqref{eq:vem-curl} has the advantage of having $(2\, n_p - 2)$ less of unknowns, even when considering the reduced version (see Remark \ref{rem:reduced}) for formulation \eqref{eq:ns virtual}.
The drawback is that the condition number of the system resulting from the velocity-pressure scheme \eqref{eq:ns virtual} behaves as $h^{-2}$, while the asymptotic rate of the condition number of the linear system resulting from the scheme \eqref{eq:vem-curl} formulation is $h^{-4}$.
\begin{table}[!h]
\centering
\begin{tabular}{ll*{4}{c}}
\toprule
& & \multicolumn{2}{c}{velocity-pressure formulation}&\multicolumn{2}{c}{$\CC$ formulation}\\
\midrule
&                         $h$                      
& $\text{\texttt{n\_DoFs}}$   
& $\text{\texttt{condition number}}$       
& $\text{\texttt{n\_DoFs}}$        
& $\text{\texttt{condition number}}$ 
\\
\midrule
\multirow{4}*{$\mathcal{V}_h$}                             
&\texttt{1/8}        
&\texttt{585}                      
&\texttt{1.274770181e+3}                             
&\texttt{459}  
&\texttt{1.063189235e+5} 
\\
&\texttt{1/16}       
&\texttt{2573}                      
&\texttt{5.052943797e+3}                             
&\texttt{2063}  
&\texttt{7.870747143e+5} 
\\
&\texttt{1/32}       
&\texttt{10757}
&\texttt{2.347797950e+4}                             
&\texttt{8711}  
&\texttt{1.840718952e+7} 
\\
&\texttt{1/64}       
&\texttt{43833}                       
&\texttt{9.919686982e+4}                           
&\texttt{35643}  
&\texttt{3.030371659e+8} 
\\
\midrule
\multirow{4}*{$\mathcal{Q}_h$}                             
&\texttt{1/10}       
&\texttt{621}                      
&\texttt{2.395280050e+3}                             
&\texttt{423}  
&\texttt{9.117317719e+3} 
\\
&\texttt{1/20}       
&\texttt{2641}                      
&\texttt{1.151514288e+4}                             
&\texttt{1843}  
&\texttt{8.438089452e+4} 
\\
&\texttt{1/40}       
&\texttt{10881}
&\texttt{5.217365003e+4}                             
&\texttt{7683}  
&\texttt{1.299512460e+6} 
\\
&\texttt{1/80}       
&\texttt{44161}                       
&\texttt{2.121983675e+5}                           
&\texttt{31363}  
&\texttt{2.079841110e+7} 
\\
\bottomrule
\end{tabular}
\caption{Test \ref{test1}.  Number of DoFs and
condition numbers for the virtual elements schemes
\eqref{eq:ns virtual} and \eqref{eq:vem-curl} for
the meshes $\mathcal{V}_h$
and $\mathcal{Q}_h$.}
\label{tab2}
\end{table}
\end{test}

%%%%%%%%%%%%%%%%%%%%%%%%%%%%%%%%%%%%%%%%%%%%%%%%%%%%%%%%%%%%%%%%%%%
\begin{test}\label{test2}
%%%%%%
In this test, inspired by \cite{benchmark},
we consider a problem  for the Navier--Stokes
with the property of having the velocity and
stream solution in the corresponding  discrete spaces.  
In particular we consider the Navier--Stokes
\eqref{eq:ns stream} on the disk $\Omega_{\rm D}$  
with  viscosity $\nu = 1$ and with
the load 
$\mathbf{f}$ chosen such that the analytical velocity-pressure solution 
and the corresponding stream function solution are respectively 
\begin{gather*}
\mathbf{u}(x,y) = \begin{pmatrix}
x^2 + y^2 \\ -2 \, x y
\end{pmatrix} \qquad 
p(x,y) = x^3 y^3 - \frac{1}{16} \,,
\\
\psi(x, y) = x^2 y + \frac{1}{3} y^3 \,.
\end{gather*}  
We notice that the stream function solution $\psi$ belongs to the discrete spaces $\Phi_h$ and $\widetilde{\Phi}_h$ (cf. \eqref{eq:Phi_h} and \eqref{eq:Phi_ht} respectively) 
and the corresponding velocity $\uu = \CC \, \psi$ belongs to $[\Pk_k(\Omega)]^2 \subset \VV_h$.
In the light of Remark \ref{rm:trilinear}, the methods \eqref{eq:vem-curl} obtained considering the trilinear forms $\cconvh(\cdot; \cdot, \cdot)$ or $\croth(\cdot; \cdot, \cdot)$ provide a higher order approximation error, that is $h^{k+2}$ (instead of $h^k$ as a standard inf-sup stable FEM would do). Note that instead an exactly divergence-free FEM would acquire machine precision velocity error on this test, but this comes at the price of the additional complications of this methods.
Figures \ref{fig:linke-tri} and \ref{fig:linke-vor} plot the results obtained with  the sequence of triangular meshes $\mathcal{T}_h$ and  mapped Voronoi meshes $\mathcal{W}_h$ considering the scheme \eqref{eq:vem-curl} (with all possible choices of the trilinear forms)  and the scheme \eqref{eq:ns stream vem}.
The error for the $C^1$ discretization of the stream formulation \eqref{eq:ns stream vem} provides the same higher order of convergence $h^{k+2}$. The error for scheme \eqref{eq:ns stream vem} in the case of the finest Voronoi mesh is not depicted since the Newton iterations did not converge in that case, probably an indication on the reduced robustness of such scheme when compared to \eqref{eq:vem-curl}.
\begin{figure}[!h]
\centering
\includegraphics[scale=0.3]{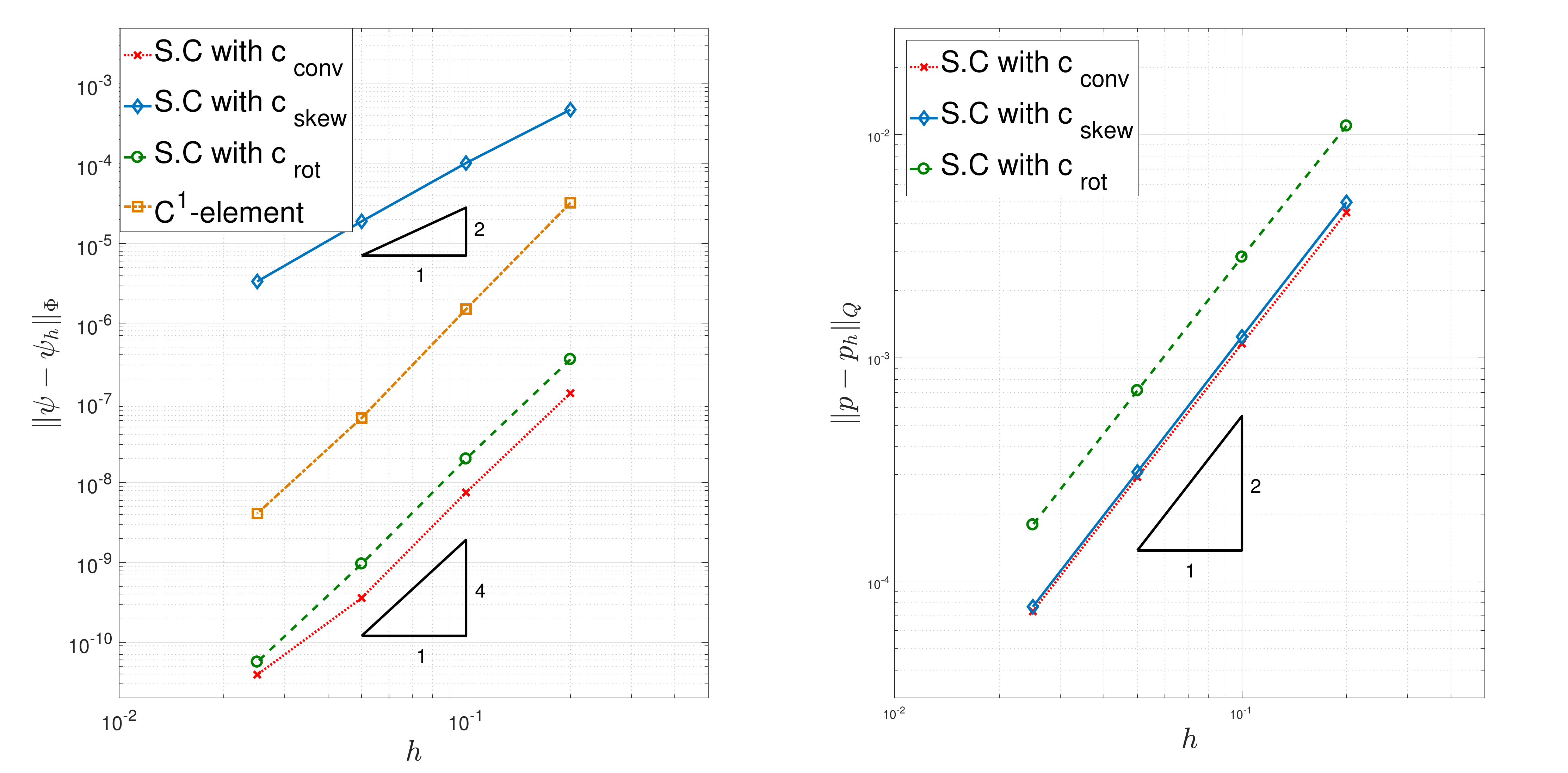} 
\caption{Test \ref{test2}.  Errors computed with the VEM \eqref{eq:vem-curl} and
\eqref{eq:ns stream vem}, meshes $\mathcal{T}_h$.}
\label{fig:linke-tri}
\end{figure}

\begin{figure}[!h]
\centering
\includegraphics[scale=0.3]{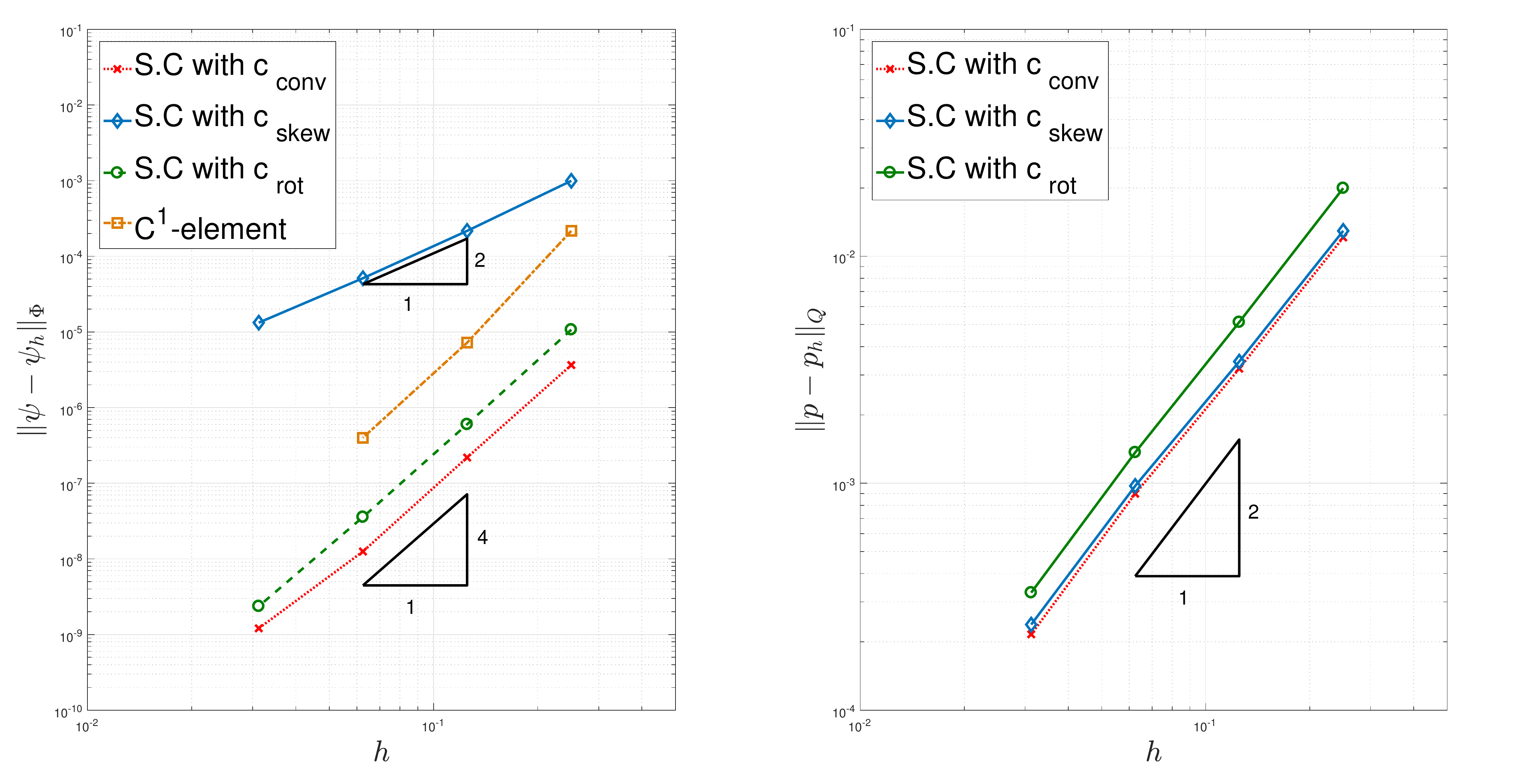} 
\caption{Test \ref{test2}.  Errors computed with the VEM \eqref{eq:vem-curl}
and
\eqref{eq:ns stream vem},  meshes $\mathcal{W}_h$.}
\label{fig:linke-vor}
\end{figure}

\end{test}

%--------------------------------------------------------------------
\section{Appendix: virtual elements stream formulation}
\label{sec:6}
%--------------------------------------------------------------------

In the present section we  briefly present a stream-function formulation approach, that is based on a direct approximation with $C^1$ Virtual Elements \cite{Brezzi-Marini:2012, antonietti-bdv-scacchi-verani:2016} of the continuous stream function formulation of the problem \eqref{eq:ns stream variazionale}. This approach is not associated to a discrete Stokes complex and therefore lacks an equivalent discrete velocity-pressure formulation.

Recalling \eqref{eq:Psi_h}, we consider on each element
$E \in \Omega_h$ the finite dimensional local virtual space
\begin{equation}
\label{eq:Phi_ht}
\widetilde{\Phi}_h^E := \biggl\{  
\phi \in \Psi_h^E  \quad \text{s.t.} \quad
\left( \phi - \Pi^{\nabla^2,E}_k  \phi, \,  \widehat{p}_{k-1} \right)_{E} = 0 \quad 
\text{for all $\widehat{p}_{k-1} \in  \widehat{\Pk}_{k-1 \setminus k-3}(E)$}
\biggr\} \,.
\end{equation}

We here mention only the main properties of the virtual space
$\widetilde{\Phi}_h^E$ and refer to \cite{Brezzi-Marini:2012, antonietti-bdv-scacchi-verani:2016}
for a deeper description:
\begin{itemize}
\item \textbf{dimension:}
it holds $\Pk_{k+1}(E) \subset \widetilde{\Phi}_h^E$ and 
$
\dim\left( \widetilde{\Phi}_h^E \right) = 2n_E k + \frac{(k-1)(k-2)}{2}  
$,
that is the same dimension of $\Phi_h^E$ (cf. \eqref{eq:dimension_Phi});
\item \textbf{degrees of freedom:}
the linear operators $\mathbf{D_{\widetilde{\Phi}}}$:
$\mathbf{D_{\Phi}1}$, $\mathbf{D_{\Phi}2}$, $\mathbf{D_{\Phi}3}$,
$\mathbf{D_{\Phi}4}$, $\widetilde{\mathbf{D_{\Phi}5}}$
(see Remark~\ref{rm:brezzi marini}) constitute a set of
DoFs for $\widetilde{\Phi}_h^E$;
\item \textbf{projections:}
the DoFs $\mathbf{D_{\widetilde{\Phi}}}$ allow us to compute exactly 
(c.f. \eqref{eq:Pn2_k^E} and \eqref{eq:P0_k^E})
\begin{gather*}
{\Pi}_{k+1}^{\nabla^2, E} \colon \widetilde{\Phi}_h^E \to \Pk_{k+1}(E) \,, \qquad
{\Pi}_{k-1}^{0, E} \colon \Delta (\widetilde{\Phi}_h^E) \to \Pk_{k-1}(E) \,, \qquad
{\Pi}_{k-1}^{0, E} \colon \widetilde{\Phi}_h^E \to \Pk_{k-1}(E) \,,
\\
{\Pi}_{k-1}^{0, E} \colon \gr (\widetilde{\Phi}_h^E) \to [\Pk_{k-1}(E)]^2 \,, \qquad
{\Pi}_{k-1}^{0, E} \colon \cc (\widetilde{\Phi}_h^E) \to [\Pk_{k-1}(E)]^2 \,,
\end{gather*}
in the sense that, given any $\phi_h \in \widetilde{\Phi}_h^E$,
we are able to compute the polynomials
${\Pi}_{k+1}^{\nabla^2, E} \phi_h$, 
${\Pi}_{k-1}^{0, E} \Delta \phi_h$,
${\Pi}_{k-1}^{0, E} \phi_h$,
${\Pi}_{k-1}^{0, E}  \gr \phi_h$,
and 
${\Pi}_{k-1}^{0, E}  \cc \phi_h$
only using, as unique information, the degree of
freedom values $\mathbf{D_{\widetilde{\Phi}}}$ of $\phi_h$.
\end{itemize}
The global virtual element space is obtained as usual by combining the local spaces $\Phi_h^E$
accordingly to the local degrees of freedom 
\begin{equation} % 
\label{eq:Phi_global_ht}
\widetilde{\Phi}_h := \{ \phi \in H^2_0(\Omega) \quad \text{s.t} \quad \phi_{|E} \in \widetilde{\Phi}_h^E  \quad \text{for all $E \in \Omega_h$} \} \,.
\end{equation} 
%
%The space $\Phi_h$ has an optimal interpolation order of
%accuracy with respect to $k$, i.e.
%for every smooth enough $\phi\in H^{s+2}(\Omega)$,
%there exists $\phi_I \in \Phi_h$ such that
%\[
%\|\phi - \phi_I\|_{\Phi} \leq C \, h^{s} \, |\phi|_{s+2} \ , \ 0 < s \le k \,.
%\]

Now we briefly describe the
construction of a discrete computable version of the bilinear
form $\widetilde{a}(\cdot, \cdot)$  given in \eqref{eq:forma at}
and trilinear form $\widetilde{c}(\cdot; \cdot, \cdot)$ (cf. \eqref{eq:forma crott}).
First, we decompose into local contributions
$\widetilde{a}(\cdot, \cdot)$ and $\widetilde{c}(\cdot; \cdot, \cdot)$
by considering:
\begin{equation*}
%\label{eq:forma a locali continue}
\widetilde{a} (\psi, \, \phi) =: 
\sum_{E \in \Omega_h} \widetilde{a}^E (\psi, \, \phi)\,, \qquad
\widetilde{c}(\zeta; \, \psi, \phi) =: \sum_{E \in \Omega_h} 
\widetilde{c}^E(\zeta; \, \psi, \phi) \qquad 
\text{for all $\zeta, \psi, \phi \in \Phi$.}
\end{equation*}
Following a standard procedure in the VEM framework,
we define a computable discrete local bilinear form
%
%\begin{equation}
%\label{eq:a_ht^E} 
$\widetilde{a}_h^E(\cdot, \cdot) 
\colon \widetilde{\Phi}_h^E \times \widetilde{\Phi}_h^E \to \R$
%\end{equation}
approximating the continuous form $\widetilde{a}_h^E(\cdot, \cdot) $, and defined by
\begin{equation}
\label{eq:a_ht^E def}
\widetilde{a}_h^E(\psi, \, \phi) := 
\left(\Pi_{k-1}^{0, E} \Delta \psi, \, \Pi_{k-1}^{0, E} \Delta  \phi \right)_E + 
\widetilde{\mathcal{S}}^E \left( \left(I - \PNdue \right) \psi, \, \left(I -\PNdue \right) \phi \right)
\end{equation}
for all $\psi, \phi \in \widetilde{\Phi}_h^E$, where the (symmetric)
stabilizing bilinear form
$\widetilde{\mathcal{S}}^E \colon \widetilde{\Phi}_h^E \times \widetilde{\Phi}_h^E \to \R$,
satisfies the stability condition
\begin{equation}
\label{eq:St^E}
\alpha_* \|\phi\|^2_{\Phi,E} \leq  
\widetilde{\mathcal{S}}^E(\phi, \, \phi) \leq 
\alpha_*  \|\phi\|^2_{\Phi,E} \qquad \text{for all $\phi \in \widetilde{\Phi}_h^E$ such that $\PNdue \phi = 0$}
\end{equation}
with $\alpha_*$ and $\alpha^*$  positive  constants independent of the element $E$.
We notice that the bilinear form  $\widetilde{a}_h^E(\cdot, \, \cdot)$
is not coercive, since its kernel consists of all harmonic polynomial functions.
For what concerns the approximation of the local trilinear form $\widetilde{c}^E(\cdot; \, \cdot, \cdot)$, we simply set
\begin{equation}
\label{eq:c_ht^E}
\widetilde{c}_h^E(\zeta_h; \, \psi_h, \, \phi_h) := 
\int_E \left[ \left(\Pi^{0, E}_{k-1} \, \Delta \zeta_h  \right)  \left(\Pi_{k}^{0, E} \, \CC \psi_h \right) \right] \cdot \left(\Pi^{0, E}_{k} \gr \psi_h \right) \, {\rm d}E 
\end{equation}
for all $\zeta_h, \psi_h, \phi_h \in \Phi_h$, which is computable by the DoFs $\mathbf{D_{\widetilde{\Phi}}}$.

We define the global approximated forms $\widetilde{a}_h(\cdot, \cdot)$ 
and $\widetilde{c}_h(\cdot; \, \cdot, \cdot)$
 by simply summing the local contributions:
\begin{equation}
\label{eq:global_ht}
\widetilde{a}_h(\psi_h, \, \phi_h) := 
\sum_{E \in \Omega_h}  \widetilde{a}_h^E(\psi_h, \, \phi_h) \, \qquad 
\widetilde{c}_h(\zeta_h; \, \psi_h, \, \phi_h) := 
\sum_{E \in \Omega_h}  \widetilde{c}_h^E(\zeta_h; \, \psi_h, \, \phi_h)\,, 
\end{equation}
for all $\zeta_h, \psi_h, \phi_h \in \Phi_h$. 
Finally the right hand side is computed as explored in \eqref{eq:right new}.

Referring to~\eqref{eq:Phi_global_ht}, ~\eqref{eq:global_ht} and \eqref{eq:right new}, the virtual element stream formulation of the Navier--Stokes equation is given by
\begin{equation}
\label{eq:ns stream vem}
\left\{
\begin{aligned}
& \mbox{ find $\psi_h \in \widetilde{\Phi}_h$ such that}& &\\
&   \nu \, \widetilde{a_h}(\psi_h, \, \phi_h) 
\, +  \, 
\widetilde{c_h}(\psi_h; \, \psi_h, \,  \phi_h)  = ((\cc \, \ff)_h, \, \phi_h) 
\qquad & &\text{for all  $\phi_h \in \widetilde{\Phi}_h$.}
\end{aligned}
\right.
\end{equation}

\begin{remark}\label{rem:C1linke}
Note that the $C^1$ discretization of Navier--Stokes~\eqref{eq:ns stream variazionale}
does not allow to reconstruct an approximation of the continuous pressure
$p$ analogous to that given in \eqref{eq:pressure}.
Indeed the core idea behind \eqref{eq:pressure} is to exploit the
Stokes complex associated to $\Phi_h$ and $\VV_h$ (cf. \eqref{eq:Phi_h_global}
and \eqref{eq:V_h}, respectively).
\end{remark}

\section*{Acknowledgements}
The authors L. Beir\~ao da Veiga and G. Vacca were
partially supported by the European Research Council through
the H2020 Consolidator Grant (grant no. 681162) CAVE,
Challenges and Advancements in Virtual Elements.
This support is gratefully acknowledged.
The author D. Mora was partially supported by
CONICYT-Chile through project FONDECYT 1180913 and
project AFB170001 of the PIA Program: Concurso Apoyo
a Centros Cient\'ificos y Tecnol\'ogicos de Excelencia con
Financiamiento Basal.

\addcontentsline{toc}{section}{\refname}
\bibliographystyle{plain}
\bibliography{biblio}

\begin{thebibliography}{10}

\bibitem{Adams:1975}
R.~A. Adams.
\newblock {\em Sobolev spaces}, volume~65 of {\em Pure and Applied
  Mathematics}.
\newblock Academic Press, New York-London, 1975.

\bibitem{Ahmed-et-al:2013}
B.~Ahmad, A.~Alsaedi, F.~Brezzi, L.~D. Marini, and A.~Russo.
\newblock Equivalent projectors for virtual element methods.
\newblock {\em Comput. Math. Appl.}, 66(3):376--391, 2013.

\bibitem{Antonietti-BeiraodaVeiga-Mora-Verani:20XX}
P.~F. Antonietti, L.~Beir{\~a}o~da Veiga, D.~Mora, and M.~Verani.
\newblock A stream function formulation of the {S}tokes problem for the virtual
  element method.
\newblock {\em SIAM J. Numer. Anal.}, 52(1):386--404, 2014.

\bibitem{antonietti-bdv-scacchi-verani:2016}
P.~F. Antonietti, L.~Beir{\~a}o~da Veiga, S.~Scacchi, and M.~Verani.
\newblock A $c^1$ {V}irtual {E}lement {M}ethod for the {C}ahn--{H}illiard
  {E}quation with {P}olygonal {M}eshes.
\newblock {\em SIAM J. Numer. Anal.}, 54(1):34--56, 2016.

\bibitem{Antonietti-et-al:2017}
P.~F. Antonietti, P.~Houston, X.~Hu, M~Sarti, and M.~Verani.
\newblock Multigrid algorithms for $hp$-version interior penalty discontinuous
  {G}alerkin methods on polygonal and polyhedral meshes.
\newblock {\em Calcolo}, 54(4):1169--1198, 2017.

\bibitem{Artioli-Sacco}
E.~Artioli, Marfia S., and E.~Sacco.
\newblock High-order virtual element method for the homogenization of long
  fiber nonlinear composites, 2018.
\newblock {I}n press on {C}omp. {M}eth. {A}ppl. {M}ech. {E}ngrg., doi
  10.1016/j.cma.2018.07.012.

\bibitem{volley}
L.~Beir{\~a}o~da Veiga, F.~Brezzi, A.~Cangiani, G.~Manzini, L.~D. Marini, and
  A.~Russo.
\newblock Basic principles of virtual element methods.
\newblock {\em Math. Models Methods Appl. Sci.}, 23(1):199--214, 2013.

\bibitem{VEM-hitchhikers}
L.~Beir{\~a}o~da Veiga, F.~Brezzi, L.~D. Marini, and A.~Russo.
\newblock The {H}itchhiker's {G}uide to the {V}irtual {E}lement {M}ethod.
\newblock {\em Math. Models Methods Appl. Sci.}, 24(8):1541--1573, 2014.

\bibitem{2016stability}
L.~Beir{\~a}o~da Veiga, C.~Lovadina, and A.~Russo.
\newblock Stability analysis for the virtual element method.
\newblock {\em Math. Mod.and Meth. in Appl. Sci.}, 27(13):2557--2594, 2017.

\bibitem{Stokes:divfree}
L.~Beir{\~a}o~da Veiga, C.~Lovadina, and G.~Vacca.
\newblock Divergence free virtual elements for the {S}tokes problem on
  polygonal meshes.
\newblock {\em ESAIM Math. Model. Numer. Anal.}, 51(2):509--535, 2017.

\bibitem{Bdv-Lovadina-Vacca:2018}
L.~Beir{\~a}o~da Veiga, C.~Lovadina, and G.~Vacca.
\newblock Virtual elements for the {N}avier--{S}tokes problem on polygonal
  meshes.
\newblock {\em SIAM J. Numer. Anal.}, 56(3):1210--1242, 2018.

\bibitem{Benedetto-VEM-2}
M.~F. Benedetto, S.~Berrone, A.~Borio, S.~Pieraccini, and S.~Scial{\`o}.
\newblock A hybrid mortar virtual element method for discrete fracture network
  simulations.
\newblock {\em J. Comput. Phys.}, 306:148--166, 2016.

\bibitem{Berrone-VEM}
M.F. Benedetto, S.~Berrone, S.~Pieraccini, and S.~Scial\`o.
\newblock The {V}irtual {E}lement {M}ethod for {D}iscrete {F}racture {N}etwork
  simulations.
\newblock {\em Comput. Methods Appl. Mech. Engrg.}, 280(1):135--156, 2014.

\bibitem{Bertoluzza}
S.~Bertoluzza, M.~Pennacchio, and D.~Prada.
\newblock {BDDC} and {FETI}-{DP} for the virtual element method.
\newblock {\em Calcolo}, 54(4):1565--1593, 2017.

\bibitem{botti-dipetro-droniou:2018}
L.~Botti, D.~A. Di~Pietro, and J.~Droniou.
\newblock A {H}ybrid {H}igh-{O}rder discretisation of the {B}rinkman problem
  robust in the {D}arcy and {S}tokes limits.
\newblock {\em {C}omp. {M}eth. {A}ppl. {M}ech. {E}ngrg.}, 341:278--310, 2018.

\bibitem{brenner:2017}
S.~C. Brenner, Q.~Guan, and L.~Y. Sung.
\newblock Some {E}stimates for {V}irtual {E}lement {M}ethods.
\newblock {\em Comput. Methods Appl. Math.}, 17(4):553--574, 2017.

\bibitem{brennerscott}
S.~C. Brenner and L.~R. Scott.
\newblock {\em The Mathematical Theory of Finite Element Methods}, volume~15 of
  {\em Texts in Applied Mathematics}.
\newblock Springer, New York, third edition, 2008.

\bibitem{Brezzi-Marini:2012}
F.~Brezzi and L.D. Marini.
\newblock Virtual {E}lement {M}ethod for plate bending problems.
\newblock {\em Comput. Methods Appl. Mech. Engrg.}, 253:455--462, 2012.

\bibitem{Gatica-1}
E.~C\'aceres and G.N. Gatica.
\newblock A mixed virtual element method for the pseudostress-velocity
  formulation of the {S}tokes problem.
\newblock {\em IMA J. of Numer. Anal.}, 37(1):296--331, 2017.

\bibitem{cangiani-georgoulis-houston:2014}
A.~Cangiani, H.~G. Georgoulis, and P.~Houston.
\newblock $hp$-{V}ersion discontinuous {G}alerkin methods on polygonal and
  polyhedral meshes.
\newblock {\em Math. Models Methods Appl. Sci.}, 24(10):2009--2041, 2014.

\bibitem{Stokes:nonconforme}
A.~Cangiani, V.~Gyrya, and G.~Manzini.
\newblock The {N}on {C}onforming {V}irtual {E}lement {M}ethod for the {S}tokes
  {E}quations.
\newblock {\em SIAM J. Numer. Anal.}, 54(6):3411--3435, 2016.

\bibitem{hourglass}
A.~Cangiani, G.~Manzini, A.~Russo, and N.~Sukumar.
\newblock Hourglass stabilization and the virtual element method.
\newblock {\em Int. J. Numer. Meth. Engng.}, 102(3-4):404--436, 2015.

\bibitem{ciarlet:1978}
P.G. Ciarlet.
\newblock {\em The Finite Element Method for Elliptic Problems}, volume~4.
\newblock Stud. Math. Appl., North-Holland, Amsterdam, Netherlands, 1978.

\bibitem{ill_cond_VEM3D}
F.~Dassi and L.~Mascotto.
\newblock Exploring high-order three dimensional virtual elements: bases and
  stabilizations.
\newblock {\em Comput. Math. Appl.}, 75(9):3379--3401, 2018.

\bibitem{dipietro-krell:2018}
D.~A. Di~Pietro and S.~Krell.
\newblock A {H}ybrid {H}igh-{O}rder {M}ethod for the {S}teady {I}ncompressible
  {N}avier-{S}tokes {P}roblem.
\newblock {\em J. Comput. Phys.}, 74(3):1677--1705, 2018.

\bibitem{falk-neilan:2013}
R.~S. Falk and M.~Neilan.
\newblock Stokes complexes and the construction of stable finite elements with
  pointwise mass conservation.
\newblock {\em SIAM J. Numer. Anal.}, 51(2):1308--1326, 2013.

\bibitem{floater-gillette-sukumar:2014}
M.~Floater, A.~Gillette, and N.~Sukumar.
\newblock Gradient {B}ounds for {W}achspress {C}oordinates on {P}olytopes.
\newblock {\em SIAM J. Numer. Anal.}, 52(1):515--532, 2014.

\bibitem{fumagalli}
A.~Fumagalli and Keilegavlen E.
\newblock Dual {V}irtual {E}lement {M}ethod for {D}iscrete {F}ractures
  {N}etworks.
\newblock {\em SIAM J. Sci. Comp.}, 40(1):B228--B258, 2018.

\bibitem{GTP14}
A.~L. Gain, C.~Talischi, and G.~H. Paulino.
\newblock On the virtual element method for three-dimensional linear elasticity
  problems on arbitrary polyhedral meshes.
\newblock {\em Comput. Methods Appl. Mech. Engrg.}, 282:132--160, 2014.

\bibitem{Gatica-2}
G.N. Gatica, M.~Munar, and F.~A. Sequeira.
\newblock A mixed virtual element method for the {N}avier-{S}tokes equations.
\newblock 2017.
\newblock Pre-Print CI2MA 2017-32, Universidad de Concepci\'on, Chile.

\bibitem{giraultbook}
V.~Girault and P.~A. Raviart.
\newblock {\em Finite Element Methods for {N}avier-{S}tokes Equations},
  volume~5 of {\em Springer Series in Computational Mathematics}.
\newblock Springer-Verlag, Berlin, 1986.
\newblock Theory and algorithms.

\bibitem{guzman-neilan:2014}
J.~Guzm\'an and M.~Neilan.
\newblock Conforming and divergence-free {S}tokes elements on general
  triangular meshes.
\newblock {\em Math. Comp.}, 83(285):15--36, 2014.

\bibitem{Linke3}
V.~John, A.~Linke, C.~Merdon, M.~Neilan, and L.~G. Rebholz.
\newblock On the divergence constraint in mixed finite element methods for
  incompressible flows.
\newblock {\em SIAM Rev.}, 59(3):492--544, 2017.

\bibitem{Linke2}
P.~L. Lederer, A.~Linke, C.~Merdon, and J.~Sch\"oberl.
\newblock Divergence-free reconstruction operators for pressure-robust {S}tokes
  discretizations with continuous pressure finite elements.
\newblock {\em SIAM J. Numer. Anal.}, 55(3):1291--1314, 2017.

\bibitem{benchmark}
A.~Linke and C.~Merdon.
\newblock On velocity errors due to irrotational forces in the
  {N}avier-{S}tokes momentum balance.
\newblock {\em J. Comput. Phys.}, 313:654--661, 2016.

\bibitem{Linke1}
A.~Linke and C.~Merdon.
\newblock Pressure-robustness and discrete {H}elmholtz projectors in mixed
  finite element methods for the incompressible {N}avier-{S}tokes equations.
\newblock {\em Comput. Methods Appl. Mech. Engrg.}, 311:304--326, 2016.

\bibitem{Cinese-nonconforme}
X.~Liu and Z.~Chen.
\newblock The nonconforming virtual element method for the {N}avier-{S}tokes
  equations.
\newblock 2018.
\newblock {I}n press on Adv. Comput. Math., doi 10.1007/s10444-018-9602-z.

\bibitem{Mora-Rivera-Rodolfo:2015}
D.~Mora, G.~Rivera, and R.~Rodr\'\i~guez.
\newblock A virtual element method for the {S}teklov eigenvalue problem.
\newblock {\em Math. Models Methods Appl. Sci.}, 25(8):1421--1445, 2015.

\bibitem{Mora-Rivera-Velazquez:2018}
D.~Mora, G.~Rivera, and I.~Vel\'asquez.
\newblock A virtual element method for the vibration problem of {K}irchhoff
  plates, 2018.
\newblock {I}n press on ESAIM Math. Model. Numer. Anal., doi
  10.1051/m2an/2017041.

\bibitem{wriggers-2}
V.~M. Nguyen-Thanh, H.~Zhuang, X. Nguyen-Xuan, T.~Rabczuk, and P.~Wriggers.
\newblock A {V}irtual {E}lement {M}ethod for 2{D} linear elastic fracture
  analysis.
\newblock {\em Comput. Methods Appl. Mech. Engrg.}, 340:366--395, 2018.

\bibitem{talischi.paulino-pereira-menezes:2010}
C.~Talischi, G.~H. Paulino, A.~Pereira, and I.~F.M~. Menezes.
\newblock Polygonal finite elements for topology optimization: {A} unifying
  paradigm.
\newblock {\em Int. J. Numer. Meth. Engng}, 82(6):671--698, 2010.

\bibitem{TPPM12}
C.~Talischi, G.~H. Paulino, A.~Pereira, and I.~F.M~. Menezes.
\newblock Polymesher: a general-purpose mesh generator for polygonal elements
  written in matlab.
\newblock {\em Struct. Multidisc Optimiz.}, 45(3):309--328, 2012.

\bibitem{vaccahyper}
G.~Vacca.
\newblock Virtual element methods for hyperbolic problems on polygonal meshes.
\newblock {\em Comput. Math. Appl.}, 74(5):882--898, 2017.

\bibitem{vacca:2018}
G.~Vacca.
\newblock An ${H}^1$-conforming {V}irtual {E}lement for {D}arcy and {B}rinkman
  equations.
\newblock {\em Math. Models Methods Appl. Sci.}, 28(1):159--194, 2018.

\bibitem{wriggers}
P.~Wriggers, W.T. Rust, and B.D. Reddy.
\newblock A virtual element method for contact.
\newblock {\em Comput. Mech.}, 58(6):1039--1050, 2016.

\end{thebibliography}

%%%%%%%%%%%%%%%%%%%%%%%%%%%%%%%%%%%%%%%%%%%%%%%%%%%%%%%%%%%%%%%%%%%%%%%
\end{document}